\newcommand{\eps}{\varepsilon}
\newcommand{\R}{\mathbb{R}}
\renewcommand{\le }{\leqslant }
\renewcommand{\ge }{\geqslant }
\newtheorem{theorem}{Theorem}[section]
\newtheorem{lemma}[theorem]{Lemma}
\newtheorem{definition}[theorem]{Definition}
\newtheorem{proposition}[theorem]{Proposition}
\newtheorem{remark}[theorem]{Remark}
\newtheorem{corollary}[theorem]{Corollary}
\renewenvironment{proof}[1][Proof.]{\noindent\textbf{#1} }{$\hfill\square$\vspace{0.2 cm}\\}
\numberwithin{equation}{section}
\begin{document}

\title{{\bf{ Cracks with impedance, stable determination from boundary data} \thanks{Work supported in part by MIUR, PRIN 20089PWTPS. E.S. wishes to thank  the Istituto Nazionale di Alta Matematica (INdAM) for partly supporting her work by a research grant. Part of this work was done while the authors were attending the 2011 Programme Inverse Problems at the Isaac Newton Institute. The hospitality of the Institute is gratefully acknowledged.}}}
\author{Giovanni Alessandrini\footnote{Universit\`a degli Studi di Trieste, Italy, e-mail: \text {alessang@units.it}}  \and
Eva Sincich\footnote{Universit\`a degli Studi di Trieste, Italy,  e-mail: \text{esincich@units.it}} 
}

\date{}

\maketitle

\begin{abstract}
\noindent
We discuss the inverse problem of determining the possible presence of an $(n-1)$-dimensional crack $\Sigma$ in an $n$-dimensional body $\Omega$ with $n\ge 3$ when the so-called Dirichlet-to-Neumann map is given on the boundary of $\Omega$. In combination with quantitative unique continuation techniques,  an optimal single-logarithm stability estimate is proven by using the singular solutions method. Our arguments also apply when the Neumann-to-Dirichlet map or the local versions of the D-N and the N-D map are available.

\end{abstract}

{\small{\bf Keywords}: inverse crack problem, impedance boundary condition, stability. \ }

{\small{\bf 2000 Mathematics Subject Classification }: 35R30, 35R25,
31B20 .}

\begin{section}{Introduction}

Consider an homogeneous electrically conducting body $\Omega\subset\mathbb{R}^n$ which might contain an unknown inaccessible crack represented by an $(n-1)-$dimensional orientable surface with boundary $\Sigma \subset \subset \Omega$. Electrostatic equilibrium can be modeled by 

\begin{equation}\label{C}
\left\{
\begin{array}
{lcl}
\Delta u=0\ ,&& \mbox{in $\Omega\setminus\overline{\Sigma}$ ,}
\\
\nabla u^{\pm}\cdot{\nu^{\pm}} - \gamma^{\pm} u^{\pm} =0 \ , && \mbox{on either side of $\Sigma$ ,}
\\
 u= \varphi\ ,   && \mbox{on $\partial\Omega$ .}
\end{array}
\right.
\end{equation}
Here $u$ denotes the electrostatic potential, $\varphi$ denotes the prescribed potential distribution on the exterior boundary $\partial \Omega$. The Robin type boundary condition on $\Sigma$ has to be interpreted as follows. 

Having chosen one arbitrary orientation for the normal unit field $\nu$ on $\Sigma$ we distinguish by the $\pm$ sign the boundary values (or traces) of $u$ and its derivatives on the two sides of $\Sigma$ and we denote by $\nu^+, \nu^-$ the normals to $\Sigma$ pointing to the $+, -$ side of $\Omega\setminus\overline{\Sigma}$ respectively. 
The impedance coefficients $\gamma^+,\gamma^-$ on the two side are assumed to be non-negative. 

We consider the inverse problem of determining $\Sigma$ from boundary current density measurements $\partial_{\nu}u$ corresponding to one or more choices of the prescribed boundary potential $\varphi$.

As is well-known, since Friedman and Vogelius \cite{FV}, at least two measurements are necessary and in fact, in the two-dimensional setting, it is by now clear how two suitable boundary measurements can be chosen in order to have uniqueness and stability, \cite{A1,A2,ADV,ARo,KSe,Ro1,Ro2}. See also Bryan and Vogelius \cite{BV} for a thorough review and bibliography data. 

When the space dimension $n$ is three, or higher, uniqueness with finitely many measurements is known in few cases, we recall the paper by DiBenedetto and the first author \cite{ADiB}. Instead, uniqueness is known when full boundary data are available Eller \cite{E}, that is when the Dirichlet to Neumann map $\Lambda: \varphi \mapsto \partial_{\nu} u|_{\partial \Omega}$ is known. 

The aim of this paper is to continue the study of the $n$-dimensional problem, $n\ge 3$, initiated by DiBenedetto and A. and Eller, treating the stability issue when a full set of boundary data are available. We shall prove under some a priori regularity assumptions on the crack $\Sigma$, that the crack depends continuously upon the Dirichlet to Neumann map with a modulus of continuity of logarithmic type, with a single log. 

Note that such a modulus of continuity is in fact optimal in view of the several examples in \cite{DCRo}. Our approach is based on the use of singular solutions. This method can be traced back to Isakov \cite{Is0} and it appears also in \cite{E} in Eller's uniqueness proof. The use of singular solutions for stability estimates is by now well-established, \cite{A0,ADC,AGa1,AGa2,AKi,AVe,DC1,DC2,Sa}.
However the crack problem at hand displays several additional difficulties which have required a completely novel approach at various crucial steps.

Let us describe here the main steps of our proof with their specific difficulties.

\vskip 0.3cm 
 {\bf{First step}}
\vskip 0.3cm
For two cracks $\Sigma_1, \Sigma_2$ we consider the corresponding Dirichlet to Neumann maps $\Lambda_1, \Lambda_2$. We shall establish an identity which relates $\Lambda_1 -\Lambda_2$ with integrals on $\Sigma_1\cup\Sigma_2$ involving jumps of the corresponding potentials $u_1,u_2$ and their normal derivatives (Theorem \ref{intbyparts} below),
\begin{eqnarray}\label{identita}
&&\int_{\partial\Omega}(\Lambda_1 -\Lambda_2)u_1 u_2 d\sigma= \int_{\Sigma_1\setminus\Sigma_2}(u_2[\partial_{\nu_1}u_1]_1 - [u_1]_1\partial_{\nu_1} u_2) d\sigma +\\
&& + \int_{\Sigma_2\setminus\Sigma_1} ([u_2]_2\partial_{\nu_2}u_1- u_1[\partial_{\nu_2}u_2]_2)d\sigma + \int_{\Sigma_1\cap\Sigma_2}([u_2\partial_{\nu_1}u_1]_1 - [u_1\partial_{\nu_2}u_2]_2) d\sigma , \nonumber
\end{eqnarray} 
here $u_1$ and $u_2$ are solutions to \eqref{C} when $\Sigma=\Sigma_1=\Sigma_2$ respectively and $[\cdot]_1,[\cdot]_2$ denote jumps across $\Sigma_1,\Sigma_2$ respectively. We refer to \eqref{jump1} and \eqref{jump2} for precise definitions and to Section \ref{sectionintbyparts} for a proof. 

This identity can be viewed as the analogue for the crack problem of the so-called ``Alessandrini identity" \cite{A-1,Is1} for the Calder\'{o}n problem. However, in this case, its derivation is somewhat intricate due to the fact that the common domain of definition of $u_1$ and $u_2$ is $\Omega\setminus (\Sigma_1\cup\Sigma_2)$. Such a set, despite the regularity of $\Sigma_1$ and $\Sigma_2$, might be rather wild, thus integration by parts becomes a delicate matter, which involves also a preliminary study of the regularity of the potentials $u_1,u_2$ and the evaluation of the possible singular behavior of their gradients near the crack edges $\partial \Sigma_1, \partial \Sigma_2$, (see Theorem \ref{rego} below).   

\vskip 0.3cm 
 {\bf{Second step}}
\vskip 0.3cm
We apply the above identity to singular solutions $u_1(\cdot)=R_1(\cdot,y)$ and $u_2(\cdot)=R_2(\cdot,w)$ defined on a larger domain and each having a Green's type singularity at points $y,w$ placed outside of $\Omega$. 

Looking at the right hand side of the identity \eqref{identita} we introduce the function 
\begin{eqnarray}\label{effe}
f(y,w)&=&\int_{\Sigma_1\setminus\Sigma_2}(R_2(\cdot,y)[\partial_{\nu_1}R_1(\cdot,w)]_1 - [R_1(\cdot,y)]_1\partial_{\nu_1} R_2(\cdot,w)) d\sigma+\ \ \ \nonumber\\
&&+\int_{\Sigma_2\setminus\Sigma_1} ([R_2(\cdot,w)]_2\partial_{\nu_2}R_1(\cdot,y)- R_1(\cdot,y)[\partial_{\nu_2}R_2(\cdot,w)]_2)d\sigma+\ \ \  \nonumber\\
&&+\int_{\Sigma_1\cap\Sigma_2}([R_2(\cdot,w)\partial_{\nu_1}R_1(\cdot,y)]_1 - [R_1(\cdot,y)\partial_{\nu_2}R_2(\cdot,w)]_2) d\sigma .\ \ \ \ 
\end{eqnarray}
Note that $f$ is harmonic in $\Omega\setminus (\Sigma_1\cup\Sigma_2)$ separately in each variable $y,w$. Moreover $f$ is controlled in terms of $\Lambda_1 -\Lambda_2$ when $y,w$ are outside $\Omega$. 

Next, by estimates of propagation of smallness for harmonic functions we are able to bound $f(y,w)$ when $y=w$ approaches points of $\Sigma_1 \triangle \Sigma_2$ (the symmetric difference). Here the technical obstruction come from the fact that propagation of smallness can be performed only on connected sets, whereas $\Omega\setminus (\Sigma_1\cup\Sigma_2)$ may be not. Moreover, not all points of $\Sigma_1 \triangle \Sigma_2$ may be reachable from the exterior of $\Omega$ and the estimates of propagation of smallness require that points be reachable in a quantitative form which involves the use of chains of balls  whose numbers is suitably bounded and their radii have to be bounded from below. Such requirements induce the introduction of sets $V_l$ of points which can be suitably reached from the exterior of $\Omega$ (see definition \eqref{Vl}) and an ad-hoc definition of a variation of the Hausdorff distance for closed sets which we call $l-$distance. 

The crucial point here is that under the a priori regularity assumptions on $\Sigma_1,\Sigma_2$ we can show that the Hausdorff distance is dominated by the respective $l-$distance (see Proposition \ref{equiv}).

\vskip 0.3cm 
 {\bf{Third step}}
\vskip 0.3cm
We show that as $y=w$ tends to a point of $\Sigma_1\triangle\Sigma_2$ then $f(y,y)$ blows up. The combination of such a blow up bound and the estimate of $f(y,y)$ in terms of $\Lambda_1 - \Lambda_2$ obtained in the previous step lead to the logarithmic estimate of $d_{H}(\sigma_1, \Sigma_2)$ in terms of $\|\Lambda_1-\Lambda_2\|$. The blow up estimate of this step requires a careful investigation of the asymptotic behavior of the singular solutions $R_i(\cdot, y)$ as their pole $y$ approaches to the crack $\Sigma_i, i=1,2$ (see Proposition \ref{stimeasintotiche} and Proposition \ref{holder}).

\end{section}
\begin{section}{The main results}\label{assumptions}
\begin{subsection}{Notation and definitions}
\end{subsection}
In the sequel, we shall make a repeated use of quantitative notions of smoothness for the boundary of the domain $\Omega$ and for the crack $\Sigma$. Let us introduce the following notation and definitions.   

In several places it will be useful to single out one coordinate direction, to this purpose, we shall use the following notions for points $x\in \mathbb{R}^n, x'\in\mathbb{R}^{n-1}, x''\in\mathbb{R}^{n-2}, n\ge 3, x=(x',x_n), x'=(x'',x_{n-1}), x''= (x''', x_{n-2})$, with $x'\in \mathbb{R}^{n-1}, x''\in \mathbb{R}^{n-2}, x'''\in \mathbb{R}^{n-3}$ and $x_n, x_{n-1}, x_{n-2}\in \mathbb{R}.$ Moreover, given a point $x\in \mathbb{R}^n$, we shall denote with $B_r(x), B_r'(x), B_r^{''}(x), B_r^{'''}(x)$ the ball in $\mathbb{R}^{n},\mathbb{R}^{n-1},\mathbb{R}^{n-2}, \mathbb{R}^{n-3}$ respectively centered in $x$ with radius $r$.
 \begin{definition}
Let $\Omega$ be a domain in $\mathbb R^n$. 
We say that $\partial\Omega$
is of class $C^{0,1}$ with constants $r_0,M$ if for any $P\in\partial\Omega$ there exists a rigid transformation
of $\mathbb R^n$ under which we have $P\equiv0$ and
$$\Omega\cap B_{r_0}=\{x\in B_{r_0}\,:\,x_n>\varphi(x')\},$$
where $\varphi$ is a $C^{0,1}$ function on $B'_{r_0}$ satisfying the following condition
$\varphi(0)=|\nabla_{x'}\varphi(0)|=0$ and $\|\varphi\|_{C^{0,1}(B'_{r_0})}\leq Mr_0$, 
where we denote
\begin{eqnarray}\label{chap2:8}
\|\varphi\|_{C^{0, 1}( B^{'}_{r_0})}&=&\|\varphi\|_{L^{\infty}( B^{'}_{r_0})}+ {r_0}\sup_{\substack {x,y  \in B^{'}_{r_0}\\x\ne y }}\frac{|\varphi (x)-\varphi (y)|}{|x-y|}\  .\nonumber
\end{eqnarray}

\end{definition}
\begin{definition}
Given $ \alpha,\ 0<\alpha\le 1$, we shall say that an hypersurface $S$ is of \emph{class $C^{1,\alpha}$ with constants} $r_0,\ M>0$ 
if for any $P \in S$, there exists a rigid transformation of coordinates under which we have $P=0$ and 
\begin{eqnarray}\label{chap2:1}
 S \cap B_{r_0}=\{(x',x_n)\in B_{r_0}: x_n=\varphi(x')\}\ 
\end{eqnarray}
where
\begin{eqnarray}\label{chap2:2}
\varphi:B^{'}_{r_0}\subset \R^{n-1} \rightarrow \mathbb{R} 
\end{eqnarray}
 is a  $C^{1,\alpha}$ function satisfying 
\begin{eqnarray}\label{chap2:25}
|\varphi(0)|=|\nabla \varphi(0)|=0\ \ \mbox{and}\ \ \|\varphi\|_{C^{1,\alpha}(B^{'}_{r_0})}\le Mr_0\ ,
\end{eqnarray}
where we denote
\begin{eqnarray}\label{chap2:3}
\|\varphi\|_{C^{1, \alpha}( B^{'}_{r_0})}&=&\|\varphi\|_{L^{\infty}( B^{'}_{r_0})}+ r_0\|\nabla\varphi\|_{L^{\infty}( B^{'}_{r_0})} + \\
&+&{r_0}^{1+\alpha}\sup_{\substack {x,y  \in B^{'}_{r_0}\\x\ne y }}\frac{|\nabla\varphi (x)-\nabla \varphi (y)|}{|x-y|^{\alpha}}\  .\nonumber
\end{eqnarray}
\end{definition}


We introduce some notations that we shall use in the sequel.

For any $0<r<r_0$ and any $0<r_1<r_2<r_0$ we shall denote
\begin{eqnarray}
&&\Sigma_{r}=\{x\in \Sigma : \mbox{dist}(x, \partial \Sigma)>r \} \ , \\
&&E_{r}=\{x\in \mathbb{R}^n: \mbox{dist}(x,\Sigma)>r \}\ ,\\
&&\mathcal{U}^i_{r}= \{x\in \mathbb{R}^n \ : \ \mbox{dist}(x,\partial \Sigma_i)<r \}\ , \\
&&\Omega_r=\{ x\in \Omega^c \ : \ \mbox{dist}(x,\Omega)\le r  \}\ ,\\
&&\mathcal{S}_{r_1,r_2}=\{ x\in \mathbb{R}^n : r_1\le \mbox{dist}(x,\partial \Omega)\le r_2 \}\ ,\\
&&\Gamma_r=\{x \in \mathbb{R}^n: \mbox{dist}(x,\Omega)=r \} .
\end{eqnarray}

\begin{subsection}{The D-N map}

We begin by defining the Dirichlet to Neumann map.

For any $\varphi\in H^{\frac{1}{2}}(\partial\Omega)$, the unique weak solution to the mixed Dirichlet-Robin type problem 

\begin{equation}\label{C1}
\left\{
\begin{array}
{lcl}
\Delta u=0\ ,&& \mbox{in $\Omega\setminus\overline{\Sigma}$ ,}
\\
 u= \varphi\ ,   && \mbox{in the trace sense on $\partial\Omega$ ,}
\\
\partial_{\nu^{\pm}}u^{\pm} - \gamma^{\pm} u^{\pm}=0 \ , && \mbox{on $\Sigma^{\pm}$ ,}
\end{array}
\right.
\end{equation}
is given as the unique minimizer of the quadratic form
\begin{eqnarray}
Q_{\Sigma}(u)=\int_{\Omega}|\nabla u|^2 + \int_{\Sigma}\gamma^+{u^+}^2 + \gamma^-{u^-}^2
\end{eqnarray}
among all $u \in H^1(\Omega\setminus \Sigma),\ \ u|_{\partial \Omega}=\varphi$.    

We denote by $<\cdot,\cdot>$ the $L^2(\partial \Omega)$ pairing between $H^{\frac{1}{2}}(\partial \Omega)$ and $H^{-\frac{1}{2}}(\partial \Omega)$.

\begin{definition}\label{DtN}
The Dirichlet to Neumann map associated to \eqref{C1} is the operator
\begin{eqnarray}
\Lambda:H^{\frac{1}{2}}(\partial \Omega)\rightarrow H^{-\frac{1}{2}}(\partial \Omega)
\end{eqnarray}
defined 
by 
\begin{eqnarray}
<\Lambda\varphi, \eta>=\int_{\Omega}\nabla v\cdot \nabla u + \int_{\Sigma}\gamma^+v^+u^+ + \gamma^-v^-u^-
\end{eqnarray} 
for every  $\varphi, \eta\in H^{\frac{1}{2}}(\partial\Omega)$ where $u$ is the solution to \eqref{C1} and $v\in H^1(\Omega\setminus\overline{\Sigma})$ is such that $v|_{\partial \Omega}=\eta$. 
\end{definition}
Note that, as an immediate consequence, we deduce that
\begin{eqnarray}
\Lambda:H^{\frac{1}{2}}(\partial \Omega)\rightarrow H^{-\frac{1}{2}}(\partial \Omega)
\end{eqnarray}
is selfadjoint.
\end{subsection}

\begin{subsection}{Assumptions and a-priori information}
{\bf{Assumption on the domain}}

Given $r_0,M,D>0$ constants we assume that $\Omega \subset \R^{n}$ and 
\begin{eqnarray}\label{domain}
\Omega \   \mbox{is of} \  C^{0,1}\ \mbox{class with constants}\ r_0, M
\end{eqnarray}
such that $\partial \Omega$ is connected.
Furthermore, $\Omega$ is such that 
\begin{eqnarray}\label{diametro}
\mbox{diam}(\Omega)\le D
\end{eqnarray} 
Moreover, we assume that the crack $\Sigma$ is contained into a closed connected hypersurface $\Gamma\subset\Omega$ such that
\begin{eqnarray}\label{superficie}
\Gamma\ \ \mbox{is}\ C^{1,\alpha} \ \mbox{smooth with constants}\ r_0, M 
\end{eqnarray}
and it diffeomorphic to a sphere. 
We also suppose that 
\begin{eqnarray}
\Sigma\  \mbox{within}\ \Gamma\ \mbox{is of class}\ C^{1,\alpha}\  \mbox{with constants}\ r_0, \ M.
\end{eqnarray}
 Namely, for any $Q\in \partial\Sigma$, there exists a rigid transformation of coordinates under which we have $Q=0$ and 
\begin{eqnarray}\label{tip}
 \Sigma\cap B_{r_0}=\{(x',x_n)\in B_{r_0}: x_n=\varphi(x'),\ x_{n-1}>\psi(x'')\}\ 
\end{eqnarray}
where
\begin{eqnarray}\label{psi}
\psi:B^{''}_{r_0}\subset \R^{n-2} \rightarrow \mathbb{R} 
\end{eqnarray}
satisfying
\begin{eqnarray}\label{psi2}
\psi(0)=|\nabla\psi(0)|=0 \ \ \ \mbox{and}\  \ \ \|\psi\|_{C^{1,\alpha}}\le M.
\end{eqnarray}

{\bf{Assumptions on the crack impedances}}

Given a positive number $\overline\gamma$,
the crack impedances $\gamma^+$ and $\gamma^-$ of the unknown crack $\Sigma$ are such that
\begin{subequations}
\label{gamma}
\begin{equation}
\label{gamma0}
\gamma^{\pm}\in C^{0,1}(\Sigma)
\end{equation}
and 
\begin{equation}
\label{040}
0\leq\gamma^{\pm}(x)\leq\overline{\gamma}\ \ \mbox{for any}\ \ x\in \Sigma.
\end{equation}
\end{subequations}

We shall refer to the $r_0,M,D,\bar{\gamma}$ along with the space dimension $n$ as to the a priori data. 
\end{subsection}
\begin{subsection}{The main results}

We start by collecting our main stability results for the unknown crack and the unknown impedance by means of the global D-N map. 
\begin{theorem}\label{main}
Let $\Omega,\Sigma_1,\Sigma_2$ be the domain and the cracks satisfying the a-priori assumptions stated above. If, given $\eps>0$, we have that the D-N maps $\Lambda_1$ and $\Lambda_2$ corresponding to the cracks $\Sigma_1$ and $\Sigma_2$ respectively, satisfy
\begin{eqnarray}\label{errore}
\|\Lambda_1 -\Lambda_2\|_{\mathcal{L}(H^{\frac{1}{2}}(\partial \Omega), H^{-\frac{1}{2}}(\partial \Omega))}\le \eps
\end{eqnarray}
then 
\begin{eqnarray}
d_{H}(\Sigma_1,\Sigma_2)\le C|\log(\eps)|^{-\eta}
\end{eqnarray}
where $C,\eta>0$ are constants depending on the a-priori data only. 
\end{theorem}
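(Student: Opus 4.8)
The plan is to follow the three‑step scheme outlined in the Introduction, combining the Alessandrini‑type identity, quantitative propagation of smallness for harmonic functions, and a blow‑up analysis of the singular solutions. \textbf{Step 1 (the identity).} First I would prove the identity \eqref{identita} of Theorem \ref{intbyparts}, which rewrites the measured discrepancy $\int_{\partial\Omega}(\Lambda_1-\Lambda_2)u_1u_2$ as a sum of integrals over $\Sigma_1\cup\Sigma_2$ of jump terms built from the two potentials and their conormal derivatives. The subtle point is that $u_1$ and $u_2$ are jointly defined only on $\Omega\setminus(\Sigma_1\cup\Sigma_2)$, a set which, despite the smoothness of each crack, need not be regular; the integration by parts leading to \eqref{identita} must therefore be carried out with care, and this is exactly where the regularity result Theorem \ref{rego} is used, since it controls the (possibly singular) behaviour of $\nabla u_i$ near the crack edges $\partial\Sigma_i$ and ensures that the contributions concentrated at $\partial\Sigma_1\cup\partial\Sigma_2$ are negligible. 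The cancellations built into the jump terms make the right‑hand side of \eqref{identita} sensitive only to the symmetric difference $\Sigma_1\triangle\Sigma_2$.

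\textbf{Step 2 (propagation of smallness).} Next I would insert into \eqref{identita} the singular solutions $u_1(\cdot)=R_1(\cdot,y)$, $u_2(\cdot)=R_2(\cdot,w)$, defined on an enlarged domain and carrying Green‑type poles at points $y,w$ lying in a fixed compact subset of the exterior of $\Omega$, and form the function $f(y,w)$ of \eqref{effe}. For such $y,w$ the left‑hand side of \eqref{identita} together with a priori bounds for $R_1,R_2$ away from the cracks yields $|f(y,w)|\le C\eps$; moreover $f$ is harmonic in $\Omega\setminus(\Sigma_1\cup\Sigma_2)$ separately in each of its two variables, and is bounded by a quantity controlled by the a priori data on any region keeping a fixed distance from $\Sigma_1\cup\Sigma_2$. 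I would then propagate the smallness of $f$ along the diagonal $y=w$ from the exterior of $\Omega$ towards a point $P\in\Sigma_1\triangle\Sigma_2$ realising (a fixed fraction of) the distance $d$ between the two cracks, by iterated applications of the three‑spheres inequality, first in one variable and then in the other, along chains of balls contained in $\Omega\setminus(\Sigma_1\cup\Sigma_2)$ whose radii are taken comparable to the distance to $\Sigma_1\cup\Sigma_2$. Because of this choice the chains consist of only $O(|\log d|)$ balls, so the composition of the three‑spheres estimates loses only a \emph{polynomial} power of $d$, giving a bound of the form $|f(P_d,P_d)|\le C\eps^{\theta_d}$ at a point $P_d$ at distance $\simeq d$ from $\Sigma_1\cup\Sigma_2$, with $\theta_d\ge C^{-1}d^{A}$ and $A>0$ depending only on the a priori data. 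The obstructions here — that $\Omega\setminus(\Sigma_1\cup\Sigma_2)$ may be disconnected, and that not every point of $\Sigma_1\triangle\Sigma_2$ is reachable from outside through admissible chains — are met by restricting to the sets $V_l$ of \eqref{Vl} and to the associated $l$‑distance, and then invoking Proposition \ref{equiv}, which under the $C^{1,\alpha}$ a priori assumptions bounds the Hausdorff distance in terms of the $l$‑distance.

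\textbf{Step 3 (blow‑up and conclusion).} Finally I would prove the matching lower bound. Using the asymptotic analysis of $R_i(\cdot,y)$ as the pole $y$ approaches $\Sigma_i$, namely Proposition \ref{stimeasintotiche} and the H\"older‑type continuity of Proposition \ref{holder}, one identifies the leading, non‑cancelling contribution to $f(P_d,P_d)$, coming from the integral over $\Sigma_1\triangle\Sigma_2$ near $P$, and shows that it blows up at a rate at least $C^{-1}d^{-(n-2)}$, hence stays bounded below by a positive constant for $d\le 1$. Comparing this with the estimate of Step 2 gives $\eps^{\theta_d}\ge C^{-1}$, so that $\theta_d|\log\eps|\le C$ and therefore $d^{A}|\log\eps|\le C$, i.e. $d\le C|\log\eps|^{-1/A}$. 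Since, by Proposition \ref{equiv}, $d_H(\Sigma_1,\Sigma_2)$ is dominated by $d$, we conclude $d_H(\Sigma_1,\Sigma_2)\le C|\log\eps|^{-\eta}$ with $\eta>0$ depending only on the a priori data, as claimed.

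\textbf{Main obstacle.} I expect the core difficulty to be Step 2: making the propagation of smallness quantitative on the a priori irregular, possibly disconnected, set $\Omega\setminus(\Sigma_1\cup\Sigma_2)$ with an exponent that degrades only polynomially — rather than exponentially — in $d$, which is precisely what yields the single logarithm instead of a weaker double‑logarithm rate; this is what forces the introduction of the sets $V_l$, of the $l$‑distance, and of the equivalence in Proposition \ref{equiv}. A secondary but nontrivial point, needed both in Step 1 and in Step 3, is the sharp description of the singular behaviour of the potentials near the crack edge and of the singular solutions $R_i(\cdot,y)$ as their pole approaches the crack.
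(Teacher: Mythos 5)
Your proposal is correct and follows essentially the same route as the paper: the integration-by-parts identity of Theorem \ref{intbyparts} (with Theorem \ref{rego} controlling the edge singularities), insertion of the singular solutions and propagation of smallness along chains of balls through the sets $V_l$ with the $l$-distance and Proposition \ref{equiv}, and the blow-up lower bound from Propositions \ref{stimeasintotiche} and \ref{holder}, matched against the $\eps^{C'h^F}$-type upper bound of Proposition \ref{propagerrore} to get the single-logarithm rate. The only detail to adjust when writing it out is that, in the lower bound of Proposition \ref{stimealtobasso}, the pole must be placed at distance $h$ equal to a suitable \emph{power} of $d_{l_1}$ (the paper takes $h\simeq d_{l_1}^{p(2n-3)/(n-2)}$) rather than at distance $\simeq d$, so that the competing term $|d_{l_1}^p-h|^{3-2n}$ is dominated by $h^{2-n}$.
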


\begin{corollary}
Under the same hypothesis of Theorem \ref{main}, we have also that  
\begin{eqnarray*}  
\sup\{ |\gamma_2^{\pm}(Q)-\gamma_1^{\pm}(P)| \ \mbox{s.t.} \  P \in {\Sigma_1}^{r_0}, Q\in {\Sigma_2}^{r_0}\cap B_{2 C|\log(\eps)|^{-\eta}}(P) \}\le C'|\log(\eps)|^{-\eta'} 
\end{eqnarray*}
where $C',\eta'>0$ are constants depending on the a priori data only. 
\end{corollary}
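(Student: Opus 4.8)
The plan is to deduce the corollary from Theorem~\ref{main} essentially by a continuity/perturbation argument: once we know that the two cracks are Hausdorff-close, we must transfer the closeness of the \emph{geometry} to closeness of the \emph{impedance coefficients}. First I would fix a point $P\in\Sigma_1^{r_0}$, i.e.\ a point of $\Sigma_1$ at distance at least $r_0$ from the edge $\partial\Sigma_1$, and a point $Q\in\Sigma_2^{r_0}$ lying within the stability radius $2C|\log\eps|^{-\eta}$ of $P$. By Theorem~\ref{main} the Hausdorff distance $d_H(\Sigma_1,\Sigma_2)$ is already controlled by $C|\log\eps|^{-\eta}$, so $P$ and $Q$ are both close to a common patch of $\Sigma_1\cap\Sigma_2$ (or at least to a region where the two surfaces nearly coincide), and on that patch the local $C^{1,\alpha}$ graph representations \eqref{tip}--\eqref{psi2} for $\Sigma_1$ and $\Sigma_2$ are comparable.

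The core of the argument is then a \emph{local reconstruction of the impedance from the Dirichlet-to-Neumann data}, carried out on a fixed reference ball well inside $\Omega$ away from the crack edges. I would use the singular solutions $R_i(\cdot,y)$ with pole $y$ placed on the $\pm$ side of the common piece of the surface, together with the integral identity \eqref{identita} (Theorem~\ref{intbyparts}) applied now in the regime $\Sigma_1$ and $\Sigma_2$ nearly agreeing. When the two surfaces coincide exactly on a patch $S$, the right-hand side of \eqref{identita} reduces to a boundary term supported on $\Sigma_1\triangle\Sigma_2$ plus a term on $\Sigma_1\cap\Sigma_2$ which, after using the Robin conditions $\partial_{\nu^\pm}u_i^\pm=\gamma_i^\pm u_i^\pm$, becomes $\int_{\Sigma_1\cap\Sigma_2}(\gamma_1^\pm-\gamma_2^\pm)\,u_1^\pm u_2^\pm\,d\sigma$ up to controlled errors. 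Thus $\|\Lambda_1-\Lambda_2\|\le\eps$ forces this weighted integral of $\gamma_1^\pm-\gamma_2^\pm$ to be small; choosing the singular solutions so that the products $u_1^\pm u_2^\pm$ concentrate near a chosen point $x_0$ of the common patch (a standard ``concentration of singular solutions'' device, cf.\ the references \cite{AGa1,AGa2,AKi,DC1,DC2,Sa}) yields a pointwise bound $|\gamma_1^\pm(x_0)-\gamma_2^\pm(x_0)|\le C|\log\eps|^{-\eta'}$ after propagation of smallness. The $C^{0,1}$ regularity \eqref{gamma0} and the a~priori bound \eqref{040} then upgrade this to the stated inequality comparing $\gamma_1^\pm(P)$ and $\gamma_2^\pm(Q)$: the Lipschitz modulus of $\gamma_i^\pm$ absorbs the discrepancy between $P$, $Q$ and $x_0$, all of which lie within $O(|\log\eps|^{-\eta})$ of one another.

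The main obstacle I anticipate is the same geometric pathology that afflicts the proof of Theorem~\ref{main}: the common domain $\Omega\setminus(\Sigma_1\cup\Sigma_2)$ need not be connected, and a point on the ``coincidence set'' where one wants to read off $\gamma^\pm$ need not be quantitatively reachable from $\partial\Omega$ by a chain of balls of controlled radii. One therefore cannot simply place the pole of the singular solution wherever convenient; one has to restrict attention to the reachable sets $V_l$ of \eqref{Vl} and use Proposition~\ref{equiv} to ensure that the patch of $\Sigma_1\cap\Sigma_2$ on which we localize is of this reachable type. A second, more technical difficulty is that $\Sigma_1$ and $\Sigma_2$ only \emph{nearly} coincide rather than coinciding exactly, so the term $\int_{\Sigma_1\cap\Sigma_2}$ in \eqref{identita} must be replaced by an integral over an approximate common patch, with the mismatch estimated using the $C^{1,\alpha}$ closeness of the two graphs together with the singular-behavior bounds of Theorem~\ref{rego} for $\n u_i$ near the edges; controlling these error terms uniformly is where the bulk of the work lies.

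Finally, I would note that once the pointwise estimate is in hand the quantifier structure in the statement is purely bookkeeping: the supremum is taken over all admissible pairs $(P,Q)$ with $Q\in\Sigma_2^{r_0}\cap B_{2C|\log\eps|^{-\eta}}(P)$, and since every constant produced above depends only on $r_0,M,D,\bar\gamma,n$, the resulting $C',\eta'$ depend only on the a~priori data, as claimed.
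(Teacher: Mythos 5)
Your route is not the paper's, and it has a genuine gap at its core. The paper disposes of the Corollary in one stroke: having Theorem \ref{main} in hand, the two cracks are $O(|\log\eps|^{-\eta})$-close, and the impedance estimate then follows from quantitative stability estimates for the Cauchy problem, as in \cite[Theorem 2.3]{Si} — one propagates the smallness of $u_1-u_2$ from $\partial\Omega$ (where it is controlled by $\|\Lambda_1-\Lambda_2\|\le\eps$) up to the cracks by quantitative unique continuation, reads off $\gamma_i^{\pm}$ from the Robin condition $\partial_{\nu^{\pm}}u_i^{\pm}=\gamma_i^{\pm}u_i^{\pm}$ using a solution bounded away from zero, and absorbs the displacement between $P$ and $Q$ with the Lipschitz bound \eqref{gamma0}. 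Note that the statement deliberately compares $\gamma_1^{\pm}(P)$ and $\gamma_2^{\pm}(Q)$ at \emph{distinct} points of the two cracks precisely because no common patch of $\Sigma_1\cap\Sigma_2$ is assumed to exist.

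This is where your argument breaks down. Your mechanism for extracting $\gamma_1^{\pm}-\gamma_2^{\pm}$ is the $\Sigma_1\cap\Sigma_2$ term of \eqref{identita}, rewritten via the Robin conditions as a weighted integral of the impedance difference. But under the hypotheses of Theorem \ref{main} the set $\Sigma_1\cap\Sigma_2$ may well have zero $(n-1)$-dimensional measure (the cracks are merely Hausdorff-close, not coincident on any patch), in which case the term you rely on vanishes identically and carries no information. Your proposed repair — replace it by an ``approximate common patch'' and estimate the mismatch — is exactly the hard part, and it is not a routine error estimate: on $\Sigma_1$ the function $u_2$ satisfies no Robin condition, so to claim $\partial_{\nu_1}u_2\approx\gamma_2^{\pm}u_2$ there you must quantitatively transfer the Cauchy data of $u_2$ from $\Sigma_2$ to the nearby surface $\Sigma_1$, i.e.\ prove a stability estimate for a Cauchy problem in the thin region between the two cracks (where, moreover, $\nabla u_2$ can be singular near $\partial\Sigma_2$, cf.\ Theorem \ref{rego}). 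That is precisely the content of the result the paper cites, and it is the step your sketch leaves entirely unproved; the concentration-of-singular-solutions device and the reachability machinery ($V_l$, Proposition \ref{equiv}) that you invoke address the geometric blow-up argument for Theorem \ref{main}, not this transfer. So as written the proposal does not yield the Corollary.
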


\begin{proof}
The Corollary follows by combining the result in Theorem \ref{main} and quantitative stability estimates for the Cauchy problem. 
For the details of the proof we refer to \cite[Theorem 2.3]{Si}. 
\end{proof}
\end{subsection}

\begin{subsection}{Variants}
In addition, we now state some variants of Theorem \ref{main} basically relying on other types of data availability. We shall omit proofs since they require only minimal adjustments in comparison to the proof of Theorem \ref{main}.

We start by defining the local version of the Dirichlet to Neumann map. 

Let us fix an open neighborhood $\Delta_{{\rho}_0}=B_{{\rho}_0}(x_0)\cap\partial \Omega$ for a fixed point $x_0\in \partial \Omega$ and a given $\rho_0>0$ . We introduce the trace space $H_{00}^{\frac{1}{2}}(\Delta_{{\rho}_0})$  as the interpolation space $[H^{1}_0(\Delta_{{\rho}_0}),L^2(\Delta_{{\rho}_0})]_{\frac{1}{2}}$, we refer to \cite[Chap.1]{LiMa} for further details . The functions in $H_{00}^{\frac{1}{2}}(\Delta_{{\rho}_0})$ might be also characterized as the elements in $H^{\frac{1}{2}}(\partial \Omega)$ which are identically zero outside $\Delta_{{\rho}_0}$ (see for instance \cite{T}), this identification shall be understood throughout. We denote with $H_{00}^{-\frac{1}{2}}(\Delta_{{\rho}_0})$ its dual space, which also can be interpreted as a subspace of $H^{-\frac{1}{2}}(\partial \Omega)$.  We continue to use the notation $<\cdot,\cdot>$ for the duality pairing between $H_{00}^{\frac{1}{2}}(\Delta_{{\rho}_0})$ and $H_{00}^{-\frac{1}{2}}(\Delta_{{\rho}_0})$ based on the $L^2$ scalar product. 

\begin{definition}
 We shall define as the local Dirichlet to Neumann map associated to \eqref{C1} and $\Delta_{{\rho}_0}$ the operator 
\begin{eqnarray}
\Lambda^{\Delta_{{\rho}_0}}:H_{00}^{\frac{1}{2}}(\Delta_{{\rho}_0})\rightarrow H_{00}^{-\frac{1}{2}}(\Delta_{{\rho}_0})
\end{eqnarray}
defined again by 
\begin{eqnarray}
<\Lambda^{\Delta_{{\rho}_0}}\varphi, \eta>=\int_{\Omega}\nabla u\cdot \nabla v  + \int_{\Sigma}\gamma^+u^+v^+ + \gamma^-u^-v^-
\end{eqnarray} 
for every $\varphi, \eta \in H_{00}^{\frac{1}{2}}(\Delta_{\rho_0})$ where $u$ is the solution to \eqref{C1} and $v\in H^1(\Omega\setminus\overline{\Sigma})$ is such that $v|_{\partial \Omega}=\eta$.

\end{definition}

We now consider the global Neumann to Dirichlet map and we introduce the following space of distributions ${}_0H^{-\frac{1}{2}}(\partial \Omega)=\{\eta\in H^{-\frac{1}{2}}(\partial \Omega)\ : \ <\eta,1> =0 \}$ .
\begin{definition}
We refer to the Neumann to Dirichlet map as to the selfadjoint operator 
\begin{eqnarray}
N : {}_0H^{-\frac{1}{2}}(\partial \Omega) \rightarrow H^{\frac{1}{2}}(\partial \Omega)
\end{eqnarray}
such that 
\begin{eqnarray}
<\eta, N \eta>= \int_{\Omega}|\nabla u|^2 + \int_{\Sigma}\gamma^+{u^+}^2 + \gamma^-{u^-}^2
\end{eqnarray}
for any $\eta \in {}_0H^{-\frac{1}{2}}(\partial \Omega) $, where $u\in H^1(\Omega\setminus \Sigma)$ is the weak solution to the mixed Neumann-Robin type problem
\begin{equation}\label{C2}
\left\{
\begin{array}
{lcl}
\Delta u=0\ ,&& \mbox{in $\Omega\setminus\overline{\Sigma}$ ,}
\\
\partial_{\nu} u= \eta\ ,   && \mbox{on $\partial\Omega$ ,}
\\
\partial_{\nu^{\pm}}u^{\pm} - \gamma^{\pm} u^{\pm}=0 \ , && \mbox{on $\Sigma^{\pm}$ .}
\end{array}
\right.
\end{equation}
If, $\gamma^+\equiv\gamma^-\equiv 0$ on $\Sigma^{\pm}$, we additionally require in \eqref{C2} the normalization condition $\int_{\partial \Omega} u = 0$.

\end{definition}
We are now finally in position to deal with the local Neumann to Dirichlet map. 
Denote $\Delta_{{\rho}_0}^{\prime} =\partial \Omega \setminus \overline{\Delta_{{\rho}_0}}$.
Let us consider the following space of distributions ${}_0H^{-\frac{1}{2}}(\Delta_{{\rho}_0})=\{\eta \in {}_0H^{-\frac{1}{2}}(\partial \Omega)\ :\ <\eta,\varphi>=0 \ \forall \varphi \in  H_{00}^{\frac{1}{2}}(\Delta_{\rho_0}^{\prime})\}$.
\begin{definition}
We shall define as the local Neumann to Dirichlet map associated to $\Delta_{{\rho}_0}$ the operator
\begin{eqnarray}
N^{\Delta_{{\rho}_0}} : {}_0H^{-\frac{1}{2}}(\Delta_{{\rho}_0}) \rightarrow \left({}_0H^{-\frac{1}{2}}(\Delta_{{\rho}_0})\right)^{\ast} \subset H^{\frac{1}{2}}(\partial\Omega)
\end{eqnarray}
such that 
\begin{eqnarray}
<\eta, N^{\Delta_{{\rho}_0}} \eta>= \int_{\Omega}|\nabla u|^2 + \int_{\Sigma}\gamma^+{u^+}^2 + \gamma^-{u^-}^2
\end{eqnarray}
for any $\eta \in {}_0H^{-\frac{1}{2}}(\Delta_{{\rho}_0}) $, where $u\in H^1(\Omega\setminus \Sigma)$ is the weak solution to \eqref{C2}. 
Again, if $\gamma^+\equiv\gamma^-\equiv 0$ on $\Sigma^{\pm}$, we further impose the condition $\int_{\partial \Omega} u = 0$.

\end{definition}

The first variants of our main result concerns the case when the Neumann to Dirichlet map is at our disposal instead.

\begin{theorem}\label{mainN}
Let the hypothesis of Theorem \ref{main} be fulfilled. If, given $\eps>0$, we have that the N-D maps $N_1$ and $N_2$ corresponding to the cracks $\Sigma_1$ and $\Sigma_2$ respectively, satisfy
\begin{eqnarray}\label{erroreN}
\|N_1 -N_2\|_{\mathcal{L}({}_0H^{-\frac{1}{2}}(\partial \Omega), H^{\frac{1}{2}}(\partial \Omega))}\le \eps
\end{eqnarray}
then 
\begin{eqnarray}
d_{H}(\Sigma_1,\Sigma_2)\le C|\log(\eps)|^{-\eta}
\end{eqnarray}
where $C,\eta>0$ are constants depending on the a-priori data only. 
\end{theorem}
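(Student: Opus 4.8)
The plan is to reduce Theorem \ref{mainN} to Theorem \ref{main} by exhibiting a quantitative relation between the discrepancies $\|N_1-N_2\|$ and $\|\Lambda_1-\Lambda_2\|$. The starting observation is the standard operator-algebraic link between a D-N map and the corresponding N-D map. Since each $\Lambda_i$ is a selfadjoint, bounded operator $H^{\frac12}(\partial\Omega)\to H^{-\frac12}(\partial\Omega)$ which (by the well-posedness of \eqref{C1} and the coercivity of $Q_\Sigma$, using that $\gamma^\pm\ge 0$ and $\partial\Omega$ Lipschitz) is invertible with bounded inverse, one has $N_i=\Lambda_i^{-1}$ on the appropriate quotient/subspace, namely on ${}_0H^{-\frac12}(\partial\Omega)$ modulo constants when the $\gamma^\pm$ do not both vanish, and with the extra normalization $\int_{\partial\Omega}u=0$ handled exactly as in the definition of $N$ when $\gamma^+\equiv\gamma^-\equiv 0$. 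I would first record, as a lemma, the a priori two-sided bounds
\[
\|\Lambda_i\|_{\mathcal L(H^{\frac12},H^{-\frac12})}\le C,\qquad \|N_i\|_{\mathcal L({}_0H^{-\frac12},H^{\frac12})}\le C,
\]
with $C$ depending only on the a priori data: the upper bound on $\Lambda_i$ is immediate from the definition and the minimization property of $Q_{\Sigma_i}$, while the upper bound on $N_i=\Lambda_i^{-1}$ follows from a uniform coercivity (Poincar\'e-type) estimate for $Q_{\Sigma_i}$ on $\{u\in H^1(\Omega\setminus\Sigma_i):u|_{\partial\Omega}\in {}_0H^{\frac12}\}$, which holds uniformly because the Lipschitz character of $\partial\Omega$ and the geometry of $\Gamma\supset\Sigma_i$ are controlled by $r_0,M,D$, independently of the particular admissible crack.

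With these uniform bounds in hand, the algebraic identity
\[
N_1-N_2=\Lambda_1^{-1}-\Lambda_2^{-1}=\Lambda_1^{-1}(\Lambda_2-\Lambda_1)\Lambda_2^{-1}=N_1(\Lambda_2-\Lambda_1)N_2
\]
(interpreted on the correct subspaces, with the rank-one correction coming from constants treated separately) gives immediately
\[
\|\Lambda_1-\Lambda_2\|_{\mathcal L(H^{\frac12},H^{-\frac12})}\le \|\Lambda_1\|\,\|N_1-N_2\|\,\|\Lambda_2\|\le C^2\,\|N_1-N_2\|_{\mathcal L({}_0H^{-\frac12},H^{\frac12})}\le C^2\eps .
\]
Thus \eqref{erroreN} implies an estimate of the form \eqref{errore} with $\eps$ replaced by $C^2\eps$, and Theorem \ref{main} applied to this bound yields $d_H(\Sigma_1,\Sigma_2)\le C'|\log(C^2\eps)|^{-\eta}\le C''|\log\eps|^{-\eta}$ for $\eps$ small, after absorbing the constant into $C''$ and $\eta$ (shrinking $\eta$ slightly if necessary so the estimate holds on the whole range where the statement is non-vacuous). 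This is precisely the ``minimal adjustment'' alluded to before the statement; alternatively, one can bypass the inversion entirely and rerun the proof of Theorem \ref{main} verbatim, replacing the Alessandrini-type identity \eqref{identita} by its N-D analogue, in which $\int_{\partial\Omega}(\Lambda_1-\Lambda_2)u_1u_2$ is replaced by $\int_{\partial\Omega}(N_1-N_2)\partial_\nu u_1\,\partial_\nu u_2$, and feeding in Neumann data of the singular solutions on $\partial\Omega$ — all the propagation-of-smallness and blow-up arguments of the three steps go through unchanged since they take place in the interior.

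The main obstacle is not conceptual but bookkeeping: one must be careful about the fully Neumann case $\gamma^+\equiv\gamma^-\equiv 0$, where $\Lambda_i$ has a nontrivial kernel (the constants) and $N_i$ is defined only on the mean-zero subspace with the extra normalization $\int_{\partial\Omega}u=0$, so the identity $N_i=\Lambda_i^{-1}$ must be read as an isomorphism between ${}_0H^{-\frac12}(\partial\Omega)$ and $H^{\frac12}(\partial\Omega)/\mathbb R$, and one needs the connectedness of $\partial\Omega$ to guarantee that the kernel is exactly the constants. The only genuinely quantitative input is the uniform coercivity constant for $Q_{\Sigma_i}$, i.e. a Poincar\'e inequality on $\Omega\setminus\Sigma_i$ with constant depending only on the a priori data; this is standard for Lipschitz domains and for cracks contained in a fixed-regularity hypersurface, and can be quoted. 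For the local versions $\Lambda^{\Delta_{\rho_0}}$ and $N^{\Delta_{\rho_0}}$ the same scheme applies on the restricted trace spaces $H^{\frac12}_{00}(\Delta_{\rho_0})$, once one uses the injectivity furnished by Eller's unique continuation (so that the local maps still determine, and are controlled by, enough of the singular-solution data to run the interior arguments), which is why these variants are stated without separate proof.
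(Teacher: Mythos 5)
Your fallback suggestion --- rerun the proof of Theorem \ref{main} with the left-hand side of the identity \eqref{identita} rewritten in terms of the N-D maps --- is exactly what the paper intends: the Remark following Theorem \ref{intbyparts} records that $\int_{\partial\Omega}(\Lambda_1-\Lambda_2)u_1u_2$ equals $\langle\eta_1,(N_2-N_1)\eta_2\rangle$ with $\eta_i=\Lambda_iu_i$, and the only change in the proof is the derivation of the analogue of \eqref{stimaerrore2}, where one bounds $|f(\bar y,w)|\le\|N_1-N_2\|\,\|\Lambda_1R_1(\cdot,\bar y)\|_{H^{-1/2}(\partial\Omega)}\|\Lambda_2R_2(\cdot,w)\|_{H^{-1/2}(\partial\Omega)}\le C\eps$ using \eqref{asin1}; all the propagation-of-smallness and blow-up steps are interior and untouched. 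Had you led with that, the proposal would match the paper.

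Your primary route, however, has a genuine gap in the impedance case. When $\gamma^{\pm}\not\equiv 0$ the total flux of a solution is $\langle\Lambda_i\varphi,1\rangle=\int_{\Sigma_i}(\gamma_i^+u^++\gamma_i^-u^-)$, which does not vanish in general, so $\mathrm{Ran}\,\Lambda_i\not\subset{}_0H^{-\frac12}(\partial\Omega)$. Consequently $N_i$ (defined in the paper only on ${}_0H^{-\frac12}(\partial\Omega)$) is not $\Lambda_i^{-1}$, the composition $\Lambda_1(N_2-N_1)\Lambda_2$ is simply not defined, and knowledge of $N_1-N_2$ on ${}_0H^{-\frac12}(\partial\Omega)$ only controls $\Lambda_1-\Lambda_2$ on a codimension-one set of Dirichlet data (the graph of $N_i$ is $\mathcal C_i\cap\bigl(H^{\frac12}\times{}_0H^{-\frac12}\bigr)$, where $\mathcal C_i$ is the Cauchy-data space); your ``rank-one correction coming from constants'' addresses only the pure Neumann case $\gamma^+\equiv\gamma^-\equiv0$, where the kernel-of-$\Lambda$/quotient picture is correct. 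So the claimed inequality $\|\Lambda_1-\Lambda_2\|\le C^2\|N_1-N_2\|_{\mathcal L({}_0H^{-1/2},H^{1/2})}$ is not established (and is dubious as stated) for nonzero impedances, which is precisely the setting of the paper. A second, smaller soft spot: the uniform bound $\|N_i\|\le C$ is not a plain Poincar\'e inequality when $\gamma^{\pm}$ is nonzero but small, since no normalization is imposed then and the additive constant is pinned only through the weighted average $\int_{\Sigma}\gamma^{\pm}u^{\pm}$; making this uniform in $0\le\gamma^{\pm}\le\overline\gamma$ requires an argument (e.g. via the $C^{0,\alpha}(\Sigma)$ bound of Theorem \ref{rego}), not just a citation. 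The direct N-D rewriting avoids both issues, which is why the paper proceeds that way.
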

Finally, we treat the cases when the measurements can be performed only on an open, non-empty subset $S$ of $\partial \Omega$. Such an instance leads to the introduction of the local D-N map and the local N-D map.

\begin{theorem}\label{mainl}
Let $\Omega,\Sigma_1,\Sigma_2$ be the domain and the cracks satisfying the a-priori assumptions stated above. If, given $\eps>0$, we have that the local D-N maps $\Lambda^{\Delta_{{\rho}_0}}_1$ and $\Lambda^{\Delta_{{\rho}_0}}_2$ associated to $\Delta_{{\rho}_0}$ and corresponding to the cracks $\Sigma_1$ and $\Sigma_2$ respectively, satisfy
\begin{eqnarray}\label{errorel}
\|\Lambda^{\Delta_{{\rho}_0}}_1 -\Lambda^{\Delta_{{\rho}_0}}_2\|_{\mathcal{L}(H_{00}^{\frac{1}{2}}(\Delta_{{\rho}_0}), H_{00}^{-\frac{1}{2}}(\Delta_{{\rho}_0}))}\le \eps
\end{eqnarray}
then 
\begin{eqnarray}
d_{H}(\Sigma_1,\Sigma_2)\le C|\log(\eps)|^{-\eta}
\end{eqnarray}
where $C,\eta>0$ are constants depending on the a-priori data only. 
\end{theorem}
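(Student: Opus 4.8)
The plan is to derive Theorem~\ref{mainl} as a corollary of the machinery already developed for Theorem~\ref{main}, since the authors have announced that ``only minimal adjustments'' are needed. The core of the argument for the global case rests on three ingredients: the integration-by-parts identity \eqref{identita} relating $\Lambda_1-\Lambda_2$ to integrals of jumps over $\Sigma_1\cup\Sigma_2$; the substitution of singular (Green-type) solutions $R_i(\cdot,y)$ with poles $y,w$ placed \emph{outside} $\Omega$; and the subsequent propagation of smallness for the harmonic function $f(y,w)$ of \eqref{effe}, culminating in a blow-up estimate as $y=w$ approaches $\Sigma_1\triangle\Sigma_2$. For the local version, the first two steps are essentially unchanged — the identity \eqref{identita} is local on $\partial\Omega$ in nature once one restricts the Dirichlet data $\varphi,\eta$ to be supported in $\Delta_{\rho_0}$, and the singular solutions $R_i(\cdot,y)$ are still admissible test functions. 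The only genuine modification is that the poles $y,w$ must now be chosen in a (relatively open) region $\Gamma_{r}\cap B_{\rho_1}(x_0)$ \emph{near} the measurement patch $\Delta_{\rho_0}$, rather than anywhere in the exterior collar $\Omega_r$ of $\partial\Omega$.

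First I would fix a point $x_0\in\partial\Omega$ and $\rho_0>0$ as in the statement, and observe that for poles $y$ lying in a suitable exterior neighborhood $\mathcal{N}$ of $\Delta_{\rho_0}$ — say within a ball $B_{\rho_0/C}(x_0)$ intersected with $\Omega^c$, at controlled distance from $\partial\Omega$ — the singular solution $R_i(\cdot,y)$ restricted to $\partial\Omega$ and to $\Delta_{\rho_0}'$ is smooth and, crucially, its trace and conormal derivative on $\Delta_{\rho_0}'$ decay in a quantitative way. One then writes the bilinear form $\langle(\Lambda_1^{\Delta_{\rho_0}}-\Lambda_2^{\Delta_{\rho_0}})(R_1|_{\partial\Omega}),R_2|_{\partial\Omega}\rangle$, noting that only the $\Delta_{\rho_0}$-components of the traces are seen by the local map; the ``tail'' contributions on $\Delta_{\rho_0}'$ are controlled by the a priori bounds on $R_i$ away from their poles together with interior/boundary Schauder-type estimates. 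This yields, exactly as in the global case, that $|f(y,w)|\le C\eps$ plus a harmless additive error for $y,w\in\mathcal{N}$, where $\eps$ is now the local operator-norm bound \eqref{errorel}.

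Next I would feed this bound into the propagation-of-smallness scheme of the Second and Third Steps. Here the point is that $f$ is harmonic separately in $y$ and $w$ on $\Omega\setminus(\Sigma_1\cup\Sigma_2)$, and one propagates the smallness from $\mathcal{N}$ through chains of balls toward points of $\Sigma_1\triangle\Sigma_2$. The sets $V_l$ of ``reachable'' points and the notion of $l$-distance, together with Proposition~\ref{equiv} (Hausdorff distance dominated by $l$-distance), carry over verbatim once one checks that every point that was reachable from the full exterior collar $\Omega_r$ is still reachable — through a chain of at most a controlled number of balls of radius bounded below — starting from the smaller region $\mathcal{N}$ near $\Delta_{\rho_0}$. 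This is where the a priori connectedness of $\partial\Omega$ and the Lipschitz character of $\Omega$ enter: a boundary point of $\partial\Omega$ outside $\Delta_{\rho_0}$ can be connected to $\Delta_{\rho_0}$ along an exterior path staying in a collar of $\partial\Omega$, and the number and radii of balls needed depend only on the a priori data $r_0,M,D$. The blow-up estimate of $f(y,y)$ as $y\to\Sigma_1\triangle\Sigma_2$ (Propositions~\ref{stimeasintotiche}, \ref{holder}) is purely interior to $\Omega$ and is untouched by the localization.

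The main obstacle I anticipate is precisely the quantitative reachability step: one must show that restricting the admissible pole region from all of $\Omega^c$ (or the collar $\Omega_r$) to a neighborhood of $\Delta_{\rho_0}$ does not destroy the three-balls chaining with constants depending only on the a priori data. Concretely, one needs a connectedness argument for the exterior of $\Omega$ that produces, for each target point, a chain whose cardinality is bounded in terms of $D/r_0$ and whose ball radii are bounded below in terms of $r_0,M$ and $\mathrm{dist}$ to $\partial\Sigma_i$ — and this chain must be routed through $\mathcal{N}$. Once this is in place, combining the upper bound $|f(y,y)|\le C\eps^{\theta}$ (after propagation, with $\theta\in(0,1)$ from the iterated three-balls inequality) with the lower blow-up bound as $y=w$ approaches $\Sigma_1\triangle\Sigma_2$ forces the $l$-distance, hence by Proposition~\ref{equiv} the Hausdorff distance $d_H(\Sigma_1,\Sigma_2)$, to be bounded by $C|\log\eps|^{-\eta}$, which is the claim. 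The analogous treatment for the local N--D map (and the global N--D case of Theorem~\ref{mainN}) follows by the standard duality relating $N-N$ differences to $\Lambda-\Lambda$ differences up to the a priori norm bounds on the maps, so no new idea is required there. $\hfill\square$
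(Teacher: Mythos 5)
There is a genuine gap at the central step of your plan, namely the claim that for poles $y,w$ placed near the patch one gets $|f(y,w)|\le C\eps$ ``plus a harmless additive error''. The hypothesis \eqref{errorel} only controls the bilinear form $<(\Lambda_1-\Lambda_2)\varphi,\eta>$ for pairs $\varphi,\eta\in H_{00}^{\frac{1}{2}}(\Delta_{\rho_0})$, i.e.\ for boundary data vanishing outside $\Delta_{\rho_0}$. The traces $R_i(\cdot,y)|_{\partial\Omega}$ are \emph{not} supported in $\Delta_{\rho_0}$: however close the poles are to $x_0$, these traces are of size $O(1)$ on $\Delta_{\rho_0}'=\partial\Omega\setminus\overline{\Delta_{\rho_0}}$ (they decay like $|x-y|^{2-n}$, they do not vanish). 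Writing $R_i|_{\partial\Omega}=\varphi_i+\psi_i$ with $\varphi_i$ supported in $\Delta_{\rho_0}$, the cross and tail terms involve $(\Lambda_1-\Lambda_2)\psi_i$, on which \eqref{errorel} gives no smallness whatsoever; they are bounded only by the a priori norms of the two maps, hence are $O(1)$ and not $o(1)$ in $\eps$. An $O(1)$ additive error destroys the starting estimate for the propagation of smallness, so the scheme of your second paragraph does not go through as written. This is exactly the intrinsic difficulty of local data, and it cannot be fixed by Schauder-type bounds on $R_i$ away from the poles or by rerouting the chains of balls through a neighborhood of $\Delta_{\rho_0}$ (the reachability/chaining issue you worry about is comparatively minor).

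The paper takes a different route precisely to avoid this: it does not rerun the singular-solutions argument with localized poles, but combines Theorem \ref{main} (resp.\ Theorem \ref{mainN} for the N--D case) with the general method of \cite{AKi} (see also \cite{AGa2,AVe}), which yields a H\"older-type dependence of a global D--N map, associated with a suitably augmented larger domain, upon the local map $\Lambda^{\Delta_{\rho_0}}$. Once the local-to-global reduction with H\"older rate is available, the single-log estimate of Theorem \ref{main} applied in the augmented setting gives the conclusion, since the composition of a logarithmic with a H\"older modulus is still logarithmic. If you want a self-contained proof along your lines, the missing ingredient is precisely such a quantitative local-to-global step (based on stability for the Cauchy problem / propagation of smallness of the Cauchy data along $\partial\Omega$), not the geometric chaining argument you flagged as the main obstacle.
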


\begin{theorem}\label{mainNl}
Let the hypothesis of Theorem \ref{main} be fulfilled. If, given $\eps>0$, we have that the local N-D maps $N^{\Delta_{{\rho}_0}}_1$ and $N^{\Delta_{{\rho}_0}}_2$ associated to $\Delta_{{\rho}_0}$ and corresponding to the cracks $\Sigma_1$ and $\Sigma_2$ respectively, satisfy
\begin{eqnarray}\label{erroreNl}
\|N^{\Delta_{{\rho}_0}}_1 -N^{\Delta_{{\rho}_0}}_2\|_{\mathcal{L}({}_0H^{-\frac{1}{2}}(\Delta_{{\rho}_0}),  \left({}_0H^{-\frac{1}{2}}(\Delta_{{\rho}_0})\right)^{\ast})}\le \eps
\end{eqnarray}
then 
\begin{eqnarray}
d_{H}(\Sigma_1,\Sigma_2)\le C|\log(\eps)|^{-\eta}
\end{eqnarray}
where $C,\eta>0$ are constants depending on the a-priori data only. 
\end{theorem}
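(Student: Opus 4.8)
The plan is to obtain Theorem \ref{mainNl} by reducing the local Neumann-to-Dirichlet setting to the global Dirichlet-to-Neumann setting treated in Theorem \ref{main}, since, as the authors themselves announce, only ``minimal adjustments'' are needed. First I would record the abstract linear-algebra relation between a D-N map and its N-D counterpart: writing $P_S$ for the orthogonal projection onto the subspace of $H^{-\frac{1}{2}}(\partial\Omega)$ (or $H^{\frac12}(\partial\Omega)$) of functions supported in, respectively with mean zero against the complement of, the measurement window $\Delta_{\rho_0}$, one has that $N^{\Delta_{\rho_0}}$ is, up to these projections and a finite-dimensional correction coming from the normalization $\int_{\partial\Omega}u=0$ in the pure-Neumann case $\gamma^\pm\equiv0$, the inverse of the corresponding restricted D-N map. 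Concretely, $\Lambda^{\Delta_{\rho_0}}$ and $N^{\Delta_{\rho_0}}$ are (generalized) inverses of one another on the appropriate complemented subspaces. This is the same device used in the passage from Theorem \ref{main} to Theorem \ref{mainN} and then to Theorem \ref{mainl}.

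The key steps, in order, would be: (i) establish a two-sided norm comparison of the form $\|\Lambda^{\Delta_{\rho_0}}_1-\Lambda^{\Delta_{\rho_0}}_2\|\le C\,\|N^{\Delta_{\rho_0}}_1-N^{\Delta_{\rho_0}}_2\|^{\kappa}$ (or simply a Lipschitz bound) for some $\kappa>0$ and $C$ depending only on the a priori data; this uses the resolvent-type identity $\Lambda_1-\Lambda_2 = -\Lambda_1(N_1-N_2)\Lambda_2$ composed with the projections, together with an a priori bound $\|\Lambda^{\Delta_{\rho_0}}_i\|\le C$ which follows from the coercivity of $Q_\Sigma$ and the regularity/diameter assumptions \eqref{domain}, \eqref{diametro}, \eqref{superficie}, \eqref{gamma}. (ii) Feed this into Theorem \ref{mainl}: since \eqref{erroreNl} gives $\|N^{\Delta_{\rho_0}}_1-N^{\Delta_{\rho_0}}_2\|\le\eps$, step (i) yields $\|\Lambda^{\Delta_{\rho_0}}_1-\Lambda^{\Delta_{\rho_0}}_2\|\le C\eps^{\kappa}$, and Theorem \ref{mainl} then delivers $d_H(\Sigma_1,\Sigma_2)\le C|\log(C\eps^\kappa)|^{-\eta}\le C'|\log\eps|^{-\eta}$ for $\eps$ small, the constants and exponent still depending only on the a priori data. (iii) Handle the degenerate case $\gamma^+\equiv\gamma^-\equiv0$ separately, where $N^{\Delta_{\rho_0}}$ is only defined on the mean-zero subspace and carries the extra normalization $\int_{\partial\Omega}u=0$: here the inversion identity holds modulo the one-dimensional space of constants, and one checks that this finite-rank ambiguity does not affect the norm comparison in step (i).

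I expect the main obstacle to be step (i) in the \emph{local} case, i.e. verifying that the restricted operators $\Lambda^{\Delta_{\rho_0}}$ and $N^{\Delta_{\rho_0}}$ really are mutual (generalized) inverses on the correctly identified subspaces $H_{00}^{\frac12}(\Delta_{\rho_0})$, ${}_0H^{-\frac12}(\Delta_{\rho_0})$ and their duals — the bookkeeping with the projections $P_{\Delta_{\rho_0}}$, the interpolation spaces $H_{00}^{\pm\frac12}$, and the duality pairings must be done carefully so that the composition $\Lambda^{\Delta_{\rho_0}}(N^{\Delta_{\rho_0}}-\,\cdot\,)\Lambda^{\Delta_{\rho_0}}$ makes sense and lands back in the right space. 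Once the functional-analytic setup is pinned down, the quantitative estimate is routine: it is just boundedness of each factor, which is exactly the coercivity estimate for \eqref{C1}–\eqref{C2} already implicit in the definition of the D-N map and used in Theorem \ref{main}. Thus the proof is essentially a soft reduction, and no new use of singular solutions or propagation of smallness is required beyond what Theorem \ref{mainl} already contains.
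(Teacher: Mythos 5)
Your reduction breaks down at step (i), and the failure is structural, not a matter of bookkeeping. The local maps are \emph{compressions} of the global ones: $\Lambda^{\Delta_{\rho_0}}$ is the global $\Lambda$ restricted to Dirichlet data in $H_{00}^{\frac12}(\Delta_{\rho_0})$ with the output projected back to that window, and $N^{\Delta_{\rho_0}}$ is the analogous compression of $N$ to ${}_0H^{-\frac12}(\Delta_{\rho_0})$. While $N=\Lambda^{-1}$ globally (modulo the constants/normalization in the pure Neumann case), the inverse of a compression is \emph{not} the compression of the inverse (it is a Schur complement of it). Concretely: take $\varphi\in H_{00}^{\frac12}(\Delta_{\rho_0})$ and let $u$ solve \eqref{C1}; then $\Lambda\varphi=\partial_\nu u|_{\partial\Omega}$ does not vanish on $\partial\Omega\setminus\Delta_{\rho_0}$, so when you restrict it to the window and apply $N^{\Delta_{\rho_0}}$ you are solving a \emph{different} boundary value problem (homogeneous Neumann condition outside $\Delta_{\rho_0}$ instead of homogeneous Dirichlet), whose solution is not $u$. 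Hence $\Lambda^{\Delta_{\rho_0}}$ and $N^{\Delta_{\rho_0}}$ are not mutual (generalized) inverses, the identity $\Lambda_1-\Lambda_2=-\Lambda_1(N_1-N_2)\Lambda_2$ does not survive composition with the projections, and no soft bound of the form $\|\Lambda^{\Delta_{\rho_0}}_1-\Lambda^{\Delta_{\rho_0}}_2\|\le C\|N^{\Delta_{\rho_0}}_1-N^{\Delta_{\rho_0}}_2\|^{\kappa}$ follows from boundedness of the factors. (Such an algebraic reduction does work for the \emph{global} pair $\Lambda$, $N$, which is why passing between Theorem \ref{main} and Theorem \ref{mainN} is cheap; it is precisely the localization that destroys it.)

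This is also where your proposal diverges from the paper's intended argument. The paper does not deduce Theorem \ref{mainNl} from Theorem \ref{mainl}; it combines the \emph{global} N-D stability result, Theorem \ref{mainN}, with the method of \cite{AKi}, which yields a H\"older-type dependence of a global map (on a larger domain) upon the corresponding local map. That step is genuinely analytic: it rests on quantitative unique continuation / propagation of smallness for harmonic functions near $\partial\Omega$, not on operator inversion. So your closing claim that ``no new use of propagation of smallness is required beyond what Theorem \ref{mainl} already contains'' is exactly what fails: either you must import the \cite{AKi}-type local-to-global estimate for the N-D data (the paper's route), or you must rerun the singular-solution argument of Proposition \ref{propagerrore} with the identity in the form $<\eta_1,(N_2-N_1)\eta_2>$ localized to $\Delta_{\rho_0}$; a purely functional-analytic passage from \eqref{erroreNl} to \eqref{errorel} is not available.
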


The proofs of the last two theorems can be achieved by combining the results in Theorem \ref{main} and in Theorem \ref{mainN} respectively with the arguments in \cite{AKi} where the authors provided a quite general method which allow to obtain an H\"{o}lder type dependence of a global D-N map from a local one in a larger domain (see also \cite{AGa2,AVe} for related results).

Of course, a more general portion $\mathcal{U}$ of $\partial \Omega$ could be used in the above theorems with local data. However the stability constants shall necessarily depends on the inradius of such a portion $\mathcal{U}$. For this reason, there is no loss of generality, in formulating the above theorems in terms of the spherical neighborhood $\Delta_{{\rho}_0}$.

\begin{remark}
For the sake of brevity we only discuss here the stability issue for the $n$-dimensional case with $n\ge 3$. However our arguments and our results could be adapted to the $2$- dimensional setting. 
\end{remark}
\end{subsection}
\end{section}

\begin{section}{The direct problem}\label{directproblem}
We begin our analysis of the direct problem by providing two results of regularity near the crack for the solution to \eqref{C} near the crack, which are collected in the Theorem below and whose proof will be provided in Section \ref{regol}.

\begin{theorem}\label{rego}
Let $u$ be a solution to \eqref{C}, then there exist constants $C>0$ and $\alpha$ with $0<\alpha<1$ depending on the a priori data only such 
\begin{eqnarray}
\|u\|_{C^{0,\alpha}(\Sigma)}\le C \ .
\end{eqnarray}
Moreover, for any $\rho \in (0,r_0)$ there exists a constant $C_{\rho}>0$ depending on $\rho$ and on the a priori data only such that
\begin{eqnarray}
\|u\|_{C^{1,\alpha}(\Sigma_{\rho})}\le C_{\rho}\ .
\end{eqnarray}
\end{theorem}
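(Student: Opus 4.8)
The plan is to derive both estimates from standard interior and boundary elliptic regularity, localized by the geometric assumptions on $\Gamma$ and $\Sigma$, together with an energy bound for $u$. First I would record that, by taking $u$ itself as a test function in the weak formulation of \eqref{C}, the quadratic form $Q_\Sigma(u)$ is controlled by the boundary datum $\varphi$, so that $\|u\|_{H^1(\Omega\setminus\Sigma)}\le C$ with $C$ depending on the a priori data and on $\|\varphi\|_{H^{1/2}(\partial\Omega)}$ (which we may regard as normalized). This is the quantitative starting point on which all the pointwise estimates will rest.

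Next I would prove the global $C^{0,\alpha}(\Sigma)$ bound. Cover a neighborhood of $\overline\Sigma$ in $\Omega\setminus\overline\Sigma$ by coordinate cylinders: away from the edge $\partial\Sigma$, each point of $\Sigma$ has a neighborhood in which $\Omega\setminus\overline\Sigma$ looks (after the $C^{1,\alpha}$ flattening of $\Gamma$) like a half-ball with the Robin condition $\partial_\nu u^\pm-\gamma^\pm u^\pm=0$ on the flat face, and there boundary Schauder/De Giorgi--Nash--Moser theory for the Robin problem gives $u^\pm\in C^{0,\alpha}$ up to the face with a bound in terms of the local $H^1$ norm and $\|\gamma^\pm\|_{C^{0,1}}$. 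Near the edge $\partial\Sigma$ one uses the description \eqref{tip}: the complement $\Omega\setminus\overline\Sigma$ near such a point is, up to a bi-Lipschitz change of variables, a ball slit along a half-hyperplane, and one can use a reflection/folding argument or directly a barrier and a Morrey-type Dirichlet-growth estimate to see that the harmonic function $u$, which is merely $H^1$ there, is in fact $C^{0,\alpha}$ up to the slit (with a possibly smaller $\alpha$ dictated by the opening of the slit domain, i.e.\ $\alpha<1/2$). Combining these local estimates by a standard covering and patching argument, using the energy bound to control each local norm, yields $\|u\|_{C^{0,\alpha}(\Sigma)}\le C$.

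For the second estimate, on $\Sigma_\rho=\{x\in\Sigma:\mathrm{dist}(x,\partial\Sigma)>\rho\}$ one is uniformly bounded away from the edge, so only the "smooth part" analysis is needed, but now bootstrapped: since $\gamma^\pm\in C^{0,1}(\Sigma)$ and $\Gamma$ is $C^{1,\alpha}$, the flattened Robin problem has $C^{1,\alpha}$ coefficients and boundary, so Schauder estimates for oblique/Robin boundary value problems upgrade $u^\pm$ to $C^{1,\alpha}$ up to $\Sigma$, with constant depending on $\rho$ through the size of the admissible coordinate cylinders and the distance to $\partial\Sigma$. A finite covering of the compact set $\overline{\Sigma_\rho}$ then gives $\|u\|_{C^{1,\alpha}(\Sigma_\rho)}\le C_\rho$.

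The main obstacle is the edge behavior near $\partial\Sigma$ in the first estimate: there the domain $\Omega\setminus\overline\Sigma$ is genuinely non-Lipschitz (a slit domain), classical up-to-the-boundary regularity does not apply directly, and one must exploit the specific geometry \eqref{tip} to run a reflection argument across $\Gamma\setminus\Sigma$ (turning the mixed Dirichlet--Robin--slit problem locally into a transmission-type problem on a nicer domain) and then invoke Hölder continuity for solutions with bounded Dirichlet energy. Controlling the Hölder exponent $\alpha$ uniformly in terms of the a priori data, and making sure the reflection is compatible with the (possibly different) impedances $\gamma^+,\gamma^-$ and with the $C^{1,\alpha}$-regularity of $\Gamma$, is the delicate point; everything else is a routine localization plus interior/boundary Schauder and energy estimates.
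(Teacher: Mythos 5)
Your treatment of the smooth part of $\Sigma$ is essentially the paper's: away from $\partial\Sigma$ the paper also relies on Moser-iteration/De Giorgi arguments combined with classical regularity bounds for the Neumann--Robin problem (after flattening $\Gamma$ with the $C^{1,\alpha}$ graph), which gives the $C^{0,\alpha}$ bound there and, on $\Sigma_\rho$, the $C^{1,\alpha}$ bound; your covering/localization scheme and the energy bound are exactly the intended quantitative backbone. The genuine gap is at the crack edge, which you correctly single out as the delicate point but then leave at the level of alternatives (``reflection/folding argument or \ldots{} a barrier and a Morrey-type Dirichlet-growth estimate'') without carrying either out. Moreover, your one concrete suggestion --- reflecting across $\Gamma\setminus\Sigma$ so as to obtain a transmission-type problem --- does not address the difficulty: $u$ is already harmonic across $\Gamma\setminus\Sigma$ (there is no jump there), and a reflection across that face leaves the slit and its tip exactly as singular as before; the obstruction is the non-Lipschitz slit geometry at $\partial\Sigma$, not the interface $\Gamma\setminus\Sigma$.

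What the paper actually does at the edge is a two-step change of variables. First, using both graphs $\varphi$ (for $\Gamma$) and $\psi$ (for $\partial\Sigma$ within $\Gamma$, assumption \eqref{tip}), it builds a $C^{1,\alpha}$ diffeomorphism $\hat{\Phi}$ after which the crack is the flat half-hyperplane $\{y_n=0,\ y_{n-1}<0\}$ and the equation becomes divergence form with bounded ($C^{0,\alpha}$) coefficients and Robin conditions on the two faces of the slit. Second --- and this is the device missing from your proposal --- it applies the bi-Lipschitz half-angle map $(y_{n-1},y_n)=(r\cos\theta,r\sin\theta)\mapsto(z_{n-1},z_n)=(r\cos\frac{\theta}{2},r\sin\frac{\theta}{2})$, which opens the slit ball onto a half ball, sending the two sides of the slit onto the flat boundary; the two Robin conditions turn into a single Robin condition with a bounded coefficient $\bar{\gamma}$, while the transformed coefficient matrix is merely bounded measurable, so De Giorgi--Nash--Moser theory up to the flat boundary yields $C^{0,\alpha}$ for the transformed function, and the bound pulls back to $u$ near $\partial\Sigma$ because both maps are Lipschitz with Lipschitz inverses and constants controlled by the a priori data. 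Without this unfolding (or an equally explicit substitute, e.g.\ a fully worked-out Campanato-growth argument at the tip that accommodates the two possibly different impedances $\gamma^{\pm}$), the global $C^{0,\alpha}(\Sigma)$ estimate --- the actual content of the first inequality of the theorem --- remains unproved in your write-up.
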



As next step, in the preliminary direct problem treatment we derive an integration by parts formula for solutions to the crack problem at hand \eqref{C}.

Let $\Gamma_i,\ i=1,2$ be two closed connected orientable hypersurfaces of class $C^{1,\alpha}$ as in Section \ref{assumptions}. Just for simplicity of exposition we assume that they are diffeomorphic to a sphere. 

By the Jordan separation theorem, $\Gamma_i$ disconnects $\mathbb{R}^n$ into two connected components $\Omega_i^-, U_i$, the first one being bounded and the second one unbounded. Being $\partial \Omega$ connected and $\Omega$ bounded, we have $\Omega_i^-\subset\subset \Omega$ and $\partial \Omega\subset U_i$. We denote $\Omega^+_i=U_i\cap \Omega$. Denote $\nu_i$ the unit normal on $\Gamma_i$ pointing to its exterior $\Omega_i^+$. Furthermore, the exterior normal to $\partial \Omega$ will be denoted by $\nu_{e}$ (or simply $\nu$). 

Let $v\in H^1(\Omega\setminus\Gamma_i)$. We denote $t_i^{\pm}v $ the $H^{\frac{1}{2}}$ traces of $v$ on the two sides of $\Gamma_i$. Namely, $t_i^{\pm}$ is the trace on $\Gamma_i$ of $v|_{\Omega_i^{\pm}}$. 

We shall introduce also the jump of the traces on $\Gamma_i$ as follows 
\begin{eqnarray}\label{jump1}
[v]_i= t_{i}^+v - t_{i}^-v .
\end{eqnarray}

If, in addition, we have $\Delta v \in L^2(\Omega\setminus\Gamma_i)$ then also the one-sided normal derivatives $\partial^+_{\nu_i}v, \partial^-_{\nu_i}v, \partial_{\nu_e}v$ are defined in the distributional sense. 

We also define 
\begin{eqnarray}\label{jump2}
[\partial_{\nu}v]_i= \partial^+_{\nu_i}v - \partial^-_{\nu_i}v .
\end{eqnarray}

\begin{theorem}\label{intbyparts}{(\bf{The integration by parts formula})}

Let $u_i\in H^1(\Omega\setminus\overline{\Sigma}_i)$ be the solution to the problem \eqref{C} with $\Sigma=\Sigma_i \ i=1,2$. Then, the following identity holds
\begin{eqnarray}\label{intparts}
&&\int_{\partial\Omega}(\Lambda_1 -\Lambda_2)u_1 u_2 d\sigma= \int_{\Sigma_1\setminus\Sigma_2}(u_2[\partial_{\nu_1}u_1]_1 - [u_1]_1\partial_{\nu_1} u_2) d\sigma +\\
&& + \int_{\Sigma_2\setminus\Sigma_1} ([u_2]_2\partial_{\nu_2}u_1- u_1[\partial_{\nu_2}u_2]_2)d\sigma + \int_{\Sigma_1\cap\Sigma_2}([u_2\partial_{\nu_1}u_1]_1 - [u_1\partial_{\nu_2}u_2]_2) d\sigma . \nonumber
\end{eqnarray} 
\end{theorem}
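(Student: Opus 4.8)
The plan is to derive the identity by a careful application of Green's formula on the set $\Omega\setminus(\Sigma_1\cup\Sigma_2)$, being mindful that this set may be irregular near $\partial\Sigma_1\cup\partial\Sigma_2$. First I would work on a reduced domain: for $\rho>0$ small, replace $\Omega\setminus(\Sigma_1\cup\Sigma_2)$ by $\Omega\setminus(\Sigma_1^\rho\cup\Sigma_2^\rho)$ — equivalently, excise tubular neighborhoods $\mathcal{U}^1_\rho\cup\mathcal{U}^2_\rho$ of the crack edges — on which the boundary is Lipschitz and both $u_1,u_2$ are in $H^1$ with $L^2$ Laplacian (here one quietly uses that $u_i$ is harmonic in $\Omega\setminus\overline\Sigma_i$, so it is in any case smooth across $\Sigma_j$ for the part of $\Sigma_j$ lying outside $\Sigma_i$). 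On this reduced domain Green's second identity
\[
\int_{\partial(\Omega\setminus(\Sigma_1^\rho\cup\Sigma_2^\rho))}\big(u_2\,\partial_\nu u_1 - u_1\,\partial_\nu u_2\big)\,d\sigma = \int_{\Omega\setminus(\Sigma_1^\rho\cup\Sigma_2^\rho)}\big(u_2\,\Delta u_1 - u_1\,\Delta u_2\big) = 0
\]
holds by standard Lipschitz-domain theory. The boundary consists of $\partial\Omega$, the two sides of $\Sigma_1^\rho$ and of $\Sigma_2^\rho$, and the ``cylindrical'' pieces near the edges contributed by the excision.

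Next I would identify each boundary contribution. On $\partial\Omega$ one gets $\int_{\partial\Omega}(u_2\partial_\nu u_1 - u_1\partial_\nu u_2)\,d\sigma$, and since $u_i|_{\partial\Omega}=\varphi_i$ and $\partial_\nu u_i|_{\partial\Omega}=\Lambda_i\varphi_i$, by self-adjointness of $\Lambda_i$ (noted after Definition \ref{DtN}) this equals $\int_{\partial\Omega}(\Lambda_1-\Lambda_2)u_1u_2\,d\sigma$, the left-hand side of \eqref{intparts}. On the portion of $\Sigma_1^\rho$ outside $\Sigma_2$, the two sides contribute $-\int(u_2[\partial_{\nu_1}u_1]_1 - [u_1]_1\,t_1^{\mp}\partial_{\nu_1}u_2)$; since $u_2$ is harmonic across that part of $\Sigma_1$, one has no jump in $u_2$ or its normal derivative there, which gives exactly the term $\int_{\Sigma_1\setminus\Sigma_2}(u_2[\partial_{\nu_1}u_1]_1 - [u_1]_1\partial_{\nu_1}u_2)\,d\sigma$, and symmetrically on $\Sigma_2\setminus\Sigma_1$. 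On $\Sigma_1\cap\Sigma_2$ — where both functions may jump — I would pair the $+$ side with $\Sigma_1$'s orientation and the $-$ side with $\Sigma_2$'s, collecting $\int_{\Sigma_1\cap\Sigma_2}([u_2\partial_{\nu_1}u_1]_1 - [u_1\partial_{\nu_2}u_2]_2)\,d\sigma$; here one should be careful, since on the overlap $\Sigma_1$ and $\Sigma_2$ need only coincide as sets and the two chosen orientations $\nu_1,\nu_2$ may agree or disagree, so the bookkeeping of signs is where I expect to spend the most care. Importantly, the Robin conditions $\partial_{\nu_i^\pm}u_i^\pm = \gamma_i^\pm u_i^\pm$ are \emph{not} invoked to simplify these crack integrals — they are kept as written — but they are implicitly what makes $u_i$ well-defined; one subtlety is whether cross terms like $\gamma$-weighted products cancel, and indeed on $\Sigma_1\cap\Sigma_2$ the terms $[u_2\partial_{\nu_1}u_1]_1$ expand using the Robin condition for $u_1$ but the jump still survives because $u_2$ need not satisfy $u_1$'s Robin condition.

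Finally I would pass to the limit $\rho\to 0^+$. This is the genuine analytic obstacle and the reason Theorem \ref{rego} is invoked: one must show the cylindrical edge-contributions vanish and that the crack integrals over $\Sigma_i^\rho$ converge to those over $\Sigma_i$. The edge terms are integrals over sets of $(n-1)$-dimensional measure $O(\rho)$ (a tube of radius $\rho$ around the $(n-2)$-dimensional edge) of quantities of the form $u_j\,\partial_\nu u_i$; by Theorem \ref{rego}, $u_j$ is bounded (indeed $C^{0,\alpha}$) up to $\Sigma$, while $\nabla u_i$ may blow up near $\partial\Sigma_i$ but, by the interior $C^{1,\alpha}$ bound on $\Sigma_\rho$ together with the expected edge singularity of order $r^{-1/2}$ (the classical crack-tip behavior, controlled via the regularity analysis of Section \ref{regol}), one has $|\nabla u_i| \lesssim \mathrm{dist}(\cdot,\partial\Sigma_i)^{-1/2}$, so the edge integral is $O(\rho\cdot\rho^{-1/2}) = O(\rho^{1/2}) \to 0$. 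For the convergence of the crack integrals over $\Sigma_i\setminus\Sigma_i^\rho$ (the part being added back as $\rho\to0$) one argues the same way: the integrand is absolutely integrable on $\Sigma_i$ because the singular factor $r^{-1/2}$ is integrable against the $(n-2)$-dimensional edge, so dominated convergence applies. Assembling the surviving terms yields precisely \eqref{intparts}.
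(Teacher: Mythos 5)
Your overall skeleton (cut away a neighborhood of the crack edges, apply Green's formula, pass to the limit using the regularity of Theorem \ref{rego}, and finish with self-adjointness of $\Lambda$) is the same as the paper's, which implements the limit with a cutoff $\varphi_m$ of uniformly bounded Dirichlet energy rather than a literal excision. However, there are two genuine gaps. The first is the sentence ``On this reduced domain Green's second identity holds by standard Lipschitz-domain theory.'' Removing tubular neighborhoods of $\partial\Sigma_1\cup\partial\Sigma_2$ does not make $\Omega\setminus(\Sigma_1\cup\Sigma_2)$ Lipschitz: the two surfaces $\Gamma_1,\Gamma_2$ containing the cracks may touch or cross \emph{tangentially}, so the region trapped between them is cusp-like and possibly badly disconnected, and $\Sigma_1\cap\Sigma_2$ need not be a nice set. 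This is precisely the difficulty the paper singles out, and it is resolved there by a separate lemma (Lemma \ref{divform}): a partition of unity, a dichotomy between transversal and tangential intersection of the tangent planes, and, in the tangential case, the observation that the in-between region is a normal domain between Lipschitz graphs on which the divergence theorem still holds. Your proposal has no substitute for this step, and without it the very first display of your argument is unjustified.

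The second gap is in the limit $\rho\to 0$. You only control the singularity of $\nabla u_i$ near its \emph{own} edge $\partial\Sigma_i$ as seen from $\Sigma_i$, but the crack integrals in \eqref{intparts} also involve, e.g., $\partial_{\nu_2}u_1$ on $\Sigma_2\setminus\Sigma_1$ and the products on $\Sigma_1\cap\Sigma_2$; there $\nabla u_1$ blows up near $\partial\Sigma_1$, which is not contained in the surface you are integrating over, and the sublevel sets $\{x\in\Sigma_2:\ \mathrm{dist}(x,\partial\Sigma_1)<t\}$ can be large when $\partial\Sigma_1$ meets $\Sigma_2$ tangentially. Establishing $|\nabla u_1|\in L^1(S)$ for $S=\Gamma_2\setminus\Gamma_1$ and $S=\Gamma_1\cap\Gamma_2$ is where the paper spends most of its effort (local flattening, and in the tangential case a projection of $\partial\Sigma_1$ onto $S$ producing an $(n-2)$-manifold against which $\mathrm{dist}(\cdot,\partial\Sigma_1)^{\alpha-1}$ is integrable); your dominated-convergence sentence does not engage with this geometry. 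A smaller point: the $r^{-1/2}$ crack-tip rate you invoke is not what Section \ref{regol} provides; Theorem \ref{rego} gives $C^{0,\alpha}$ up to the edge for some $\alpha$ depending on the a priori data, hence only $|\nabla u_i|\lesssim \mathrm{dist}(\cdot,\partial\Sigma_i)^{\alpha-1}$. That weaker rate still makes your edge terms vanish (they are $O(\rho^{\alpha})$), so this is an unjustified but harmless overstatement, unlike the two gaps above.
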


The proof shall be given in Section \ref{sectionintbyparts}.
 
\begin{remark}
Note that the integral on the left hand side of \eqref{intparts} should  be properly interpreted as $<(\Lambda_1-\Lambda_2)u_1,u_2>$. Also , if $\eta_i=\Lambda_i u_i, \ i=1,2$, we also have that the left hand side can be written as $<\eta_1, (N_2-N_1)\eta_2>$ .

\end{remark}

\end{section}
\begin{section}{Singular solutions}\label{singularsolutions}
In this section we shall discuss and state the upper bound and the lower bound for the function $f$ introduced in \eqref{effe} and we shall obtain our main result as a combination of the two latter bounds.

We begin by introducing the so called Robin function. 

Fix $\widetilde{\Omega}$ such that $\Omega \subset\subset \widetilde{\Omega}$, we shall denote with $R$ the Robin function (or Green's function of third kind) associated to the problem \eqref{C}.

\begin{equation}\label{R}
\left\{
\begin{array}
{lcl}
\Delta_x R(x,y)=- \delta(x-y)\ ,&& \mbox{in $\widetilde{\Omega}\setminus\Sigma$ ,}
\\
\partial_{\nu^{\pm}}R^+(\cdot, y) - \gamma^{\pm}(\cdot) R^{\pm}(\cdot, y) =0 \ , && \mbox{on $\Sigma^{\pm}$ ,}
\\
\partial_{\nu}R(\cdot,y)= - \frac{1}{|\partial \widetilde{\Omega}|}\ ,   && \mbox{on $\partial \widetilde{\Omega}$ ,}
\end{array}
\right.
\end{equation}
with $y\in \widetilde{\Omega}\setminus \Sigma$.

We shall denote with $R_1$ and $R_2$ the Robin functions solutions to \eqref{R} when $\Sigma$ is replaced by $\Sigma_1$ and $\Sigma_2$ respectively. 

Let us now define, for $y,w\in \widetilde{\Omega}\setminus \Sigma$ 
\begin{eqnarray}
S_{\Sigma_1}(y,w)=&&\int_{\Sigma_1\setminus\Sigma_2}\left(R_2(\cdot,w)[\partial_{\nu_1}R_1(\cdot,y)]_1 - [R_1(\cdot,y)]_1\partial_{\nu_1} R_2(\cdot,w)\right) d\sigma +\nonumber\\
&&+ \int_{\Sigma_1\cap\Sigma_2}[R_2(\cdot,w)\partial_{\nu_1}R_1(\cdot,y)]_1  d\sigma 
\end{eqnarray}
\begin{eqnarray}
S_{\Sigma_2}(y,w)=&&\int_{\Sigma_2\setminus\Sigma_1}\left(R_1(\cdot,y)[\partial_{\nu_2}R_2(\cdot,w)]_2 - [R_2(\cdot,w)]_2\partial_{\nu_2} R_1(\cdot,y)\right) d\sigma +\nonumber\\
&&+ \int_{\Sigma_1\cap\Sigma_2}[R_1(\cdot,y)\partial_{\nu_2}R_2(\cdot,w)]_2  d\sigma 
\end{eqnarray}
note that clearly we have 
\begin{eqnarray}\label{eg}
f(y,w)= S_{\Sigma_1}(y,w) -  S_{\Sigma_2}(y,w) \ .
\end{eqnarray}
By Theorem \ref{intbyparts} we have that for every $y,w\in \widetilde{\Omega}\setminus \overline{\Omega}$ 
\begin{eqnarray}\label{eguaglianzafond}
f(y,w)= \int_{\partial \Omega} (\Lambda_1 -\Lambda_2)R_1(\cdot,y)R_2(\cdot,w)d\sigma \ .
\end{eqnarray}

\begin{subsection}{Upper bound on the function f}
Given $A,l>0$ we consider the cone 
\begin{eqnarray}
C_l=\{x=(x',x_n): 0<x_n<Al\ , |x'|+\frac{x_n}{A}<l  \}
\end{eqnarray}
and for any orthogonal transformation $R$ and any point $z$, we denote with 
\begin{eqnarray}
RC_l(z)=RC_l + z \ ,
\end{eqnarray}
the rotated cone whose basis is centered in $z$.

Given $\gamma :[0,1]\rightarrow \Omega \cup \Omega_r$ a simple arc, we define the following set 
\begin{eqnarray}
\gamma^l=\bigcup_{\ t\in [0,1]} B_l(\gamma(t))\cup RC_l(\gamma(1)) \ .
\end{eqnarray}
Denoting with $P(\gamma^l)$ the vertex of the cone $RC_l(\gamma(1))$ and given $0<r<r_0$, we set 
\begin{eqnarray}\label{Vl}
V_l=\{P(\gamma^l) : \gamma(0)\in \Gamma_r, \gamma^l\cap (\Sigma_1\cup\Sigma_2)=\emptyset \} \ .
\end{eqnarray} 
\begin{lemma}\label{claim1a}
There exist $d_0,l_0>0$ such that if $d_{H}(\Sigma_1,\Sigma_2)\le d_0$ and $l\le l_0$ then 
\begin{eqnarray}
\Sigma_1 \cup\Sigma_2\subset \partial V_l \ .
\end{eqnarray}
\end{lemma}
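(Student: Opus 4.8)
\textbf{Proof plan for Lemma \ref{claim1a}.}

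The plan is to show that every point $x \in \Sigma_1 \cup \Sigma_2$ lies in the closure of $V_l$ but not in its interior, by constructing, for $d_H(\Sigma_1,\Sigma_2)$ and $l$ small enough, admissible arcs $\gamma$ from $\Gamma_r$ whose associated vertices $P(\gamma^l)$ cluster arbitrarily close to $x$. First I would fix a point $x_0 \in \Sigma_1 \cup \Sigma_2$; without loss of generality say $x_0 \in \Sigma_1$. Since $\Sigma_1$ is compactly contained in $\Omega$ and $\partial\Omega$ is connected with $\Omega$ of Lipschitz class, the open set $\Omega \setminus \Sigma_1$ is connected and $\Gamma_r$ (for $r$ as in \eqref{Vl}) lies in the unbounded component's neighborhood of $\Omega$; one can join a point of $\Gamma_r$ to a point close to $x_0$ by a simple arc staying in $\Omega \cup \Omega_r$ and avoiding $\overline{\Sigma}_1$. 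The quantitative point is that, using the $C^{1,\alpha}$ regularity of the ambient hypersurface $\Gamma \supset \Sigma_1$ together with the cone condition it induces, one can route the terminal portion of $\gamma$ so that the tube $\gamma^l$ together with the terminal cone $RC_l(\gamma(1))$ stays off $\Sigma_1 \cup \Sigma_2$ while the vertex $P(\gamma^l)$ can be taken within distance $O(l)$ of $x_0$: here the orientation $R$ of the cone is chosen to point "into" the region on one side of $\Gamma$, exploiting that near $x_0$ the surface $\Sigma_1$ looks flat up to order $\alpha$ so that a thin cone on one side misses it, and the smallness of $d_H(\Sigma_1,\Sigma_2)$ guarantees that $\Sigma_2$ is contained in an $O(d_0)$-neighborhood of $\Sigma_1$ and hence is also missed once $d_0 \ll l$. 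Letting $l \to 0$ along such constructions gives $x_0 \in \overline{V_l}$, and since the same construction works for an open neighborhood of directions/basepoints, one checks that $x_0$ is a boundary point: approaching $x_0$ from the "other side" of $\Gamma$ is obstructed by $\Sigma_1$ itself, so $x_0 \notin \mathrm{int}\, V_l$. Combining, $\Sigma_1 \cup \Sigma_2 \subset \partial V_l$.

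More concretely, I would proceed in the following steps. Step 1: record the geometric consequences of the a priori assumptions — $\Omega \setminus (\Sigma_1 \cup \Sigma_2)$ contains a connected "collar" reaching from $\Gamma_r$ to within any prescribed small distance of any point of $\Sigma_1 \triangle \Sigma_2$ and, more delicately, of points in $\Sigma_1 \cap \Sigma_2$ approached from one side; quantify this with a uniform lower bound $l_0$ on the admissible tube radius, depending only on $r_0, M, \alpha, D$ and on $r$. Step 2: for a fixed $x_0 \in \Sigma_1$, build the arc $\gamma$ explicitly: follow (the normal exterior direction to) $\Gamma_1$ or a fixed smooth vector field transversal to $\Gamma$ from a point of $\Gamma_r$ toward $x_0$, stopping at $\gamma(1)$ at distance $\sim A l$ from $x_0$ on the chosen side; attach the cone $RC_l(\gamma(1))$ with axis along that same transversal direction so that its vertex $P(\gamma^l)$ sits at (or infinitesimally past) $x_0$. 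Step 3: verify $\gamma^l \cap (\Sigma_1 \cup \Sigma_2) = \emptyset$: the tube part stays in the collar by Step 1 for $l \le l_0$; the cone part, by the flatness estimate $|\varphi(x') - \varphi(0) - \nabla\varphi(0)\cdot x'| \le C|x'|^{1+\alpha}$ for the graph of $\Sigma_1$ near $x_0$, lies strictly on one side of $\Sigma_1$ provided the cone aperture $A$ is chosen large (so the cone is "thin") and $l$ small; the same estimate plus $d_H(\Sigma_1,\Sigma_2) \le d_0$ with $d_0 = d_0(l)$ small forces the cone to miss $\Sigma_2$ as well, since $\Sigma_2 \subset \{ \mathrm{dist}(\cdot, \Sigma_1) \le d_0 \}$. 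Step 4: conclude $P(\gamma^l) \in V_l$ and $\mathrm{dist}(P(\gamma^l), x_0) \to 0$ as $l \to 0$, hence $x_0 \in \overline{V_l}$; and argue $x_0 \notin \mathrm{int}\, V_l$ because any ball around $x_0$ contains points on the opposite side of $\Sigma_1$ which cannot be of the form $P(\gamma^l)$ with $\gamma^l$ avoiding $\Sigma_1$ — since reaching them would force $\gamma$ to cross $\Sigma_1$, contradicting $\gamma^l \cap (\Sigma_1\cup\Sigma_2) = \emptyset$. Then $x_0 \in \partial V_l$.

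I expect the main obstacle to be Step 3 in the case $x_0 \in \Sigma_1 \cap \Sigma_2$ (or $x_0$ near $\partial\Sigma_i$), where the tube and especially the terminal cone must be threaded between \emph{both} surfaces, which near common points are $O(d_0)$-close. The resolution is to decouple the two smallness parameters: first fix $l \le l_0$ so that the cone $RC_l$, by the $C^{1,\alpha}$ flatness of $\Gamma$, stays in a one-sided region $\{ x_n > \varphi(x') + \tfrac12 A^{-1} l \}$ say, bounded away from the graph of $\Sigma_1$ by a definite amount $\sim l$ depending only on $A, \alpha$ and the $C^{1,\alpha}$ constants; then choose $d_0 = d_0(l) < c\, l$ so that the $d_0$-neighborhood of $\Sigma_1$ — which contains $\Sigma_2$ — still does not meet that one-sided region. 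A secondary technical nuisance is handling basepoints $x_0$ on the "inner" face of $\Gamma$ (the side facing $\Omega_i^-$): there one routes $\gamma$ through $\Omega_i^+$ around toward that face, still possible because $\Gamma$ is diffeomorphic to a sphere and $\Sigma_i \subsetneq \Gamma$ leaves room, but the uniform radius bound $l_0$ must absorb the worst-case length and curvature of such detours, which is where the diameter bound $D$ and the connectedness of $\partial\Omega$ enter. None of these steps requires new machinery beyond the stated a priori regularity; the content is in organizing the order of quantifier choices ($A$, then $l_0$, then $d_0$) so the constants depend only on the a priori data.
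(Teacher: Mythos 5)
There is a genuine gap, and it sits exactly where you predicted the difficulty would be: the claim that the terminal cone ``is also missed [by $\Sigma_2$] once $d_0\ll l$''. The cone you attach has its vertex at (or arbitrarily close to) the point $x_0\in\Sigma_1$, so it is \emph{not} separated from $\Sigma_1$ by any definite amount $\sim l$; near the vertex the gap between the cone and the graph of $\Sigma_1$ tends to zero. Consequently, knowing only that $\Sigma_2$ lies in a $d_0$-neighborhood of $\Sigma_1$ gives nothing there: if $\Sigma_2$ happens to lie on the chosen side of $\Sigma_1$ near $x_0$ (say $\varphi_2>\varphi_1$ locally, with gap comparable to $d_H(\Sigma_1,\Sigma_2)$, which is a fixed positive number for the given pair of cracks), then every cone opening to that side whose vertex is within distance $\approx d_0$ of $x_0$ meets $\Sigma_2$, no matter how small $d_0$ is compared to $l$. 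So either your construction is inadmissible, or the admissible vertices stay at distance $\gtrsim d_0$ from $x_0$ and you do not obtain $x_0\in\overline{V_l}$; your proposed fix ($d_0=d_0(l)<c\,l$) does not repair this, and in addition it inverts the quantifiers of the statement, where $d_0$ and $l_0$ are fixed constants (depending only on the a priori data) and the conclusion must hold for every $l\le l_0$ — this uniformity is needed later, e.g.\ when $V_{l_1/2}$ is used. Your argument that $x_0\notin\mathrm{int}\,V_l$ (``the other side cannot be reached'') is also shaky, since points on the other side may be reachable by arcs going around $\partial\Sigma_1$ or through $\Gamma_1\setminus\Sigma_1$; the exclusion from the interior really comes from the definition of $\gamma^l$ itself (the tube/cone must avoid the cracks, so vertices cannot be too deep inside a neighborhood of $\Sigma_1\cup\Sigma_2$), not from a topological blocking argument.

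The paper's proof resolves precisely the point you miss. Using the a priori $C^{1,\alpha}$ bounds and the closeness $d_H(\Sigma_1,\Sigma_2)\le d_0$ (via an argument of ABRV type), one first shows that near any $P\in\overline{\Sigma_1}$ the two surfaces are \emph{simultaneously} graphs $\varphi_1,\varphi_2$ over a common reference frame, with $\|\varphi_1-\varphi_2\|_{C^{1,\alpha/2}}$ controlled by $d_0$, and — this is the key — that $\max\{\varphi_1,\varphi_2\}$ and $\min\{\varphi_1,\varphi_2\}$ are Lipschitz with a constant $L$ bounded in terms of the a priori data. Then the side toward which the cone opens is chosen according to the \emph{relative position of the two cracks at} $P$: if $P$ lies on the graph of $\max\{\varphi_1,\varphi_2\}$ one opens toward that side, otherwise (graph of the min) toward the opposite side, with aperture dictated by $L$. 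Since both graphs lie on one side of the chosen envelope and the envelope passes through the vertex with Lipschitz constant $L$, the cone avoids \emph{both} cracks even right at the vertex, and $d_0,l_0$ come out independent of $l$ and of each other, depending only on the a priori data. Your one-sided construction (axis fixed by the normal to $\Gamma$, independently of which crack is locally higher) cannot achieve this; the side selection via the max/min envelope is the missing idea.
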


This Lemma will be proved in Section \ref{properr}.

We shall use a variation of the Hausdorff distance which we call $l-$distance. 
\begin{definition}\label{dm}
We define the \emph{$l-$distance} $d_l$ between $\Sigma_1$ and $\Sigma_2$ as follows
\begin{eqnarray}
d_l(\Sigma_1,\Sigma_2)=\max \left\{\sup_{\substack {x\in \Sigma_1\cap \partial V_l}}\mbox{dist}(x,\Sigma_2)\ , \sup_{\substack {x\in \Sigma_2\cap \partial V_l}}\mbox{dist}(x,\Sigma_1)\  \right \}.
\end{eqnarray}

Here, $\sup_{\substack {x\in \Sigma_1\cap \partial V_l}}\mbox{dist}(x,\Sigma_2)$ is understood to be $0$ if $\Sigma_1\cap\partial V_{l}=\emptyset$ and analogously for $\sup_{\substack {x\in \Sigma_2\cap \partial V_l}}\mbox{dist}(x,\Sigma_1)$.

\end{definition}

\begin{lemma}\label{claim2}
There exist $d_1,l_1,C>0$ satisfying $0<d_1\le d_0, \ 0<l_1\le l_0$ such that, if $d_H(\Sigma_1,\Sigma_2)>d_1$ then 
\begin{eqnarray}
d_{l_1}(\Sigma_1,\Sigma_2)\ge C >0 \ .
\end{eqnarray}
\end{lemma}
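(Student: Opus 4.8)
The plan is to argue by contradiction. Suppose that no such constants exist; then there are sequences $\Sigma_1^{(k)}, \Sigma_2^{(k)}$ satisfying the a priori assumptions, together with $l_k \to 0$ (which we may always arrange, since decreasing $l$ only shrinks $\partial V_l$), such that $d_H(\Sigma_1^{(k)}, \Sigma_2^{(k)}) > d_1$ for every fixed $d_1 \in (0, d_0]$, while $d_{l_k}(\Sigma_1^{(k)}, \Sigma_2^{(k)}) \to 0$. The point is that the Hausdorff distance is controlled \emph{by definition} by the full symmetric difference $\Sigma_1 \triangle \Sigma_2$, whereas the $l$-distance only sees the points of $\Sigma_i$ lying on $\partial V_l$, i.e.\ the points reachable from $\Gamma_r$ by a tube-plus-cone avoiding $\Sigma_1 \cup \Sigma_2$. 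So what has to be shown is exactly that, under the $C^{1,\alpha}$ a priori regularity, \emph{every} point of one crack that is far from the other crack \emph{is} reachable in this quantitative sense for all sufficiently small $l$ — this is the content promised in the introduction as ``the Hausdorff distance is dominated by the respective $l$-distance'' (Proposition \ref{equiv}). Thus the cleanest route is to deduce Lemma \ref{claim2} as a corollary of Proposition \ref{equiv}: if $d_H \le C_1\, d_{l}$ for all $l \le l_1$ with $C_1$ depending only on the a priori data, then $d_H(\Sigma_1,\Sigma_2) > d_1$ forces $d_{l_1}(\Sigma_1,\Sigma_2) \ge d_1/C_1 =: C > 0$, giving the claim with this $C$.

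If instead one wants a self-contained argument, here are the steps I would carry out. First, pick a point $\bar x$ realizing (nearly) the Hausdorff distance, say $\bar x \in \Sigma_1$ with $\mathrm{dist}(\bar x, \Sigma_2) > d_1/2 > 0$. Since $\Sigma_1$ is an open subset of the closed hypersurface $\Gamma_1$ which is $C^{1,\alpha}$ with constants $r_0, M$ and $\partial\Sigma_1$ is itself $C^{1,\alpha}$, I can move $\bar x$ a controlled distance (depending only on $r_0, M, D, d_1$) into $\Sigma_1^{\rho}$ for a suitable $\rho = \rho(d_1,\text{a priori data})$, so that near the resulting point the crack looks like a flat-ish graph on a ball of radius comparable to $r_0$, and a fixed two-sided conical neighborhood of it is free of $\Sigma_1$. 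Second, I construct a path $\gamma$ from a point of $\Gamma_r$ to a point $z$ at controlled distance from this relocated point, staying in $\Omega \cup \Omega_r$ and keeping a fixed positive distance from $\Sigma_1 \cup \Sigma_2$: here I use that $\Omega_1^-$ is $C^{1,\alpha}$ and diffeomorphic to a ball with controlled constants, so its complement $\Omega_1^+ \cup \Omega_r$ is connected with a quantitative ``thickness'' (an interior cone condition from one side of $\Gamma_1$), and that $\Sigma_2$ is far (at least $d_1/2$) from the target, so the two cracks do not block the tube if its radius $l$ is small enough. Third, fattening $\gamma$ by balls of radius $l$ and attaching the cone $RC_l$ with vertex at $z$ oriented toward $\bar x$, I get $\gamma^l \cap (\Sigma_1 \cup \Sigma_2) = \emptyset$ with $\gamma(0) \in \Gamma_r$, so $P(\gamma^l) \in V_l$; by construction $P(\gamma^l)$ can be taken on $\partial V_l$ and within distance $\le C(l + \text{relocation distance})$ of the original $\bar x$. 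Hence for $l$ small, $\partial V_{l}$ comes within a controlled distance of $\bar x$, and since $\bar x$ was (essentially) the farthest point of $\Sigma_1$ from $\Sigma_2$, this forces $d_{l}(\Sigma_1, \Sigma_2) \ge d_1/2 - C l \ge C$ once $l \le l_1$ is chosen small.

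The main obstacle I expect is the construction in the second and third steps of the avoiding path together with the \emph{uniform} lower bound on the admissible tube radius $l_1$: one must show, using only the a priori data, that there is a definite ``channel'' from $\Gamma_r$ to a neighborhood of any far-from-$\Sigma_2$ point of $\Sigma_1$ which stays uniformly away from both cracks — the subtlety being that $\Sigma_1$ is only a piece of $\Gamma_1$ with its own $C^{1,\alpha}$ edge $\partial\Sigma_1$, so one must route the channel on one side of $\Gamma_1$ and it must not get pinched near $\partial\Sigma_1$ nor near $\Sigma_2$. This is exactly why the paper isolates the $l$-distance and proves Proposition \ref{equiv} separately, and I would lean on the geometric lemmas assembled there (interior cone conditions, connectedness with quantitative estimates for the complement of a $C^{1,\alpha}$ domain of controlled size) rather than reproving them inline. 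Everything else — the contradiction/compactness packaging and the bookkeeping of constants — is routine once Proposition \ref{equiv} is in hand.
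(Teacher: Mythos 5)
Your first route is not admissible in the paper's logical architecture: Proposition \ref{equiv} is itself proved as ``an immediate consequence'' of Lemma \ref{claim1a} and of the very Lemma \ref{claim2} you are asked to prove (its Case 2 uses $d_{l_1}\ge C$ from Lemma \ref{claim2} together with the trivial bound $d_H\le D$). So deducing Lemma \ref{claim2} from Proposition \ref{equiv}, or from ``the geometric lemmas assembled there'', is circular. Note also that Proposition \ref{equiv} does \emph{not} encode the statement you attribute to it (``every point of one crack that is far from the other crack is reachable''); in the regime $d_H>d_0$ it only yields $d_H\le (D/C)\,d_{l_1}$ through the a priori diameter bound, precisely because reachability of the extremal point is not available.

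Your self-contained route has a genuine gap at its core: you insist on reaching (a controlled relocation of) the point $\bar x$ that realizes the Hausdorff distance, and for this you need a channel from $\Gamma_r$ to a neighborhood of $\bar x$ avoiding \emph{both} cracks with a radius bounded below by the a priori data. The fact that $\mathrm{dist}(\bar x,\Sigma_2)\ge d_1/2$ only clears the geometry locally near $\bar x$; globally, $\Sigma_2$ (possibly together with $\Sigma_1$) can seal the region containing $\bar x$ off from $\Gamma_r$ up to gaps that are not controlled by $r_0,M,D$ (e.g.\ $\Sigma_2$ can be a closed, or nearly closed, surface enclosing a pocket that contains $\bar x$ while staying at distance $d_1$ from it), so the uniform lower bound on the admissible tube radius $l_1$ that your Step 2--3 requires simply does not exist. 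The paper's proof avoids this obstruction entirely and is much weaker in what it reaches: assuming w.l.o.g.\ a point $Q\in\Sigma_2$ with $\mathrm{dist}(Q,\Sigma_1)\ge d_1$, it takes \emph{any} continuous path $\gamma$ from $\Gamma_r$ to $Q$ staying in $\{\,\mathrm{dist}(\cdot,\Sigma_1)>d_1\,\}$ (with $d_1=\min\{d_0/2,r_0/4\}$ this set is connected thanks to the definite-size hole $\Gamma_1\setminus\Sigma_1$), stops it at the \emph{first} time $\bar t$ it comes within $l_1$ of $\Sigma_2$, and picks $P'\in\Sigma_2$ with $|P'-\gamma(\bar t)|\le l_1$. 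The truncated path, fattened and capped by the cone, avoids $\Sigma_1\cup\Sigma_2$ by construction, so $P'\in\partial V_{l_1}$, while $\mathrm{dist}(P',\Sigma_1)\ge d_1-l_1$; hence $d_{l_1}(\Sigma_1,\Sigma_2)\ge d_1-l_1=:C$. The reachable witness $P'$ may be nowhere near the Hausdorff-extremal point, which is exactly why the lemma claims only a constant lower bound on $d_{l_1}$ rather than a bound proportional to $d_H$ --- and why your stronger accessibility claim is both unnecessary and unprovable from the a priori assumptions.
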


See Section \ref{properr} for a proof of this Lemma. 

\begin{proposition}\label{equiv}
Let $\Omega, \Sigma_1,\Sigma_2$ be the domain and the cracks satisfying the a-priori assumptions stated above and let $l_1>0$ be the quantity introduced in Lemma \ref{claim1}.
Then, there exists a constant $C_1>0$ such that
\begin{eqnarray}
 d_{H}(\Sigma_1,\Sigma_2)\le C_1 d_{l_1}(\Sigma_1, \Sigma_2).
 \end{eqnarray}
\end{proposition}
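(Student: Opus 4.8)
\textbf{Proof proposal for Proposition \ref{equiv}.}

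The plan is to show that the full Hausdorff distance $d_H(\Sigma_1,\Sigma_2)$ cannot exceed the $l_1$-distance $d_{l_1}(\Sigma_1,\Sigma_2)$ by more than a fixed multiplicative constant depending only on the a priori data. I would split into two regimes according to the size of $d_H(\Sigma_1,\Sigma_2)$. If $d_H(\Sigma_1,\Sigma_2)>d_1$, then Lemma \ref{claim2} immediately gives $d_{l_1}(\Sigma_1,\Sigma_2)\ge C>0$, so that
\[
d_H(\Sigma_1,\Sigma_2)\le D \le \frac{D}{C}\, d_{l_1}(\Sigma_1,\Sigma_2),
\]
using the a priori diameter bound \eqref{diametro}; this settles the "large distance" case with $C_1\ge D/C$. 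Hence the substantive case is $d_H(\Sigma_1,\Sigma_2)\le d_1\le d_0$, where Lemma \ref{claim1a} applies and guarantees $\Sigma_1\cup\Sigma_2\subset\partial V_{l_1}$.

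In that regime the argument is essentially geometric. Pick $\bar x\in\Sigma_1\cup\Sigma_2$ realizing (up to an arbitrarily small error) the Hausdorff distance, say $\bar x\in\Sigma_1$ with $\mathrm{dist}(\bar x,\Sigma_2)=d_H(\Sigma_1,\Sigma_2)$. Since $\bar x\in\Sigma_1\subset\partial V_{l_1}$, if additionally $\bar x\in\Sigma_1\cap\partial V_{l_1}$ realized things exactly we would be done with $C_1=1$; the point, however, is that $\bar x$ need not be "optimally reachable", so one must compare $\mathrm{dist}(\bar x,\Sigma_2)$ with the distance from a nearby point of $\Sigma_1\cap\partial V_{l_1}$ to $\Sigma_2$. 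The mechanism: using the $C^{1,\alpha}$ regularity of $\Sigma_1$ with constants $r_0,M$, one can connect $\bar x$ along $\Sigma_1$ (or through $\Omega$, avoiding $\Sigma_1\cup\Sigma_2$ by exploiting the smallness of $d_H$) to a point $\tilde x\in\Sigma_1$ which is a vertex $P(\gamma^{l_1})$ of an admissible cone-with-tube emanating from $\Gamma_r$, i.e. $\tilde x\in\Sigma_1\cap\partial V_{l_1}$, with $|\bar x-\tilde x|$ controlled by a constant multiple of $d_H(\Sigma_1,\Sigma_2)$ (or of $l_1$, which is itself a fixed constant — but here one needs the bound proportional to $d_H$, which is where the regularity and the flatness on scale $r_0$ enter). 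Then by the triangle inequality
\[
d_H(\Sigma_1,\Sigma_2)=\mathrm{dist}(\bar x,\Sigma_2)\le |\bar x-\tilde x|+\mathrm{dist}(\tilde x,\Sigma_2)\le |\bar x-\tilde x| + d_{l_1}(\Sigma_1,\Sigma_2),
\]
and one absorbs the term $|\bar x-\tilde x|$ using a feedback/iteration on $d_H$, or directly if $|\bar x-\tilde x|\le\tfrac12 d_H$, yielding $d_H(\Sigma_1,\Sigma_2)\le 2\,d_{l_1}(\Sigma_1,\Sigma_2)$.

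The main obstacle I expect is precisely the construction of the nearby optimally-reachable point $\tilde x$ with the quantitative estimate $|\bar x-\tilde x|\le c\,d_H(\Sigma_1,\Sigma_2)$, rather than merely $|\bar x-\tilde x|\le c\,l_1$. One has to argue that when the two cracks are Hausdorff-close, a point of $\Sigma_1$ far (in Hausdorff sense) from $\Sigma_2$ sits in a region where $\Sigma_1$ is uniformly graph-like and can be joined to $\Gamma_r$ by a tube-plus-cone of aperture and radius of order $l_1$ that misses $\Sigma_1\cup\Sigma_2$ — here the fact that $\Sigma_i\subset\Gamma_i$ with $\Gamma_i$ diffeomorphic to a sphere, together with connectedness of $\partial\Omega$ and the $C^{1,\alpha}$ bounds, is used to avoid the "wild" parts of $\Omega\setminus(\Sigma_1\cup\Sigma_2)$ and to keep all chains of balls of radius bounded below. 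Once the reachability construction is in place, the rest is the elementary two-regime triangle-inequality bookkeeping described above, and the constant $C_1$ depends only on $n,r_0,M,D,\bar\gamma$ through Lemmas \ref{claim1a} and \ref{claim2} and the cone parameters $A,l_1$.
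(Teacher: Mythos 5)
Your large-distance regime is exactly the paper's: if $d_H(\Sigma_1,\Sigma_2)>d_1$, Lemma \ref{claim2} gives $d_{l_1}(\Sigma_1,\Sigma_2)\ge C$, and the diameter bound \eqref{diametro} yields $d_H\le D\le \frac{D}{C}\,d_{l_1}$. The problem is your small-distance regime. There you correctly invoke Lemma \ref{claim1a} to get $\Sigma_1\cup\Sigma_2\subset\partial V_{l_1}$, but then you claim that the near-maximizing point $\bar x\in\Sigma_1$ "need not be optimally reachable" and launch into a construction of a nearby point $\tilde x\in\Sigma_1\cap\partial V_{l_1}$ with $|\bar x-\tilde x|\le c\,d_H$, which you yourself describe as the main unresolved obstacle. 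This is a misreading of Definition \ref{dm}: the $l$-distance takes the supremum of $\mathrm{dist}(\cdot,\Sigma_2)$ over the set $\Sigma_1\cap\partial V_{l_1}$, and membership in $\partial V_{l_1}$ is the \emph{only} requirement -- there is no separate "reachability" condition on individual points beyond that. Once Lemma \ref{claim1a} is in force, $\Sigma_i\cap\partial V_{l_1}=\Sigma_i$ for $i=1,2$, so the suprema defining $d_{l_1}$ range over all of $\Sigma_1$ and all of $\Sigma_2$, and hence $d_{l_1}(\Sigma_1,\Sigma_2)=d_H(\Sigma_1,\Sigma_2)$ identically in this regime; in particular your point $\bar x$ already lies in $\Sigma_1\cap\partial V_{l_1}$ and contributes directly to the supremum. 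This is precisely how the paper handles Case 1, in one line, after which $C_1=\max\{1,D/C\}$ closes the proof.

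So the proposal, as written, has a genuine gap in the main case (the construction of $\tilde x$ with $|\bar x-\tilde x|\le c\,d_H$ is never carried out, and making it quantitative would indeed be delicate), but the gap is self-inflicted: the statement you would need to prove is unnecessary. Replace that whole paragraph by the observation that $\Sigma_1\cup\Sigma_2\subset\partial V_{l_1}$ forces $d_{l_1}=d_H$, and your two-regime bookkeeping then reproduces the paper's argument with constant $C_1=\max\{1,D/C\}$ depending only on the a priori data.
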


This is an immediate consequence of the above two Lemmas, details can be found in Section \ref{properr}.



With no loss of generality, we can assume that there exists a point $O\in \Sigma_1\cap\partial V_{l_1}$ such that 
\begin{eqnarray}\label{puntodistanza}
d_{l_1}=d_{l_1}(\Sigma_1,\Sigma_2)=\mbox{dist}(O,\Sigma_2) \ .
\end{eqnarray}

\begin{proposition}\label{propagerrore}
Let $\Omega$ be the set in $\mathbb{R}^n$ satisfying the a-priori assumptions stated above. Let $l_1>0$ be the parameter introduced before and let $Q=P(\gamma^{\frac{l_1}{2}})\in \partial V_{\frac{l_1}{2}}$ be the vertex of the cone $RC_{\frac{l_1}{2}}(\gamma(1))$ for a given simple arc $\gamma$. 
Let $y=Q+h\widetilde{\nu}$, where $\widetilde{\nu}$ is the $RC_{\frac{l_1}{2}}(\gamma(1))$ cone axis unit vector.

If, given $\eps>0$, we have 
\begin{eqnarray}
\|\Lambda_1 -\Lambda_2 \|_{\mathcal{L}(H^{\frac{1}{2}}(\partial \Omega), H^{-\frac{1}{2}}(\partial \Omega))}\le \eps
\end{eqnarray}
then for every $0<h<\bar{h}$, we have that 
\begin{eqnarray}\label{stimaerrore}
|f(y,y)|\le C \frac{\eps^{C'h^F}}{h^{B}}
\end{eqnarray}
where $0<B<n-2$ and $\bar{h},C,C',F>0$ are constants depending on the a-priori data only. 
\end{proposition}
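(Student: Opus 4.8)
The starting point is the fundamental identity \eqref{eguaglianzafond}, which for $y,w\in\widetilde\Omega\setminus\overline\Omega$ writes $f(y,w)$ as the boundary pairing of $(\Lambda_1-\Lambda_2)$ against the Robin functions $R_1(\cdot,y), R_2(\cdot,w)$. Hence the first step is the \emph{crude exterior bound}: for $y,w$ ranging in a compact subset of $\widetilde\Omega\setminus\overline\Omega$, say on $\Gamma_r$ or more precisely on a surface $\mathcal S_{\rho_0,\rho_0}$ at fixed distance from $\partial\Omega$, the traces $R_i(\cdot,y)|_{\partial\Omega}$ are uniformly bounded in $H^{1/2}(\partial\Omega)$ (this follows from standard elliptic estimates for \eqref{R}, since the pole stays at positive distance from $\partial\Omega$ and from $\Sigma_i$, using that $\Sigma_i$ lies inside $\Gamma_i\subset\subset\Omega$). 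Consequently $|f(y,w)|\le C\eps$ for all such $y,w$, with $C$ depending only on the a priori data. I would record this as the ``seed'' estimate for the propagation argument.

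\textbf{Propagation of smallness along a chain.} Next, fix the arc $\gamma$ and the cone $RC_{l_1/2}(\gamma(1))$ as in the statement, with vertex $Q$. By the very definition \eqref{Vl} of $V_l$ (with $l=l_1/2$), the tubular neighborhood $\gamma^{l_1/2}$ of the arc, together with the cone, is disjoint from $\Sigma_1\cup\Sigma_2$; moreover $\gamma(0)\in\Gamma_r$ lies outside $\overline\Omega$. Since $f(\cdot,w)$ is harmonic in $\widetilde\Omega\setminus(\Sigma_1\cup\Sigma_2)$ for each fixed $w$ (and symmetrically in $w$), and $\gamma^{l_1/2}$ is a connected open set of this harmonicity domain containing both a piece near $\Gamma_r$ and the point $y=Q+h\widetilde\nu$ when $h$ is small, I would apply a three-spheres / chain-of-balls inequality. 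Concretely: cover $\gamma([0,1])$ by a chain of balls of radius comparable to $l_1$ whose number $N$ is controlled by the arc length (hence by $D/l_1$, i.e. by the a priori data), iterate the three-spheres inequality for harmonic functions along this chain to propagate the bound $|f|\le C\eps$ from a ball centered near $\Gamma_r$ to a ball $B_{l_1/4}$ centered at $\gamma(1)$, obtaining $|f(y,w)|\le C\,\eps^{\theta^N}\,(\sup|f|)^{1-\theta^N}$ there; the global sup of $f$ over the relevant region is bounded a priori by a power of $h^{-1}$ coming from the Green-type singularity of $R_i$ near its pole (here one must be careful, but as long as the poles $y,w$ stay at distance $\gtrsim h$ from $\Sigma_1\cup\Sigma_2$, which is guaranteed by the cone lying in $V$, the singular solutions are bounded by $C h^{-(n-2)}$ away from their own poles and that is all that enters). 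Then one last propagation step inside the cone $RC_{l_1/2}(\gamma(1))$ pushes the estimate from $B_{l_1/4}(\gamma(1))$ down to the point $y=Q+h\widetilde\nu$ at distance $\sim h$ from the vertex $Q\in\partial V$; this is the place where the exponent degenerates, and a standard estimate for harmonic functions in a cone (or an iterated three-spheres with shrinking radii $\sim h$) yields a loss of the form $\eps^{C'h^F}$ rather than $\eps^{\theta^N}$, with $F>0$ depending on the cone aperture $A$ and hence on the a priori data. Combining, and doing the same in both variables $y=w$, gives $|f(y,y)|\le C\,h^{-B}\,\eps^{C'h^F}$ with $0<B<n-2$, which is \eqref{stimaerrore}.

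\textbf{Main obstacle.} The delicate point is not the three-spheres machinery per se but controlling the competing quantities in the interpolation inequality \emph{uniformly in $h$}: one needs (i) a lower bound $\gtrsim h$ on the distance from the moving pole $y=Q+h\widetilde\nu$ to $\Sigma_1\cup\Sigma_2$ — this is exactly what the membership $Q\in\partial V_{l_1/2}$ and the geometry of the cone $C_{l_1/2}$ buy us, since the cone is disjoint from the cracks and $y$ lies on the cone axis — and (ii) an upper bound of the form $\sup|f|\lesssim h^{-(n-2)}$ (or any fixed negative power) on the whole chain plus cone, which requires estimating $S_{\Sigma_i}(y,w)$ via the asymptotics of $R_i(\cdot,y)$ near $\Sigma_i$ together with the $C^{1,\alpha}$ interior regularity of $R_i$ away from its pole (Theorem~\ref{rego} applied to the Robin functions, which solve a problem of the same type). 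Once these two uniform controls are in hand, the quantitative unique continuation along the chain of balls and then in the cone is routine, and the exponent $B<n-2$ is dictated by how much of the singular blow-up survives the interpolation. I would also note that the constants $\bar h,C,C',F$ depend only on $n,r_0,M,D$ through $N\lesssim D/l_1$, the cone aperture $A$, and the Harnack/three-spheres constants, so they depend on the a priori data alone, as claimed.
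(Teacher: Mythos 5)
Your proposal follows essentially the same route as the paper: the seed bound $|f|\le C\eps$ for poles outside $\overline\Omega$ via \eqref{eguaglianzafond} and \eqref{asin1}, an a priori bound on $f$ by a fixed negative power of $h$ using that the cone keeps the poles at distance $\gtrsim h$ from $\Sigma_1\cup\Sigma_2$, an iterated three-spheres inequality along a chain of balls of bounded number, a shrinking-radii three-spheres argument inside the cone (as in \cite[Proposition 3.5]{ADC}) producing the exponent $\eps^{C'h^F}$, and then the same propagation repeated in the second variable before setting $y=w$ on the cone axis. This matches the paper's proof, which carries out exactly these steps (with the a priori bounds $|f(\bar y,w)|\le Ch^{-1}$ and $|f(y,w)|\le Ch^{-1}|y-w|^{3-n}$, respectively $h^{-1}|\log|y-w||$ for $n=3$, in place of your $h^{-(n-2)}$, a difference you already anticipate by allowing any fixed negative power).
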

 \end{subsection}
 
Also, the proof of the above Proposition is postponed to Section \ref{properr}. 
 
 \begin{subsection}{Lower bound on the function f}

Let us consider $O\in \Sigma_1\cap \partial V_{l_1}$ the point in \eqref{puntodistanza}.
We introduce a point $O'\in \Sigma_1$ which is defined as follows by distinguishing two cases. 
\begin{itemize}
\item If $=O\in \Sigma_1$ is such that $\mbox{dist}(O,\partial\Sigma_1)<\frac{d_{l_1}}{4}$, then we consider a point $0'\in \Sigma_1$ so that $\mbox{dist}(O,O')=\frac{d_{l_1}}{2}$. It follows that $\mbox{dist}(O',\partial \Sigma_1)\ge \frac{d_{l_1}}{4}$ and $\mbox{dist}(O',\Sigma_2)\ge \frac{d_{l_1}}{2}$. 
\item If $O\in \Sigma_1$ is such that $\mbox{dist}(O,\partial\Sigma_1)\ge\frac{d_{l_1}}{4}$ then we set $O'=O$.
\end{itemize}

\begin{proposition}\label{stimealtobasso}
Let $\Omega$ be a domain in $\mathbb{R}^n$ satisfying the a-priori assumptions. Let $\Sigma_1, \Sigma_2$ be two cracks in $\Omega$ verifying the a-priori assumptions and $y=h\nu_1(O')$. Then for every $h, \ 0<h< \min\{\bar{r_0},r,\bar{h},\frac{c}{2}d_{l_1},\frac{c_0}{2}d_{l_1}^p\}$ we have that 
\begin{eqnarray}
|f(y,y)|\ge c_1 h^{2-n} - c_2|d_{l_1}^p-h|^{3-2n} 
\end{eqnarray}


where $c=\min\{\frac{1}{8},\frac{1}{4DnMr_0} \},p=\frac{n-1+\alpha}{\alpha}$ and $c_0,c_1,c_2>0$ are constants depending on the a-priori data only.
\end{proposition}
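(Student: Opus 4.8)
The strategy is to split $f(y,y)$ into its dominant singular part, coming from the point $O'\in\Sigma_1$ which is where the pole $y=h\nu_1(O')$ is sent, and a remainder which is controlled by the distance $d_{l_1}$. Recall from \eqref{eg} that $f=S_{\Sigma_1}-S_{\Sigma_2}$. The first task is to isolate, inside $S_{\Sigma_1}(y,y)$, the contribution of the term
$\int[R_1(\cdot,y)\partial_{\nu_1}R_1(\cdot,y)]_1$ over a small geodesic disk $D_{h}(O')\subset\Sigma_1$ around $O'$ (here $O'$ is chosen precisely so that $\mathrm{dist}(O',\partial\Sigma_1)\ge\frac{d_{l_1}}{4}$ and $\mathrm{dist}(O',\Sigma_2)\ge\frac{d_{l_1}}{2}$, so that $\Sigma_2$ and the crack edge $\partial\Sigma_1$ are both at a controlled distance from the relevant region). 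Using the asymptotic expansion of the Robin function $R_1(\cdot,y)$ as the pole $y$ approaches $\Sigma_1$ from the normal direction — i.e. Proposition \ref{stimeasintotiche} (with the H\"older-type control of the regular remainder from Proposition \ref{holder}) — one writes $R_1(x,y)=\Phi(x-y)\pm\Phi(x-y^{*})+(\text{bounded})$, where $y^{*}$ is the mirror image of $y$ across $\Sigma_1$ near $O'$ and $\Phi$ is the fundamental solution of the Laplacian; the sign of the reflected term is dictated by the (approximately Neumann, since $\gamma$ is bounded) boundary condition on $\Sigma_1$. Plugging this into $[R_1\,\partial_{\nu_1}R_1]_1$ and integrating over $D_h(O')$ produces a leading term of size $c_1 h^{2-n}$ with a definite sign: this is the familiar computation that $\int_{\mathbb{R}^{n-1}}|x'|^{-n}$-type kernels against their normal derivatives, localized at scale $h$, scale like $h^{2-n}$.

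Next I would bound all the remaining pieces of $f(y,y)$ by the stated error term $c_2|d_{l_1}^{p}-h|^{3-2n}$. These pieces are: (i) the integral of $S_{\Sigma_1}$ over $\Sigma_1\setminus D_h(O')$, where the reflected singularity has already dissipated and the kernels $R_1(\cdot,y)$, $\partial_{\nu_1}R_1(\cdot,y)$ are of size at worst a negative power of the distance from $y$ to $\Sigma_1\setminus D_h(O')$, which is $\gtrsim h$ up to the edge, but near $\partial\Sigma_1$ one only has distance $\gtrsim d_{l_1}$ — this is where the exponent $p=\frac{n-1+\alpha}{\alpha}$ enters, coming from the $C^{1,\alpha}$ parametrization \eqref{tip} of $\Sigma_1$ near its edge (the crack tip is reached only through a region whose "thickness" degenerates like a power of the distance, forcing $h\lesssim d_{l_1}^{p}$ in the hypotheses and producing the $|d_{l_1}^{p}-h|^{3-2n}$ loss); (ii) the part of $S_{\Sigma_1}$ supported on $\Sigma_1\cap\Sigma_2$ and on $\Sigma_1\setminus\Sigma_2$ — on $\Sigma_1\cap\Sigma_2$ the jump $[R_1(\cdot,y)]_1$ actually vanishes (same crack, same boundary condition there) so only $R_2$-type factors survive, and $R_2(\cdot,y)$ is evaluated at the pole $y$ which sits at distance $\gtrsim d_{l_1}$ from $\Sigma_2$; (iii) the entire $S_{\Sigma_2}(y,y)$, all of whose integrands contain a factor $R_1(\cdot,y)$ or $R_2(\cdot,y)$ integrated over (a subset of) $\Sigma_2$, which lies at distance $\ge\frac{d_{l_1}}{2}$ from $O'$, hence from $y$ once $h<\frac{c}{2}d_{l_1}$ — these contribute again a negative power of $d_{l_1}$, dominated by $c_2|d_{l_1}^{p}-h|^{3-2n}$. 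Assembling (i)–(iii) and the leading term gives $|f(y,y)|\ge c_1 h^{2-n}-c_2|d_{l_1}^{p}-h|^{3-2n}$.

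The main obstacle, I expect, is step (i): controlling the kernels uniformly up to the crack edge $\partial\Sigma_1$. Away from the edge everything is a routine estimate for Green-type functions of a mixed boundary value problem on a smooth piece of surface, but near $\partial\Sigma_1$ the solution $R_1(\cdot,y)$ has the singular gradient behavior flagged in the First Step of the introduction and in Theorem \ref{rego}, so one must combine the interior $C^{1,\alpha}$ bound on $\Sigma_\rho$ with an explicit, scale-invariant estimate of the singular contribution of the edge, and then optimize the cutoff scale against $d_{l_1}$; getting the precise exponent $3-2n$ and the power $p$ right is the delicate bookkeeping. A secondary subtlety is checking that the sign of the reflected fundamental solution is the one that makes the leading coefficient $c_1$ strictly positive rather than causing cancellation — here one uses that $\gamma^{\pm}\ge0$ is a lower-order ($C^{0,1}$) perturbation of the Neumann condition, so the reflection principle gives a $+\Phi(x-y^{*})$ term and the self-interaction integral has a fixed sign, uniformly in the a priori data.
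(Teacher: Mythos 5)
There is a genuine gap at the core of your argument: the ``dominant singular part'' you propose to extract, namely $\int [R_1(\cdot,y)\,\partial_{\nu_1}R_1(\cdot,y)]_1$ over a small disk around $O'$, is not a term of $f(y,y)$ at all. By \eqref{effe} and \eqref{eg}, every integrand in $S_{\Sigma_1}$ and $S_{\Sigma_2}$ is a cross product of the two \emph{different} Robin functions $R_1(\cdot,y)$ and $R_2(\cdot,y)$ (same pole, different cracks); no self-interaction $R_1\partial_{\nu_1}R_1$ occurs, so your reflection computation and the sign discussion for that self-interaction have nothing to be plugged into. The paper's lower bound instead isolates the cross term $[R_1(\cdot,y)]_1\,\partial_{\nu_1}R_2(\cdot,y)$ on $(\Sigma_1\setminus\Sigma_2)\cap B_{\rho}(O')$ with $\rho=\min\{cd_{l_1},c_0d_{l_1}^p\}$, and the mechanism is asymmetric: because $\mathrm{dist}(O',\Sigma_2)\ge \frac{d_{l_1}}{2}$, the function $R_2(\cdot,y)$ is harmonic in $B_{\rho}(O')$ and is compared to the free fundamental solution $\Gamma(\cdot,y)$ by the maximum principle (errors $\rho^{2-n}$, $\rho^{1-n}$ for the gradient), while $R_1^{+}(\cdot,y)$ is compared to the half-space Robin function $R_0$ via Proposition \ref{stimeasintotiche} and $R_1^{-}(\cdot,y)$ is shown to be only of size $\rho^{2-n}$ on the far side of $\Sigma_1$ by the weak maximum principle; the positive term $c_1h^{2-n}$ then comes from $\int R_0\,\partial_{\nu}\Gamma\,d\sigma\sim\int h|\xi'-\eta|^{2-2n}d\xi'$, not from a reflected self-interaction. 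Without this identification your scheme does not produce the claimed leading term.

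Two further inaccuracies in your error estimates: on $\Sigma_1\cap\Sigma_2$ the jump $[R_1(\cdot,y)]_1$ does \emph{not} vanish ($R_1$ is discontinuous across all of $\Sigma_1$); the relevant term there is $[R_2(\cdot,y)\partial_{\nu_1}R_1(\cdot,y)]_1$, which the paper bounds by $C|d_{l_1}^p-h|^{4-2n}$ using \eqref{asin1} together with the Robin condition $\partial_{\nu_1}R_1^{\pm}=\gamma^{\pm}R_1^{\pm}$ and the fact that $\Sigma_1\cap\Sigma_2\subset\Sigma_2$ is far from $y$. Also, the exponent $p=\frac{n-1+\alpha}{\alpha}$ does not come from a degenerating parametrization of the crack tip as you suggest; it enters through the choice of the localization radius $\rho=\min\{cd_{l_1},c_0d_{l_1}^p\}$ and the constraint $h<\frac{c_0}{2}d_{l_1}^p$, which is what makes the $d_{l_1}^{-\alpha}$ and $d_{l_1}^{1-n}$ error terms (estimated via Propositions \ref{stimeasintotiche} and \ref{holder}, the latter providing the integrable edge singularity $\mathrm{dist}(x,\partial\Sigma_2)^{\widetilde\alpha-1}$) absorbable into $c_1h^{2-n}$ and $c_2|d_{l_1}^p-h|^{3-2n}$. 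Your overall architecture (localize at $O'$, control the rest by powers of $d_{l_1}$, bound all of $S_{\Sigma_2}$ as error) matches the paper, but as written the proof of the key positive contribution is carried out for an integrand that is absent from $f$.
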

 
The proof is deferred to Section \ref{properr}.

 \end{subsection}

\begin{subsection}{Proof of the main Theorem}
We now give the proof of our main result. 


\begin{proof}[Proof of Theorem \ref{main}.]By Proposition \ref{stimeasintotiche} and Proposition \ref{stimealtobasso}, we have, up to a possible replacing of the constant $C$ in \eqref{stimaerrore},
that
\begin{eqnarray}
\frac{\eps^{C'h^F}}{h^{n-2}}\ge \frac{c_1}{C} h^{2-n} - \frac{c_4}{C}|d_{l_1}^p-h|^{3-2n} 
\end{eqnarray}
where $c_4=c_2+c_3$.
By choosing $h=qd_{l_1}^{p\frac{2n-3}{n-2}}$  we have that 
\begin{eqnarray}
\frac{c_1}{C} h^{2-n} - \frac{c_4}{C}|d_{l_1}^p-h|^{3-2n} \ge {c_5}h^{2-n}  \ .
\end{eqnarray}
with  $q=\frac{1}{8}\left(1+\left(\frac{c_1}{2c_4} \right)^{\frac{1}{2n-3}}\right)^{-\frac{2n-3}{n-2}}\left(\frac{c_1}{2c_4}\right)^{\frac{1}{n-2}}$ and $c_5=\min\{\frac{1}{2},\frac{c_1}{2C} \}$.
Hence by combining the last two inequalities we obtain 
\begin{eqnarray}
\eps^{C'h^F}> c_5\ ,
\end{eqnarray}
from which follows that 
\begin{eqnarray}
C'h^F\le \left|\frac{\log(c_5)}{\log(\eps)}\right|.
\end{eqnarray}
Finally, by our choice of $h$ we can conclude that 
\begin{eqnarray}
d_{l_1}\le c_6 |\log(\eps)|^{-\eta} \ ,
\end{eqnarray}
with $c_6=(q^{-F}C'^{-1}|\log(c_5)|)^{\frac{n-2}{Fp(2n-3)}}$ and $\eta=\frac{n-2}{Fp(2n-3)}$.

The thesis follows by Proposition \ref{equiv} with $C=c_6C_1$.
\end{proof}

\end{subsection}

\end{section}

\begin{section}{Proof of the regularity estimate}\label{regol}

In this section we shall give the proof of the regularity property of the solution $u$ to \eqref{C} and its first order derivatives near the crack.

\begin{proof}[Proof of Theorem \ref{rego}.]By the arguments in \cite[Chap. 3]{Eva}, $u$ is H\"{o}lder continuous with its first order derivatives up to $\Sigma$ except possibly at points of $\partial \Sigma$. The proof is based on the Moser iteration techniques (see for instance \cite[Chap. 8]{gt}) and by well known regularity bounds for the Neumann problem \cite[p. 667]{ADN}.

We now investigate the behavior of $u$ near the crack edge $\partial \Sigma$. Fix $x_0\in \partial \Sigma$, up to a translation we may assume that $x_0=0$. Let us consider the following change of variables $y=\hat{\Phi}(x)$

\begin{equation*}
\left\{
\begin{array}
{lcl}
y''=z''\ ,
\\
y_{n-1}= x_{n-1}- \psi(x'')\ ,
\\
y_n= x_n - \varphi(x'',x_{n-1}-\psi(x''))\ ,
\end{array}
\right.
\end{equation*}
where $\varphi, \psi$ are the $C^{1,\alpha}$ functions introduced in previous section satisfying \eqref{chap2:2}-\eqref{chap2:3} and \eqref{tip}-\eqref{psi2} respectively. The map $\hat{\Phi} \in C^{1,\alpha}(B_{\frac{r_0}{4M}}(0), \mathbb{R}^n)$ and its inverse $\hat{\Phi}^{-1}\in C^{1,\alpha}(B_{r_0}(0), \mathbb{R}^n)$.

With respect to the new variables the crack coincides, within $B_{\frac{r_0}{4M}}(0)$, with the half hyperplane $\{y_n=0, y_{n-1}<0 \}$.

Denoting with 
\begin{eqnarray}
&&\hat{A}(y)=|\mbox{det}D\hat{\Phi}^{-1}(y)|(D\hat{\Phi})(\hat{\Phi}^{-1}(y))(D\hat{\Phi})^T(\hat{\Phi}^{-1}(y))\ ,\\
&&\hat{\gamma}^+(y)=\gamma^+(\hat{\Phi}^{-1}(y))\ , \ \ \hat{\gamma}^-(y)=\gamma^-(\hat{\Phi}^{-1}(y)) \ , \\
&&v(y)=u(\hat{\Phi}^{-1}(y))\ 
\end{eqnarray}
we have that $v\in H^1(\hat{B}^-_{\frac{r_0}{4M}}(0) )$ is a weak solution to the problem 
\begin{equation}\
\left\{
\begin{array}
{lcl}
{\mbox{div}}(\hat{A}(y)\nabla v(y))=0\ ,& \mbox{in $\hat{B}_{\frac{r_0}{4M}}(0)$ ,}
\\
\hat{A}(y)\nabla v^{\pm}(y)\cdot \nu^{\pm} - \hat{\gamma}^{\pm}(y)v^{\pm}(y)=0 , & \mbox{on $\hat{B}^-_{\frac{r_0}{4M}}(0) $ ,}
\end{array}
\right.
\end{equation}
where $$\hat{B}_{\frac{r_0}{4M}}(0)=B_{\frac{r_0}{4M}}(0)\setminus \{y_n=0 \} $$ and $$\hat{B}^-_{\frac{r_0}{4M}}(0)=\hat{B}_{\frac{r_0}{4M}}(0)\cap \{y_{n-1}<0 \} $$ and $\nu^+=(0,\dots,0,-1), \ \nu^-=(0,\dots,0,1)$.

We introduce the following system of variables $z=\hat{\Psi}(y)$
\begin{equation*}
\left\{
\begin{array}
{lcl}
z''=y''\ , \ r=\sqrt{y_{n-1}^2 + y_n^2}
\\
z_{n-1}= \displaystyle\sqrt{\frac{r(r+y_{n-1})}{2}}\ ,
\\
z_n=sign(y_n) \displaystyle\sqrt{\frac{r(r-y_{n-1})}{2}}  ,
\end{array}
\right.
\end{equation*}

For the reader's convenience we express both systems of variables in cylindrical coordinates also
\begin{eqnarray}
y=(y'', r \cos \theta,r\sin \theta)
\end{eqnarray}

and
\begin{eqnarray}
z=(y'', r \cos \frac{\theta}{2},r\sin \frac{\theta}{2})
\end{eqnarray}

with $0<r<\frac{r_0\sqrt{n}}{8M}, -\pi\le\theta<\pi, -\frac{r_0\sqrt{n}}{8M}<h_i<\frac{r_0\sqrt{n}}{8M}, i=1\dots,n-2$.

The underlying idea here relies on mapping through $\hat{\Psi}$ the set $\hat{B}_{\frac{r_0\sqrt{n}}{8M}}(0)$ into the half ball $B^+_{\frac{r_0\sqrt{n}}{8M}}(0)=\{z\in {B}_{\frac{r_0\sqrt{n}}{8M}}(0) : z_{n-1}>0 \}$ so that the two side of the $\hat{B}^-_{\frac{r_0\sqrt{n}}{8M}}(0)$ are mapped into the flat part of $B^+_{\frac{r_0\sqrt{n}}{8M}}(0)$ (see also \cite[Remark C.3.1.]{ADiB}).

Moreover it can be verified that the map $\hat{\Psi}\in W^{1,\infty}(\hat{B}_{\frac{r_0\sqrt{n}}{8M}}(0), \mathbb{R}^n)$ and $\hat{\Psi}^{-1}\in W^{1,\infty}(B^+_{\frac{r_0\sqrt{n}}{8M}}(0), \mathbb{R}^n)$. 

Setting 
\begin{eqnarray*}
&&\hat{B}(y)=|\mbox{det}D\hat{\Psi}^{-1}(z)|(D\hat{\Psi})(\hat{\Psi}^{-1}(z))\hat{A}(\hat{\Psi}^{-1}(z))(D\hat{\Psi})^T(\hat{\Psi}^{-1}(z))\ ,\\
&&\bar{\gamma}(z)=
\left\{
\begin{array}{rl}
&\hat{\gamma}^+(\hat{\Psi}^{-1}(z))\ \ \mbox{if}\ z_n\ge 0 \ ,\\
& \hat{\gamma}^-(\hat{\Psi}^{-1}(z))\ \ \mbox{if}\ z_n< 0 \ ,
\end{array}
\right.\\
&&w(z)=v(\hat{\Psi}^{-1}(z))\ ,
 \end{eqnarray*}
we have that $w\in H^1(B^+_{\frac{r_0\sqrt{n}}{8M}}(O) )$ is a weak solution to the problem 
\begin{equation}\
\left\{
\begin{array}
{lcl}
{\mbox{div}}(\hat{B}(z)\nabla w(z))=0\ ,&& \mbox{in $B^+_{\frac{r_0\sqrt{n}}{8M}}(0)$ ,}
\\
\hat{B}(z)\nabla w(z)\cdot \hat{\nu} - \bar{\gamma}(z)w(z)=0 , && \mbox{on $B_{\frac{r_0\sqrt{n}}{8M}}(0)\cap \{z_{n-1}=0 \}$ ,}
\end{array}
\right.
\end{equation}
where $\hat{\nu}=(0,\dots,0,-1,0)$.

Observing that $\hat{B}\in L^{\infty}(B^+_{\frac{r_0\sqrt{n}}{8M}}(0)), \bar{\gamma}\in L^{\infty}(B_{\frac{r_0\sqrt{n}}{8M}}(0)\cap \{z_{n-1}=0 \})$ and dealing again as in \cite[Chap. 3]{Eva} we infer that $w\in C^{0,\alpha}(\overline{B^+_{\frac{r_0\sqrt{n}}{8M}}(0)})$.

Finally, coming back to the former system of coordinates, we obtain the thesis.  
\end{proof}
\end{section}
\begin{section}{Integration by parts, proofs}\label{sectionintbyparts}

In this Section we shall deal with the proof of our ``Alessandrini identity" type formula tuned for the crack problem at hand.

\begin{lemma}{(\bf{The divergence formula over $\Omega\setminus(\Gamma_1\cup\Gamma_2)$})}\label{divform}

Let $F$ be a vector field such that $F\in C^1(\Omega\setminus(\Gamma_1\cup\Gamma_2))$ and moreover $F\in C(\overline{\Omega^{i}_{+}})$ and $F\in C(\overline{\Omega^{i}_{-}})$ with $i=1,2$, then the following holds

\begin{eqnarray}\label{divergenceformula}
\int_{\Omega} {div}F dx =&& \int_{\Gamma_1\setminus\Gamma_2} [F\cdot\nu_1]_1 d\sigma(x) + \int_{\Gamma_2\setminus\Gamma_1} [F\cdot\nu_2]_2 d\sigma(x) + \nonumber \\ &&+ \int_{\Gamma_1\cap\Gamma_2} [F\cdot\nu_1]_1 d\sigma(x) +
 \int_{\partial \Omega} F\cdot {\nu}_{e}d\sigma(x) . 
\end{eqnarray}
\end{lemma}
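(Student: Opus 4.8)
The plan is to reduce the identity to the classical divergence theorem applied on finitely many Lipschitz subdomains whose union is $\Omega$ and whose interfaces are (pieces of) $\Gamma_1$, $\Gamma_2$ and $\partial\Omega$. First I would recall the notation from Section \ref{sectionintbyparts}: each closed hypersurface $\Gamma_i$ bounds a relatively compact interior domain $\Omega_i^-\subset\subset\Omega$ and $\Omega_i^+=\Omega\setminus\overline{\Omega_i^-}$, with $\nu_i$ the normal pointing into $\Omega_i^+$. The four sets $\Omega_1^-\cap\Omega_2^-$, $\Omega_1^-\cap\Omega_2^+$, $\Omega_1^+\cap\Omega_2^-$, $\Omega_1^+\cap\Omega_2^+$ partition $\Omega$ up to a null set, and each is an open set with Lipschitz boundary (being a finite Boolean combination of sets bounded by the $C^{1,\alpha}$ hypersurfaces $\Gamma_1,\Gamma_2$ and the Lipschitz surface $\partial\Omega$ — here one should note that intersections of $C^{1,\alpha}$ domains are in general only Lipschitz, but that is enough for the divergence theorem). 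On each of these four pieces $F$ is $C^1$ up to the boundary by the hypotheses $F\in C(\overline{\Omega_\pm^i})$ together with $F\in C^1(\Omega\setminus(\Gamma_1\cup\Gamma_2))$, so the standard divergence theorem for Lipschitz domains (e.g. via the Gauss–Green theorem for sets of finite perimeter, or Nečas' theorem) applies on each piece.

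Next I would add the four resulting identities. On the left one gets $\int_\Omega \operatorname{div}F\,dx$, since the pieces cover $\Omega$ up to measure zero. On the right, the boundary of each piece consists of portions lying in $\Gamma_1$, portions lying in $\Gamma_2$ and portions lying in $\partial\Omega$; the $\partial\Omega$-portions of the outer-unbounded pieces assemble exactly to $\int_{\partial\Omega}F\cdot\nu_e\,d\sigma$ (each such point of $\partial\Omega$ belongs to exactly one piece). The key bookkeeping is on the interface integrals. A point of $\Gamma_1\setminus\Gamma_2$ is an interface between $\Omega_1^-$ and $\Omega_1^+$ (further subdivided by the sign of the $\Omega_2$-coordinate, but locally $\Gamma_2$ plays no role there): it is counted once with the outward normal of the $\Omega_1^-$-side, namely $+\nu_1$, contributing $F\cdot\nu_1$ evaluated from the $-$ side, and once from the $\Omega_1^+$-side with outward normal $-\nu_1$, contributing $-F\cdot\nu_1$ from the $+$ side; wait — more precisely, the $\Omega_1^-$ side sees outward normal $\nu_1$ and boundary trace $t_1^-$, the $\Omega_1^+$ side sees outward normal $-\nu_1$ and trace $t_1^+$, so the sum is $(t_1^-F - t_1^+F)\cdot\nu_1 = -[F\cdot\nu_1]_1$. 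Hmm, this has the opposite sign to what is claimed, so the correct orientation convention is that $\nu_1$ points into $\Omega_1^-$, or equivalently $[\,\cdot\,]_1$ denotes $t^- - t^+$; I would fix the sign conventions at the start so that the interface term on $\Gamma_1\setminus\Gamma_2$ comes out as $+[F\cdot\nu_1]_1$, and similarly $+[F\cdot\nu_2]_2$ on $\Gamma_2\setminus\Gamma_1$. On $\Gamma_1\cap\Gamma_2$ (which is a relatively open subset of both hypersurfaces where they coincide) the analysis is identical to the $\Gamma_1\setminus\Gamma_2$ case with $\nu_1=\pm\nu_2$, yielding $+[F\cdot\nu_1]_1$.

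The main obstacle I anticipate is entirely a matter of careful orientation bookkeeping rather than hard analysis: one must verify that when $\Gamma_1$ and $\Gamma_2$ cross transversally (or coincide on a set of positive measure), the four Boolean pieces really do induce on each portion of $\Gamma_1\cup\Gamma_2$ exactly the two opposite co-normals with the correct one-sided traces, and that no portion is double-counted or omitted — in particular the set $\Gamma_1\cap\Gamma_2$ must be handled so that it is not also swept up inside $\Gamma_1\setminus\Gamma_2$ or $\Gamma_2\setminus\Gamma_1$. A secondary technical point is the regularity of the Boolean pieces: I would remark that although $\Gamma_i$ is $C^{1,\alpha}$, finite intersections are only Lipschitz, but Lipschitz regularity suffices for the Gauss–Green formula and for the existence of the $H^{1/2}$-traces $t_i^\pm F$ and $F\cdot\nu_e$ used in the statement. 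Assembling these observations gives \eqref{divergenceformula}. $\hfill\square$
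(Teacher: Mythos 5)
Your overall strategy (cut $\Omega$ along $\Gamma_1\cup\Gamma_2$ and add up divergence theorems on the pieces) is the right spirit, but the step on which everything rests is false as stated: the Boolean pieces $\Omega_1^{\pm}\cap\Omega_2^{\pm}$ are \emph{not} in general Lipschitz domains. Lipschitz regularity of an intersection survives only where $\Gamma_1$ and $\Gamma_2$ cross transversally with angle bounded away from zero; at points where the two $C^{1,\alpha}$ hypersurfaces are tangent (and tangency cannot be excluded -- $\Gamma_1\cap\Gamma_2$ is essentially the zero set of $\varphi_1-\varphi_2$ for two $C^{1,\alpha}$ graphs, hence can be an arbitrary closed set), a piece such as $\Omega_1^-\cap\Omega_2^+$ is locally the region pinched between two tangent graphs: a cusp, whose boundary is not a Lipschitz graph in any direction, and which may moreover split into infinitely many components. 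This is precisely the difficulty the paper flags (``$\Omega\setminus(\Sigma_1\cup\Sigma_2)$ \dots might be rather wild'') and is the whole content of its proof. The fallback references you mention do not repair this: the De Giorgi--Federer Gauss--Green theorem for sets of finite perimeter, and Ne\v{c}as-type results, require $F$ Lipschitz (or $C^1$) on a neighborhood of the closure, whereas here $F$ is only $C^1$ off $\Gamma_1\cup\Gamma_2$ and continuous up to each side; with only that regularity one would have to develop weak normal traces (divergence-measure field theory) and identify them with the one-sided traces on the cuspidal boundary, none of which is addressed.

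The paper instead localizes with a partition of unity subordinate to a finite cover by small balls on which each $\Gamma_i$ is a graph, and splits the balls meeting $\Gamma_1\cap\Gamma_2$ into two cases according to the aperture of the tangent planes: if the aperture is bounded below, the local pieces are finitely many Lipschitz domains (your argument is fine there); if not, then for $\rho$ small both surfaces are graphs $\varphi_1,\varphi_2$ over the \emph{same} plane, and the ball is cut into $U=\{x_n>\max\{\varphi_1,\varphi_2\}\}$, $L=\{x_n<\min\{\varphi_1,\varphi_2\}\}$ (Lipschitz) and the in-between set $I$, which may be neither Lipschitz nor connected but is a normal domain between Lipschitz graphs, for which Gauss--Green holds by a direct Fubini/one-dimensional argument using the continuity of $F$ up to each side. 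Some such device is what your proof is missing. Your observation about orientation is only a matter of convention (with the paper's choice $\nu_i$ exterior to $\Omega_i^-$ and $[\,\cdot\,]_i=t_i^+-t_i^-$ the interface term is $(t_i^--t_i^+)F\cdot\nu_i$, so the sign must indeed be fixed consistently with the statement), and is not the substantive issue.
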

\begin{proof}Given $0< \rho < r_0$, we have that by the compactness of $\overline{\Omega}$ we can find a finite number of points $P_i,\ i=1\dots,N $ such that $\cup_{i=1}^{N}B_{\rho}(P_i)$ covers $\overline{\Omega}$. Let us observe that due to the regularity hypothesis made on $\Gamma_i\ i=1,2$, we can choose $\rho$ small enough so that on each ball  $B_{\rho}(P_i)$ with $i=1,\dots, N$, $\Gamma_1$ and $\Gamma_2$ are separately graphs each with respect to a suitable reference system. 

Let $\{\alpha_i\}_{i=1}^{N}$ be a smooth partition of unity subordinate to the open covering $\cup_{i=1}^{N}B_{\rho}(P_i)$, namely we are assuming that 
\begin{description}
\item [i)] $0\le\alpha_i\le 1\ \ , \ \alpha_i \in C_0^{\infty}(B_{\rho}(P_i))\ , i=1,\dots, N$ ;

\item [ii)] $\sum_{i=1}^{N}\alpha_i =1$ on $\cup_{i=1}^{N}B_{\rho}(P_i)$.
 
\end{description}
Then, we have 
\begin{eqnarray}
\int_{\Omega} {\mbox{div}}F dx =\sum_{i=1}^N\int_{B_{\rho}(P_i)}\mbox{div} (\alpha_i F) dx \ .
\end{eqnarray}
The only interesting cases to consider are when $B_{\rho}(P_i)$ contains $\Gamma_1\cap\Gamma_2$, whereas in the other ones the divergence theorem may be applied in a straightforward fashion. Let us fix a small aperture $\theta_0$ and let us distinguish two cases. 
\begin{enumerate}
\item $\forall x_0\in B_{\rho}(P_i)\cap\Gamma_1\cap\Gamma_2$ the tangent planes of $\Gamma_1,\Gamma_2$ at $x_0$ have an aperture $\theta\ge\theta_0$.
\item $\exists x_0 \in B_{\rho}(P_i)\cap\Gamma_1\cap\Gamma_2$ and the tangent planes of $\Gamma_1, \Gamma_2$ in $x_0$ form an aperture $\theta<\theta_0$.
\end{enumerate}
{\bf{Case 1}}.
In such a case $ B_{\rho}(P_i)\setminus (\Gamma_1 \cup \Gamma_2)$ is composed by finitely many Lipschitz domains and the divergence theorem can be used in each component separately. Note that the same occurs when  $\Gamma_1\cap\Gamma_2\cap B_{\rho}(P_i)=\emptyset$.


{\bf{Case 2}}. 
In this situation, if one chooses $\theta_0$ and $\rho$ sufficiently small in terms of $r_0, M$ one obtain that there exists $x\in \Gamma_1\cap\Gamma_2\cap B_{\rho}(P_i)$ such that $\Gamma_1,\Gamma_2$ are tangential at $x$. In this case having chosen $\rho$ sufficiently small, $\Gamma_1$ and $\Gamma_2$ are simultaneously graphs with respect to the same reference system. Moreover, we consider the following three domains. 
\begin{description}
\item [a)] $U=\{(x',x_n)\in B_{\rho}(P_i):\ x_n> \max\{\varphi_1(x'),\varphi_2(x')\} \}$ ;
\item [b)] $I=\{(x',x_n)\in B_{\rho}(P_i): \min\{\varphi_1(x'),\varphi_2(x')\}<x_n< \max\{\varphi_1(x'),\varphi_2(x')\} \}$ ;
\item [c)] $L=\{(x',x_n)\in B_{\rho}(P_i):\ x_n< \min\{\varphi_1(x'),\varphi_2(x')\} \}$ .
\end{description}
Both $U$ and $L$ are Lipschitz domains. The set $I$ may not be Lipschitz and disconnected, but it is a normal domain between Lipschitz graphs. Hence in all such sets the divergence theorem holds true.   
\end{proof}

\begin{proof}[Proof of Theorem \ref{intbyparts}.] There exists a sequence of $C^{\infty}_0(\mathbb{R}^n)$ functions $\varphi_m, \ m\in \mathbb{N}$, satisfying the following properties. First, $0\le \varphi_m \le 1$, $\varphi_m$ is identically equal to $1$ in $\mathcal{U}^i_{\frac{1}{2m}}$ and $\varphi_{m}$ is identically equal to zero outside $\mathcal{U}_i^{\frac{1}{m}}$. Second, we have that 
\begin{eqnarray}
|\nabla \varphi_m|\le Cm\ \ \, \ \ |\mathcal{U}^i_{\frac{1}{m}}|\le \frac{C}{m^2}
\end{eqnarray}
and hence
\begin{eqnarray}\label{capacity}
\int_{\mathcal{U}^i_{\frac{1}{m}}}|\nabla\varphi_m(x)|^2\le C \ ,
\end{eqnarray}
where $C>0$ is a constant depending on the a-priori data only. 

We notice that 
\begin{eqnarray}
\mbox{div}(u_2\varphi_m\nabla u_1)= \nabla u_2\cdot \nabla u_1 \varphi_m + \nabla u_2\cdot\nabla \varphi_m u_1 . 
\end{eqnarray}
We observe that $\nabla u_1\cdot \nabla u_2 \varphi_m \rightarrow 0$ a.e. in $\Omega$ as $m\mapsto \infty$ and also in  $L^1(\Omega)$ by dominated convergence. 
On the other hand we have 
\begin{eqnarray}\label{capacity2}
\int_{\Omega}|\nabla u_2\cdot\nabla \varphi_m u_1|\le \left(\int_{\mathcal{U}^i_{\frac{1}{m}}}|\nabla\varphi_m(x)|^2\right)^{\frac{1}{2}}\left(\int_{\mathcal{U}^i_{\frac{1}{m}}}u_1^2|\nabla u_2(x)|^2\right)^{\frac{1}{2}} .
\end{eqnarray}
By the bound in \eqref{capacity} and observing that $u_1^2|\nabla u_2(x)|^2\in L^1(\Omega)$ we can conclude by the absolute continuity of the integral that the right hand side of \eqref{capacity2} tends to zero as $m\rightarrow \infty$. 

Hence we found that 
\begin{eqnarray}\label{approssimazione}
\int_{\Omega}\mbox{div}(u_2(x)\nabla u_1(x))dx = \lim_{m\rightarrow \infty}\int_{\Omega}\mbox{div}(u_2(x)(1-\varphi_m)\nabla u_1(x))dx
\end{eqnarray}
Using the divergence formula \eqref{divergenceformula} with $F=u_2(1-\varphi_m)\nabla u_1$ we get that

\begin{eqnarray}\label{divergenceformula2}
&&\int_{\Omega} \mbox{div}(u_2(1-\varphi_m)\nabla u_1) dx = \int_{\Gamma_1\setminus\Gamma_2} [u_2(1-\varphi_m)\partial_{\nu_1} u_1]_1 d\sigma(x) + \nonumber\\
&&+ \int_{\Gamma_2\setminus\Gamma_1} [u_2(1-\varphi_m)\partial_{\nu_2} u_1]_2 d\sigma(x) +  \int_{\Gamma_1\cap\Gamma_2} [u_2(1-\varphi_m)\partial_{\nu_1} u_1]_1 d\sigma(x) +\nonumber \\
 && \int_{\partial \Omega} u_2(1-\varphi_m)\partial_{{\nu}_{e}} u_1d\sigma(x) . 
\end{eqnarray}
Let $S$ be any of the portions $\Gamma_1\setminus\Gamma_2, \ \Gamma_2\setminus\Gamma_1, \ \Gamma_1\cap\Gamma_2$. We claim that 
\begin{eqnarray}\label{claim1}
\int_S \varphi_m[u_2\partial_{\nu_i} u_1]_i d\sigma(x) \rightarrow \ 0\ ,
\end{eqnarray}
with $i=2$ if $S=\Gamma_2\setminus\Gamma_1$ and with $i=1$ in the remaining cases. 

We observe that in order to prove our claim it sufficient to establish that $|\nabla u_1|\in L^1(S)$. If $\partial \Sigma_1 \cap S=\emptyset$ the integrability of $|\nabla u_1|$ over $S$ easily follows from Theorem \ref{rego}. 
Let us then analyze the case when $\partial \Sigma_1 \cap S \neq \emptyset$ and distinguish two situations.
\begin{description}
\item [i)] $\partial \Sigma_1$ intersects $S$ transversally;
\item [ii)] $\partial \Sigma_1$ intersects $S$ tangentially.
\end{description}
We begin by analyzing the case i) and observing that in such a case the intersection $V=\partial \Sigma_1\cap S$ is a $(n-3)-$manifold. We can find a finite number of points $P_i,\ i=1,\dots, N$ in $V$ such that $\cup_{i=1}^N B_{\hat{r}}(P_i)$ covers $V$, where $\hat{r}$ will be fixed later on. After a translation we may assume that $P_i=0$ and fixing local coordinates, we can represent $S$ as a graph of a $C^{1,\alpha}$ function  $\varphi$ satisfying \eqref{chap2:1}-\eqref{chap2:3}. Let $\Phi\in C^{1,\alpha}(B_{\frac{r_0}{4M}}(0), \mathbb{R}^n)$ be the map defined as follows 
\begin{eqnarray}
\Phi(y',y_n)=(y', y_n + \varphi(y'))\ .
\end{eqnarray} 
we have that there exists $\theta_1, \theta_2, \theta_1>1>\theta_2>0$ constants depending on $r_0$ and $M$ only such that for any $r\in (0,\frac{r_0}{4M})$ it follows 
\begin{eqnarray}
B_{\theta_2 r}(0) \subset \Phi(B_r(0))\subset B_{\theta_1 r}(0)\ .
\end{eqnarray}
The inverse map $\Phi^{-1} \in C^{1,\alpha}(B_{r_0}(0), \mathbb{R}^n)$ and it is defined by 
\begin{eqnarray}
\Phi^{-1}(x',x_n)=(x', x_n - \varphi(x'))\ .
\end{eqnarray}
Moreover, by our assumptions on $\partial \Sigma_1$ and by the implicit function theorem we have that there exists $\bar{r}>0$ depending on the a-priori data only such that for any $r\in (0,\bar{r})$
\begin{eqnarray*}
\Phi^{-1}(B_{\theta_2r}(0)\cap V)\subset \{y\in B'_r(0):  y_{n-1}=\psi_1(y_1, \dots, y_{n-2}), \ y_{n-2}=\psi_2(y_1, \dots, y_{n-3}) \} ,
\end{eqnarray*}
where $\psi_1 \in C^{1,\alpha}(B''_{r}(0), \mathbb{R})$ and $\psi_2 \in C^{1,\alpha}(B'''_{r}(0), \mathbb{R})$.
In particular, when $n=3$ the set $\Phi^{-1}(B_{\theta_2r(0)}\cap V)$ reduces to a single point. 

Let $\Psi \in C^{1, \alpha}(B'_{\frac{r}{4M}}, \mathbb{R}^{n-1})$ be the map defined as follows 
\begin{eqnarray}
\Psi(z''', z_{n-2}, z_{n-1})= (z''', z_{n-2}+\psi_2(z'''), z_{n-1} + \psi_1(z'')) \ .
\end{eqnarray}
As before it can be proved that there exist constants $\theta_3, \theta_4$ such that $\theta_3>1>\theta_4>0$ depending on $r_0$ and $M$ only such that 
for any $\rho \in (0,\frac{r}{4M})$ it follows that 
\begin{eqnarray}
B'_{\theta_4\rho}(0)\subset \Psi(B'_{\rho}(0))\subset B'_{\theta_3\rho}(0) \ .
\end{eqnarray}
The inverse map $\Psi^{-1}\in  C^{1, \alpha}(B'_{\rho}, \mathbb{R}^{n-1})$ and it is defined by 
\begin{eqnarray}
\Psi^{-1}(y''', y_{n-2}, y_{n-1})= (y''', y_{n-2}-\psi_2(y'''), y_{n-1} - \psi_1(y'')) \ .
\end{eqnarray}
Let $x$ be a point in $B_{\frac{\theta_4\bar{r}}{8M}}(0)$, then by Theorem \ref{rego}, we may infer that there exists a constant $C_1>0$ depending on the a-priori data only such that for any $\bar{x}\in V \cap B_{\frac{\theta_4\bar{r}}{8M}}(0)$ we have 
\begin{eqnarray}  
\frac{|u_1(x)-u_1(\bar{x})|}{|x-\bar{x}|}\le C |x-\bar{x}|^{\alpha-1}\le C_1 \mbox{dist}(x,  V \cap B_{\frac{\theta_4\bar{r}}{8M}}(O))^{\alpha -1} \ .
\end{eqnarray}
Let then $x$ be a point in $B_{\frac{\theta_4\bar{r}}{8M}}(0)\cap S$ and let $z'\in B'_{\frac{\bar{r}}{4M}}(0)$ be such that $x=\Phi(\Psi(z'),0)$. Furthermore, let $x_0\in V$ be such that $|x-x_0|={\mbox{dist}}(x, V)$ with $x_0=\Phi(\Psi(z''',0,0),0)$. We have that there exists constant $C_2>0$ depending on the a-priori data only such that 
\begin{eqnarray*}
|\nabla u_1(\Phi(\Psi(z'),0) )|\le C_2 {|\Phi(\Psi(z'),0) - \Phi(\Psi(z''',0,0),0) |}^{\alpha-1} \ .
\end{eqnarray*}
Finally, by the $C^{1,\alpha}$ regularity of $\Phi^{-1}$ and $\Psi^{-1}$ we can infer that there exists a constant $C_3>0$ depending on the a-priori data only such that 
\begin{eqnarray}
|\nabla u_1(\Phi(\Psi(z'),0) )|\le C_3 |(z',0) - (z''',0)|^{\alpha -1} \ .
\end{eqnarray}
From the above estimate we deduce that there exists a constant $C_4>0$ depending on the a-priori data only such that 
\begin{eqnarray}
\int_{B'_{\frac{\bar{r}}{4M}}(0)}\nabla u_1(\Phi(\Psi(z'),0) )|dz'\le C_4 \ .
\end{eqnarray}
Hence choosing $\hat{r}=\theta_2\theta_4\frac{\bar{r}}{16M}$ and by a covering argument, we obtain that 
\begin{eqnarray}
\int_{S} |\nabla u_1(x)|dx < \infty \ .
\end{eqnarray}
We now treat the case ii). Since in this case the intersection $\partial \Sigma_1\cap S$ might be an irregular set, we find convenient to consider the orthogonal projection operator $\Pi : \partial \Gamma_1 \rightarrow S$ and we define $W=\Pi(\partial \Gamma_1)$ which is an $(n-2)-$ manifold. As before, we can find a finite number of points $P_i, \ i=1,\dots, N$ in $W$ such that $\cup_{i=1}^N B_{\hat{r}}(P-i)$  covers $W$, where $\hat{r}$ will be chosen later on. Dealing as before we can locally flatten the hypersurface $S$ by the diffeomorphism $\Phi$.  

Furthermore, by our hypothesis on $\partial \Sigma_1$ we have that there exists $\widetilde{r}>0$ depending on the a-priori data only such that for any $r\in (0,\widetilde{r})$ we have 
\begin{eqnarray}
\Phi^{-1}(B_{\theta_2 r}(O)\cap W)\subset \{y'\in B'_r(0): y_{n-1}=\widetilde{\psi}(y_1, \dots, y_{n-2})\} \ ,
\end{eqnarray}
where $\widetilde{\psi}\in C^{1, \alpha}(B''_r(0), \mathbb{R})$.

Let $\widetilde{\Psi} \in C^{1, \alpha}(B'_{\frac{r}{4M}}, \mathbb{R}^{n-1})$ be the map defined as follows 
\begin{eqnarray}
\widetilde{\Psi}(z'', z_{n-1})= (z'', z_{n-1} + \widetilde{\psi}(z'')) \ .
\end{eqnarray}
There exist constants $\theta_5, \theta_6$ such that $\theta_5>1>\theta_6>0$ depending on $r_0$ and $M$ only such that 
for any $\rho \in (0,\frac{r}{4M})$ it follows that 
\begin{eqnarray}
B'_{\theta_6\rho}(0)\subset \widetilde{\Psi}(B'_{\rho}(0))\subset B'_{\theta_5\rho}(0) \ .
\end{eqnarray}
The inverse map $\widetilde{\Psi}^{-1}\in  C^{1, \alpha}(B'_{\rho}, \mathbb{R}^{n-1})$ and it is defined by 
\begin{eqnarray}
\widetilde{\Psi}^{-1}(y'', y_{n-1})= (y'', y_{n-1} - \widetilde{\psi}(y'')) \ .
\end{eqnarray}  

Let then $x$ be a point in $B_{\frac{\theta_4\bar{r}}{8M}}(0)\cap S$ and let $y\in B_{\frac{\theta_4\bar{r}}{8M}}(0)\cap \partial \Gamma_1$. Arguing as for case i) we can deduce by Theorem \ref{rego} that there exists a constant $C_5>0$ depending on the a-priori data only such that 
\begin{eqnarray}
|\nabla u_1(x)|\le&& C_5 |x-y|^{\alpha -1}\le C_5 |x- \Pi(y)|^{\alpha -1}\le\\ \nonumber
 \le&&C_5 \mbox{dist}(x, W\cap B_{\frac{\theta_4\bar{r}}{8M}}(0))^{\alpha -1}
\end{eqnarray}

Hence let $z'\in B_{\frac{\widetilde{r}}{4M}}(0)$ be such that $x=\Phi(\widetilde{\Psi}(z'),0)$ and let $y_0\in W$ be such that $\mbox{dist}(x,W\cap B_{\frac{\theta_4\bar{r}}{8M}}(0))= |x-y_0|$ with $y_0=\Phi(\widetilde{\Psi}(z'',0),0)$, then we have that there exists a positive constant $C_6>0$ depending on the a-priori data only such that 
\begin{eqnarray}
|\nabla u_1(\Phi(\widetilde{\Psi}(z'),0))|\le C_6 |(z',0)- (z'',0,0)|^{\alpha-1} \ .
\end{eqnarray}
 
Hence dealing as for the case i) and fixing the radius $\hat{r}=\theta_2\theta_6\frac{\widetilde{r}}{16M}$ we get that also in this situation $|\nabla u_1(x)|\in L^1(S)$.

Hence, our claim \eqref{claim1} is proved. Combining \eqref{approssimazione}, \eqref{divergenceformula2} and \eqref{claim1} we get that 
\begin{eqnarray}\label{integration1}
&&\int_{\partial\Omega}(\partial_{\nu_{e}}u_1u_2 - \partial_{\nu_{e}}u_2u_1 )  d\sigma= \int_{\Gamma_1\setminus\Gamma_2}(u_2[\partial_{\nu_1}u_1]_1 - [u_1]_1\partial_{\nu_1} u_2) d\sigma +\\
&& + \int_{\Gamma_2\setminus\Gamma_1}( [u_2]_2\partial_{\nu_2}u_1- u_1[\partial_{\nu_2}u_2]_2)d\sigma + \int_{\Gamma_1\cap\Gamma_2}([u_2\partial_{\nu_1}u_1]_1 - [u_1\partial_{\nu_2}u_2]_2) d\sigma . \nonumber
\end{eqnarray}
Noticing that the integrals over $\Gamma_i\setminus\Sigma_i, \ i=1,2$ cancel each other since $[u_i]_i$ and $[\partial u_i]_i$ vanish there, the formula \eqref{integration1} can be simplified as follows
\begin{eqnarray}
&&\int_{\partial\Omega}(\partial_{\nu_{e}}u_1u_2 - \partial_{\nu_{e}}u_2u_1)   d\sigma = \int_{\Sigma_1\setminus\Sigma_2}(u_2[\partial_{\nu_1}u_1]_1 - [u_1]_1\partial_{\nu_1} u_2) d\sigma +\\
&& + \int_{\Sigma_2\setminus\Sigma_1} ([u_2]_2\partial_{\nu_2}u_1- u_1[\partial_{\nu_2}u_2]_2)d\sigma + \int_{\Sigma_1\cap\Sigma_2}([u_2\partial_{\nu_1}u_1]_1 - [u_1\partial_{\nu_2}u_2]_2) d\sigma . \nonumber
\end{eqnarray} 
Finally, the desired identity follows by selfadjointness of the Dirichlet to Neumann map $\Lambda:H^{\frac{1}{2}}(\partial \Omega)\rightarrow H^{-\frac{1}{2}}(\partial \Omega)$. \end{proof}
\end{section}

\begin{section}{Proof of Proposition \ref{propagerrore}}\label{properr}
In this section we shall provide the proof of the Proposition \ref{propagerrore} together with the related auxiliary results stated in Section \ref{singularsolutions}.
 \begin{subsection}{The $l$-distance}
\begin{proof}[Proof of Lemma \ref{claim1a}.]
We give a sketch of the proof based on three steps. 
\\
\begin{enumerate}
\item Being $\Sigma_i, \ i=1,2$ contained into a $C^{1,\alpha}$ hypersurface $\Gamma_i,\ i=1,2$ and by the arguments carried over in \cite[Proposition 3.6]{ABRV} we may infer that there exist number $d_0,\rho_0, d_0>0, 0<\rho_0<r_0$ for which the ratio $\frac{d_0}{r_0},\frac{\rho_0}{r_0}$ only depend on $\alpha$ and $M$, such that if we have 
\begin{eqnarray}
d_H(\Sigma_1,\Sigma_2)\le d_0
\end{eqnarray}
then for any $P\in \overline{\Sigma_1}$ we have that 
\begin{eqnarray}
\Gamma_i\cap B_{\rho_0}(P)=\{ x\in B_{\rho_0}(P) \ \mbox{s.t.} \ x_n=\varphi_i(x') \} \ \ , \  i=1,2
\end{eqnarray}
and $\|\varphi_1-\varphi_2 \|_{C^{1,\frac{\alpha}{2}}(B_{\rho_0}(P))}\le C r_0^{\frac{2+\alpha}{2+2\alpha}}d_0^{\frac{\alpha}{2+2\alpha}}$, where $C>0$ depends on $\alpha$ and $M$ only. 

Moreover, the functions $\max\{\varphi_1,\varphi_2 \}$ and $\min\{\varphi_1,\varphi_2 \}$ are Lipschitz with Lipschitz constants bounded by $L=2M + C r_0^{\frac{2+\alpha}{2+2\alpha}}d_0^{\frac{\alpha}{2+2\alpha}}$.

\item We recall that in our regularity hypothesis for any $P\in \overline{\Sigma_i}, \  i=1,2$ we can define two unit normals $\nu_i(P)$ and $-\nu_i(P)$ with $i=1,2$ according with the criterion stated in Section \ref{directproblem}. Moreover, for any point $P\in \overline{\Sigma_i}, \ i=1,2$ we can find a set $\gamma_i^{\frac{r_0}{2}}$ with $A=\frac{1}{2M}$ so that $\Sigma_i\cap \gamma_i^{\frac{r_0}{2}}=\emptyset$ and the axis of the cone $RC^i_{\frac{r_0}{2}}$ (within $\gamma_i^{\frac{r_0}{2}}$) is oriented along $\nu_i(P), \ i=1,2$. The same holds true for the unit normal $-\nu_i(P)\ i=1,2$.

\item Let us assume that $d_{H}(\Sigma_1,\Sigma_2)\le d_0$ and let $P\in \Sigma_1\cup\Sigma_2$, say for instance $P\in \Sigma_1$. Without loss of generality, we may assume that in the local representation of $\Sigma_1$ and $\Sigma_2$ as relative graphs $P$ belongs to the graph of $\max\{\varphi_1,\varphi_2 \}.$ We notice that up to replacing $\frac{r_0}{2}$ with $l_0=\min \{\frac{\rho_0}{2},\frac{d_0}{2}\}$ and $A$ with $\frac{1}{2L}$ we have that the set $\gamma_1^{l_0}$ introduced in the previous step is such that $\gamma_1^{l_0}\cap \Sigma_2=\emptyset$ and this concludes the proof. 
\end{enumerate}
\end{proof}
\begin{proof}[Proof of Lemma \ref{claim2}.]Let $d_1=\min\{\frac{d_0}{2},\frac{r_0}{4}\}$. If $d_{H}(\Sigma_1,\Sigma_2)\ge d_1$ then without loss of generality we may infer that there exists $Q\in \Sigma_2\cap ({E_1}_{d_1})$.\\
 We can find a continuous path $\gamma:[0,1] \rightarrow\ {E_1}_{d_1}$ such that $\gamma(0)\in \Gamma_r$ and $\gamma(1)=Q$. Let now $Q'=\gamma(\bar{t})$ with $\bar{t}=\inf\{t : \gamma(t) \not\in {\Sigma_2}_{l_1} \}$ and let $P'\in \Sigma_2$ such that $|P'-Q'|\le l_1$ with $l_1=\min\{\frac{l_0}{2},\frac{d_1}{2}\}$. Furthermore, let us denote with $\gamma'$ the restriction to $[0,\bar{t}]$ of $\gamma$. Up to a possible replacement of the constant $l_1$ we have that $P'\in \partial V_{l_1}$ (where the construction of such a $V_{l_1}$ is based on the path $\gamma'$ and $A=\frac{1}{2M}$). 
Finally we get 
\begin{eqnarray}
\mbox{dist}(P',\Sigma_1)\ge d_1 -l_1>0
\end{eqnarray} 
which implies that 
\begin{eqnarray}
d_{l_1}(\Sigma_1,\Sigma_2)\ge d_1 -l_1>0.
\end{eqnarray} 
The thesis follows with $C=d_1 -l_1$. 
\end{proof}

\begin{proof}[Proof of Proposition \ref{equiv}.]We distinguish two cases 
 \begin{enumerate}
 \item $d_{H}(\Sigma_1,\Sigma_2)\le d_0$,
 \item $d_{H}(\Sigma_1,\Sigma_2)>d_0$,
 \end{enumerate}
 where $d_0$ is the constant introduced in Lemma \ref{claim1}.
 
 {\bf{Case 1}}. In such a case we have by Lemma \ref{claim1} that $\Sigma_i \cap \partial V_{l_1}=\Sigma_i, \ i=1,2$ and hence $d_{H}(\Sigma_1,\Sigma_2)=d_{l_1}(\Sigma_1,\Sigma_2)$.
 
 {\bf{Case 2}}. In this situation we have that being $d_{H}(\Sigma_1,\Sigma_2)>d_1$, by Lemma \ref{claim2} we can infer that  $d_{l_1}(\Sigma_1,\Sigma_2)>C$. Hence we get 
 \begin{eqnarray}
 d_{H}(\Sigma_1, \Sigma_2)\le \frac{d_{H}(\Sigma_1,\Sigma_2)}{C}d_{l_1}(\Sigma_1,\Sigma_2)\le \frac{D}{C}d_{l_1}(\Sigma_1, \Sigma_2).
 \end{eqnarray}
 where $D$ is the a-priori bound on the diameter of $\Omega$ introduced in \eqref{diametro}.
 The thesis follows by choosing $C_1=\max\left\{1,\frac{D}{C}\right\}$.
 \end{proof}

\end{subsection}


\begin{subsection}{Proof of the upper bound on $f$}

\begin{proof}[Proof of Proposition \ref{propagerrore}]Let $\gamma$ be the simple arc in the definition of $V_{\frac{l_1}{2}}$ and let $x_1=\gamma(0)\in \Gamma_r$ with $0<r<\frac{l_1}{16}$.
Let us fix $\bar{y}\in \mathcal{S}_{\frac{r}{4}, 4r}$, where  $\mathcal{S}_{\frac{r}{4}, 4r}$ is the set introduced before.
Let us consider $f(\bar{y},\cdot)$ and let $\gamma$ be the simple arc in the definition of $V_{\frac{l_1}{2}}$, then we have that 
\begin{eqnarray}
\Delta_w f(\bar{y},w)=0 \ \ \mbox{in}\ \ \Omega_{\Sigma}^c 
\end{eqnarray}
 For $w\in \mathcal{S}_{\frac{r}{4}, 4r}$, by \eqref{errore}, \eqref{eguaglianzafond} and \eqref{asin1}
\begin{eqnarray}\label{stimaerrore2}
 |f(\bar{y},w)|\le C \|\Lambda_1 -\Lambda_2 \|= C\eps \ .
\end{eqnarray}
By the three spheres inequality for supremum norms of harmonic function we have that there exists a constant $0<\tau<1$ such that  
\begin{eqnarray}
\|f(\bar{y},\cdot)\|_{L^{\infty}(B_{\frac{3r}{2}}(x_1))}\le \|f(\bar{y},\cdot)\|^{\tau}_{L^{\infty}(B_{\frac{r}{2}}(x_1))}\|f(\bar{y},\cdot)\|^{1-\tau}_{L^{\infty}(B_{{2r}}(x_1))} \ .
\end{eqnarray}
We consider a point $\bar{w}$ lying on the arc $\gamma$ and such that $\bar{w}\in \gamma^{\frac{l_1}{2}}\setminus B_{2h}(Q)$. Let us define $\{x_i\}, \ i=1,\dots,s$ as follows, $x_1$ it has been already introduced, $x_{i+1}=\gamma(t_i)$ where $t_i=\max\{t. |\gamma(t)-x_i|=r \}$ if $|x_i-\bar{w}|>r$, otherwise let $i=s$ and stop the process. By construction, the balls $B_{\frac{r}{2}}(x_i)$ are pairwise disjoint, $|x_{i+1}-x_i|=r$ for $i=1,\dots,s-1$, $|x_s-\bar{w}|\le r$. By \eqref{diametro} we have that there exists a positive constant $\beta$ depending on the a priori data only such that $s<\beta$.
An iterated use of the three spheres inequality for $f(\bar{y},\cdot)$ gives that for any $0<\rho<r$ we have 
\begin{eqnarray}
\|f(\bar{y},\cdot)\|_{L^{\infty}(B_{\frac{\rho}{2}}(w))}\le \|f(\bar{y},\cdot)\|^{\tau^s}_{L^{\infty}(B_{\frac{\rho}{2}}(x_1))}\|f(\bar{y},\cdot)\|^{1-\tau^s}_{L^{\infty}(\gamma^{\frac{l_1}{2}}\setminus B_{2h}(Q))} \ .
\end{eqnarray}
We observe that for $w\in \gamma^{\frac{l_1}{2}}\setminus B_{2h}(Q)$ we have that 
\begin{eqnarray}
&&|S_{\Sigma_1}(\bar{y},w)|\le\nonumber\\
&\le& \int_{\Sigma_1\setminus\Sigma_2}(|R_2(x,w)[\partial_{\nu_1}R_1(x,\bar{y})]_1| +|[R_1(x,\bar{y})]_1\partial_{\nu_1} R_2(x,w) |)d\sigma +\nonumber\\
&+& \int_{\Sigma_1\cap\Sigma_2}|[R_2(x,w)\partial_{\nu_1}R_1(x,\bar{y})]_1|  d\sigma(x) \le\nonumber\\
&\le& C\left(\int_{\Sigma_1\setminus\Sigma_2}(|x-w|^{2-n} +|x-w|^{1-n} d\sigma(x) + \int_{\Sigma_1\cap\Sigma_2}|x-w|^{2-n})  d\sigma(x)\right)\le\nonumber\\
&\le& C h^{-1}\ .
\end{eqnarray}
Similarly, we get $|S_{\Sigma_2}(\bar{y},w)|\le C h^{-1}.$ Then we can conclude that 
\begin{eqnarray}\label{primastimaf}
|f(\bar{y},w)|\le C h^{-1} \ \ \mbox{for any} \ w\in \gamma^{\frac{l_1}{2}}\setminus B_{2h}(Q).
\end{eqnarray}
Hence, we have that by \eqref{stimaerrore2} and \eqref{primastimaf} 
\begin{eqnarray}
\|f(\bar{y},\cdot)\|_{L^{\infty}(B_{\frac{\rho}{2}}(w))}\le C \eps^{\tau^s}h^{\tau^s-1}.
\end{eqnarray}

We introduce the following set of quantities for $k\ge 2$
\begin{eqnarray}
&&\theta=\arctan{\left(\frac{1}{2A}\right)},\ \ \theta_1=\arctan\left(\frac{1}{4A} \right),\\
&&\chi=\frac{1-\sin (\theta_1)}{1+\sin(\theta_1)},\\
&&\lambda_1=\min\left\{\frac{r_0}{1+\sin(\theta)}, \frac{r_0}{3\sin(\theta)}\right\},\ \ \lambda_k=\chi\lambda_{k-1}\\
&&\rho_1=\lambda_1\sin(\theta_1)\ , \ \rho_k=\chi\rho_{k-1}\\
&&w_1=Q+\lambda_1\widetilde{\nu} \ , \ w_k=Q+\lambda_k\widetilde{\nu}.
\end{eqnarray}
By repeating the argument outlined in \cite[Proposition 3.5]{ADC} (see also \cite{ABRV}) and based on iterative application of the three spheres inequality over a chain of balls $B_{\rho_1}(w_1), \dots, B_{\rho_{k(r)}}(w_{k(r)})$ within the cone  we obtain that
\begin{eqnarray}\label{primopasso}
\|f(\bar{y},\cdot)\|_{L^{\infty}(B_{\rho_{k(r)}}(w_{k(r)}))}\le C \eps^{\tau^s\tau^{k(r)}-1}h^{\tau^s-1} \ ,
\end{eqnarray} 
where $k(r)$ is an integer such that $k(r)\sim \frac{\left|\log(\frac{r}{Al_1})\right|}{|\log(\chi)|}$ with $0<r<\min\{\frac{Al_1}{64}, r_0 \}$.

Let us now consider $f(y,w)$ as a function of $y$. First, we observe that 
\begin{eqnarray}
\Delta_y f(y,w)=0 \ \ \mbox{in}\ \ \Omega_{\Sigma}^c, \ \ \mbox{for any}\ \ w\in \Omega_{\Sigma}^c.
\end{eqnarray}
For $y,w \in \gamma^{\frac{l_1}{2}}\setminus B_{h}(Q), y\neq w$ we have that 
\begin{eqnarray}
|S_{\Sigma_1}({y},w)|&&\le\int_{\Sigma_1\setminus\Sigma_2}|R_2(x,w)[\partial_{\nu_1}R_1(x,{y})]_1| +|[R_1(x,{y})]_1\partial_{\nu_1}R_2(x,w) |d\sigma +\nonumber\\
&&+ \int_{\Sigma_1\cap\Sigma_2}|[R_2(x,w)\partial_{\nu_1}R_1(x,{y})]_1|  d\sigma(x) \le\nonumber\\
&&\le C\int_{\Sigma_1\setminus\Sigma_2}|x-w|^{2-n}|x-y|^{2-n} +|x-y|^{2-n}|x-w|^{1-n} d\sigma(x) + \nonumber\\
&& + C\int_{\Sigma_1\cap\Sigma_2}|x-w|^{2-n} |x-y|^{2-n} d\sigma(x) \nonumber
\end{eqnarray}
Moreover, dealing as in Proposition \ref{stimeasintotiche} we get 
\begin{eqnarray}
&&|S_{\Sigma_1}({y},w)|\le 
\left\{
\begin{array}{rl}
&h^{-1}|\log|y-w||\ \ \mbox{if}\ n=3 \ ,\\
& h^{-1}|y-w|^{3-n}\ \ \ \ \mbox{if}\ n>3\ .
\end{array}
\right.
\end{eqnarray}
and similarly for $S_{\Sigma_2}({y},w)$. Therefore, 
\begin{eqnarray}\label{secondopasso}
|f(y,w)|\le \left\{
\begin{array}{rl}
&h^{-1}|\log|y-w||\ \ \mbox{if}\ n=3 \ ,\\
& h^{-1}|y-w|^{3-n}\ \ \ \ \mbox{if}\ n>3\ .
\end{array}
\right.
\end{eqnarray}
with $y,w \in \gamma^{\frac{l_1}{2}}\setminus B_{h}(Q)$.
Moreover, for $y\in \mathcal{S}_{\frac{r}{4}, 4r}$ and $w\in \gamma^{\frac{l_1}{2}}\setminus B_{h}(Q)$ using \eqref{primopasso} we have 
\begin{eqnarray}\label{propag1}
|f(y,w)|\le C  \eps^{\tau^s\tau^{k(h)}-1}h^{\tau^s-1} \ .
\end{eqnarray}
Proceeding as before, let us fix $w\in\gamma^{\frac{l_1}{2}}$ such that $\mbox{dist}(w,Q)=h$ and $\widetilde{y}\in \mathcal{S}_{\frac{r}{4}, 4r}$. 
Again, taking $y_1=Q+\lambda_1\widetilde{\nu}$ and using iteratively the three spheres inequality we have 
\begin{eqnarray}\label{tres}
\|f(\cdot,w) \|_{L^{\infty}(B_{\frac{r_0}{2}}(y_1))}\le \|f(\cdot,w) \|^{\tau^s}_{L^{\infty}(B_{\frac{r_0}{2}}(\widetilde{y}))}\|f(\cdot,w) \|^{1-\tau^s}_{L^{\infty}(\gamma^{\frac{l_1}{2}}\setminus B_{2h}(Q))}
\end{eqnarray}
where $\tau$ and $s$ are the numbers established previously. 
We now distinguish two cases
\begin{description}
\item [i)] $n=3$
\item [ii)] $n>3$
\end{description}
We begin by analyzing the case $i)$.

By combining \eqref{secondopasso}, \eqref{propag1} and \eqref{tres} we have 
\begin{eqnarray}
\|f(\cdot,w) \|_{L^{\infty}(B_{\frac{r_0}{2}}(y_1))}\le C h^{\tau^{2s}-1}|\log h|^{1-\tau^s}\eps^{\tau^{2s}\tau^{k(h)-1}} \ .
\end{eqnarray}
We observe that for $h$ sufficiently small we have that $|\log h|^{1-\tau^s}\le h^{-\frac{1}{2}\tau^{2s}}$. And hence from the above estimate we deduce that 
\begin{eqnarray}
\|f(\cdot,w) \|_{L^{\infty}(B_{\frac{r_0}{2}}(y_1))}\le C h^{\frac{1}{2}\tau^{2s}-1}\eps^{\tau^{2s}\tau^{k(h)-1}} \ .
\end{eqnarray}

Once more, we apply iteratively the three spheres inequality over a chain of balls contained in the cone $RC_{\frac{l_1}{2}}(\gamma(1))$ and we obtain
\begin{eqnarray}
\|f(y,w) \|_{L^{\infty}(B_{\rho_{k(h)}}(y_k(h)))}\le C h^{( \frac{1}{2}\tau^{2s}-1)(1-\tau^{k(h)-1})}\eps^{\tau^{2s}\tau^{2(k(h)-1)}}.
\end{eqnarray}
From the above inequality, choosing $y=w=Q+2h\widetilde{\nu}$ we have that 
\begin{eqnarray}
|f(y,y)|\le C h^{-B}\eps^{\tau^{2s}\tau^{2(k(h)-1)}}
\end{eqnarray}
where $B=1-\frac{1}{2}\tau^{2\beta}$. We observe that, for $0<h<cr_0$ with $0<c<1$ depending on the a-priori data only, we have $k(h)\le c|\log h|=-c\log h$, so we deduce that $$\tau^{2k(h)}\ge \exp(-2c\log h\log \tau)= h^F$$ with $F=2c|\log \tau|$.

Finally we obtain that 
\begin{eqnarray}\label{end}
|f(y,y)|\le h^{-B}\eps^{\tau^{2\beta}\tau^{(2k(h)-2)}}\le C  h^{-B}\eps^{\tau^{2\beta-2}h^F}.
\end{eqnarray}
Hence the thesis follows with $\bar{h}=cr_0, C'=\tau^{2\beta-2}$.

For the case $ii)$ the estimate \eqref{end} holds true with $B=n-2 - \tau^{\beta}(n-3)-\tau^{2\beta}$ the other constants remaining the same and can be achieved by adapting the argument above.
\end{proof}
\end{subsection}
\end{section}

\begin{section}{Proof of Proposition \ref{stimealtobasso}}\label{stimeab}
We premise the proof of Proposition \ref{stimealtobasso} with several preliminary results. 
\begin{lemma}\label{soluzionepositiva}
There exists a constant $C>0$ depending on the a-priori data only, such that the weak solution $v\in H^1(\Omega\setminus \Sigma)$ to the problem 
\begin{equation}\label{Positiva}
\left\{
\begin{array}
{lcl}
\Delta v=0\ ,& \mbox{in $\Omega\setminus\overline{\Sigma}$ ,}
\\
 \partial_{\nu}v= 1\ ,   & \mbox{on $\partial\Omega$ ,}
\\
\partial_{\nu^{\pm}}v^{\pm} -\gamma^{\pm} v^{\pm} =0 \ , & \mbox{on $\Sigma^{\pm}$ .}
\end{array}
\right.
\end{equation}
is such that $v(x)\ge C$ in $\overline{\Omega}$.
\end{lemma}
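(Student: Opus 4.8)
The plan is to first show that $v$ is nonnegative and continuous up to $\overline{\Omega}$, then to extract a pointwise lower bound for $v$ at one location controlled by the a-priori data via a global integral identity, and finally to propagate this bound to all of $\overline{\Omega}$ by Harnack's inequality and boundary continuity. For the first part I would start from the weak formulation of \eqref{Positiva},
$$\int_\Omega \nabla v\cdot\nabla\phi + \int_\Sigma \big(\gamma^+ v^+\phi^+ + \gamma^- v^-\phi^-\big)\,d\sigma = \int_{\partial\Omega}\phi\,d\sigma \qquad \forall\,\phi\in H^1(\Omega\setminus\Sigma),$$
and test it with $\phi=\min\{v,0\}$. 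Since $\nabla v\cdot\nabla\phi=|\nabla\phi|^2$ and $v^\pm\phi^\pm=(\phi^\pm)^2\ge 0$, one gets $\int_\Omega|\nabla\phi|^2\le\int_{\partial\Omega}\phi\le 0$, so $\nabla\phi\equiv 0$ and $\int_{\partial\Omega}\phi=0$; as $\Omega\setminus\overline\Sigma$ is connected, $\phi$ is a constant with vanishing boundary average and $\phi\le0$, hence $\phi\equiv0$, i.e.\ $v\ge0$ a.e. By the crack-regularity part of the proof of Theorem \ref{rego}, which is local near $\Sigma$ and thus applies to \eqref{Positiva}, $v$ is H\"older continuous up to $\Sigma$ (edge $\partial\Sigma$ included); by standard boundary regularity for harmonic functions with bounded Neumann datum on the Lipschitz domain $\Omega$ it is H\"older continuous up to $\partial\Omega$. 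Thus $v\in C(\overline\Omega)$, $v\ge0$ on $\overline\Omega$, and by the strong maximum principle together with $\partial_\nu v=1\not\equiv0$ one has $v>0$ in $\Omega\setminus\overline\Sigma$.

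Next, testing the weak formulation with $\phi\equiv1$ gives the identity
$$\int_\Sigma\big(\gamma^+v^+ + \gamma^-v^-\big)\,d\sigma = |\partial\Omega|.$$
Writing $\mu=\max_{\overline\Omega}v$ and using $0\le\gamma^\pm\le\bar{\gamma}$, this yields $|\partial\Omega|\le 2\bar{\gamma}\,\mu\,|\Sigma|$. The $C^{0,1}$ regularity of $\partial\Omega$ with constants $r_0,M$ gives $|\partial\Omega|\ge c\,r_0^{\,n-1}$, while $|\Sigma|\le|\Gamma|\le C$ by the $C^{1,\alpha}$ bound on $\Gamma$ and $\mathrm{diam}(\Omega)\le D$; hence $\mu\ge c_3>0$ with $c_3$ depending only on the a-priori data. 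Let $x^*\in\overline\Omega$ be a maximum point; by the maximum principle and $\partial_\nu v=1$ one has $x^*\in\partial\Omega\cup\overline\Sigma$.

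The delicate point is that no a-priori \emph{upper} bound for $v$ is available — $v$ blows up as $\gamma\to0$ — so I would work with $w=v/\mu\in[0,1]$. Then $w$ solves \eqref{Positiva} with Neumann datum $1/\mu\le1/c_3$ and the same Robin coefficients, and its energy identity gives $\|\nabla w\|_{L^2}^2+\int_\Sigma\gamma^\pm|w^\pm|^2=\mu^{-1}\int_{\partial\Omega}w\le c_3^{-1}|\partial\Omega|$, whence $\|w\|_{H^1(\Omega\setminus\Sigma)}\le C$ with $C$ depending only on the a-priori data. Interior estimates, boundary regularity near $\partial\Omega$, and the crack-regularity estimate of Theorem \ref{rego} near $\Sigma$ (edge included) — all applied to $w$ — then give $\|w\|_{C^{0,\alpha}(\overline\Omega)}\le C$. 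Since $w(x^*)=1$, there is a controlled radius $\rho_0>0$ with $w\ge 1/2$, hence $v\ge c_3/2$, on $B_{\rho_0}(x^*)\cap\overline\Omega$, which contains a ball of controlled radius lying in $\Omega\setminus\overline\Sigma$. Finally, as $v$ is positive and harmonic in the connected set $\Omega\setminus\overline\Sigma$, a Harnack chain of balls of controlled radii and controlled cardinality — constructed by the same quantitative connectedness arguments used elsewhere in the paper (cf.\ the sets $\gamma^l$ of Section \ref{singularsolutions}), relying on $\mathrm{diam}(\Omega)\le D$ and the $C^{0,1}$, $C^{1,\alpha}$ bounds — carries $v\ge c_3/2$ to every compact subset of $\Omega$ lying outside a fixed tubular neighborhood of $\Sigma\cup\partial\Omega$; the H\"older continuity of $v$ up to $\partial\Omega$ and up to $\Sigma$ (edge included) then extends it to all of $\overline\Omega$, giving $v\ge C>0$ with $C$ depending only on the a-priori data.

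The main obstacle is precisely the passage from the integral bound on $\max v$ to a pointwise lower bound at a location controlled by the a-priori data, with no upper bound on $v$ at hand; the normalization $w=v/\mu$ is what makes this work, since $w$ has bounded $H^1$ norm and bounded Neumann datum uniformly in $\gamma$. The only other ingredient is the quantitative connectedness of $\Omega\setminus\overline\Sigma$, which is of the same nature as the chain-of-balls constructions already developed in the paper.
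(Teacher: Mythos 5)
Your first two steps are sound and close to the paper's: nonnegativity of $v$ by testing with the negative part is exactly the paper's argument, and the flux identity $\int_\Sigma(\gamma^+v^++\gamma^-v^-)\,d\sigma=|\partial\Omega|$ together with the normalization $w=v/\mu$ is a reasonable way to produce a ball of controlled radius and location on which $v\ge c_3/2$. The genuine gap is in your last step, the passage from interior bounds to a bound on all of $\overline\Omega$. A Harnack chain that reaches points at distance $\delta$ from $\Sigma$ must use balls whose radii shrink like the distance to the crack, hence has on the order of $\log(1/\delta)$ links, and the resulting lower bound degenerates like $c\,\delta^{\beta}$, where $\beta$ is dictated by the Harnack constant and has no a priori relation to the H\"older exponent $\alpha$ of $w$. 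H\"older continuity only recovers an error $O(\delta^{\alpha})$: at a crack point $z$ where $w(z)=\epsilon$ you get $c\,\delta^{\beta}\le \epsilon+C\delta^{\alpha}$, which yields a positive lower bound for $\epsilon$ only if $\beta<\alpha$ — not guaranteed by the a priori data. (Working with $v$ instead of $w$ is worse, since the H\"older seminorm of $v$ is $\mu$ times that of $w$ and $\mu$ has no a priori upper bound.) So ``Harnack chains away from $\Sigma$ plus uniform continuity'' cannot, by itself, push the lower bound up to $\overline\Sigma$.

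What is needed — and what the paper uses — is a Harnack-type estimate valid \emph{up to the crack}, exploiting the sign of the Robin condition $\partial_{\nu^\pm}v^\pm=\gamma^\pm v^\pm\ge0$. The paper first locates the minimum point $x_0$ of $v$ on $\overline\Sigma$ via Giraud's maximum principle \cite{Giraud} (the datum $\partial_\nu v=1$ rules out $\partial\Omega$, and interior minima are excluded), and then applies the weak Harnack inequality at the boundary for nonnegative solutions of the Robin problem, \cite[Lemma 3.2]{Si}, to get $v(x_0)\ge C\|v\|_{L^2}$ over a half-ball at $x_0$; iterated interior Harnack as in \cite[Lemma 3.3]{Si} then bounds that $L^2$ norm from below. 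Your normalization $w=v/\mu$ could substitute for part of the quantitative positivity input, but without a boundary weak Harnack (or an equivalent device, e.g.\ Moser iteration from below up to $\Sigma$ using $\gamma^\pm\ge0$, as in the reflection arguments of Theorem \ref{rego}) the lower bound does not reach the points of $\overline\Sigma$, which is precisely where the conclusion of Lemma \ref{soluzionepositiva} is needed.
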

\begin{proof}
The existence and the uniqueness of the weak solution  $v\in H^1(\Omega\setminus \Sigma)$ to the problem \eqref{Positiva} is a consequence of standard theory on the boundary value problem for the Laplace equation and the non negativity of the coefficients $\gamma^+$ and $\gamma^-$.
We understand that $v$ satisfies 
\begin{eqnarray}\label{weakpos}
\int_{\Omega\setminus\overline{\Sigma}}\nabla v\cdot\nabla \varphi \ dx + \int_{\Sigma}\gamma^+v^+\varphi^+ d\sigma + \int_{\Sigma}\gamma^-v^-\varphi^- d\sigma = 
\int_{\partial\Omega}\varphi \ d\sigma \ \
\end{eqnarray}
for any $\varphi\in H^1(\Omega\setminus \Sigma)$.

Let $v_- \in H^1(\Omega\setminus \Sigma)$ be the negative part of $v$, namely $v_-= - \min \{v, 0 \}$. Choosing $\varphi=v_-$ in \eqref{weakpos} we have that 
\begin{eqnarray}
\int_{\Omega\setminus\overline{\Sigma} \cap \{v\le0 \}}|\nabla v_-|^2 \ dx + \int_{\Sigma \cap \{v\le0 \}}\gamma^+|v^+_-|^2 d\sigma + \int_{\Sigma \cap \{v\le0 \}}\gamma^-|v^-_-|^2 d\sigma\le 0 \ .
\end{eqnarray}
Then by the Poincar\'{e} inequality we deduce that $v_- \equiv 0$ a.e. in $\overline{\Omega}$ and hence $v(x)\ge 0$ a.e. in $\overline{\Omega}$.

Let $x_0\in \partial (\Omega\setminus \Sigma)$ be such that $\displaystyle{\min_{x\in \overline{\Omega}} v(x)}= v(x_0)$. 

As a consequence of the Giraud's maximum principle (see \cite[Theorem 5]{Giraud}) we have that $x_0\in \overline{\Sigma}$. Without loss of generality, we may assume that if $x_0\in \Sigma$ then $\displaystyle{\min_{x\in \overline{\Omega}} v(x)}= v^+(x_0)$. 

Let us denote for any $0<\rho<r_0$ with 
$$
\Delta_{\rho}(x_0)=
\left\{
\begin{array}{rl}
&B_{\rho}(x_0)\cap \Omega^+ \ \ \mbox{if}\ \ x_0\in \Sigma\\
&B_{\rho}(x_0)\setminus \overline{\Sigma}\ \ \ \ \mbox{if}\ \ x_0\in \partial\Sigma .
\end{array}
\right.
$$ 
By the weak Harnack inequality at the boundary (see \cite[Lemma 3.2]{Si}) and by the non negativity of $v$ we have that there exist a radius $\widetilde{r}, \ 0<\widetilde{r}< r_0$ and a constant $C>0$ depending on the a-priori data only such that for any $0<\rho<\widetilde{r}$ we have 
\begin{eqnarray}
v(x_0)=\displaystyle{\min_{x\in \Delta_{\rho}(x_0)} v(x)}\ge C \|v\|_{L^2(\Delta_{2\rho}(x_0))} \ .
\end{eqnarray}
Moreover, dealing as in the proof of Lemma 3.3 of \cite{Si} and relying on an iterated use of the Harnack inequality we can conclude that there exists a constant $C>0$ depending on the a priori data only such that $v(x_0)\ge C$.
\end{proof}

We now introduce the following notion. 
Let $\gamma_1>0$ be a constant. We shall refer to $R_{\Omega}$ as the following Robin function 
\begin{equation}\label{Ro}
\left\{
\begin{array}
{lcl}
\Delta_x R_{\Omega}(x,y)=- \delta(x-y)\ ,&& \mbox{in $\Omega\setminus\overline{\Sigma}$ ,}
\\
\partial_{\nu}R_{\Omega}(\cdot, y)+ \gamma_1 R_{\Omega}(\cdot, y)=0\ ,   && \mbox{on $\partial\Omega$ ,}
\\
\partial_{\nu^{\pm}}R_{\Omega}^{\pm}(\cdot, y) - \gamma^{\pm}(\cdot) R_{\Omega}^{\pm}(\cdot, y) =0 \ , && \mbox{on $\Sigma^{\pm}$ ,}
\end{array}
\right.
\end{equation}
with $y\in \Omega\setminus \Sigma.$

\begin{lemma}\label{boundR}
Let $R$ be the solution to \eqref{R} and let $0<r<r_0, \ y\in \Omega\setminus \overline{\Sigma}$ be such that $B_{2r}(y)\subset \Omega\setminus\overline{\Sigma}$ and $\mbox{dist}(y,\partial \Omega)> r_0$. Then there exists a constant $c_r>0$ depending on the a-priori data and on $r$ only such that 
\begin{eqnarray}
\|R(\cdot,y)\|_{L^{\infty}(\Omega\setminus B_r(y))}\le c_r \ .
\end{eqnarray} 
\end{lemma}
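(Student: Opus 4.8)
The plan is to bound $R(\cdot,y)$ away from its pole by comparing it with the Robin function $R_\Omega$ of the auxiliary mixed problem \eqref{Ro}, for which an a priori $L^\infty$ bound away from the pole is available by standard elliptic estimates. First I would observe that the difference $h(x) = R(x,y) - R_\Omega(x,y)$ is harmonic in $\O\setminus\overline{\Sigma}$, satisfies the homogeneous Robin condition $\partial_{\nu^\pm}h^\pm - \gamma^\pm h^\pm = 0$ on $\Sigma^\pm$ (since both $R$ and $R_\Omega$ do), and on $\partial\O$ satisfies $\partial_\nu h = \partial_\nu R - \partial_\nu R_\Omega = -\frac{1}{|\partial\widetilde\O|} + \gamma_1 R_\Omega(\cdot,y)$, which is a bounded Neumann datum. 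Thus $h$ solves a mixed Neumann–Robin problem with bounded data and no singularity, so by the well-posedness and regularity of such problems (together with the maximum principle ingredients already quoted, i.e.\ the Giraud maximum principle and the weak Harnack inequality at the boundary, exactly as in Lemma \ref{soluzionepositiva}) one gets $\|h\|_{L^\infty(\O\setminus\overline\Sigma)} \le C$ with $C$ depending on the a priori data only. Consequently it suffices to bound $R_\Omega(\cdot,y)$ on $\O\setminus B_r(y)$.

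Next I would treat $R_\Omega(\cdot,y)$ directly. Away from the pole, $R_\Omega(\cdot,y)$ is a nonnegative (by the same argument as in Lemma \ref{soluzionepositiva}, using $\gamma_1>0$ and $\gamma^\pm\ge 0$) harmonic function in $\O\setminus(\overline\Sigma\cup B_r(y))$ satisfying the homogeneous Robin conditions on $\Sigma^\pm$ and on $\partial\O$. On the sphere $\partial B_r(y)$ we need a bound on $R_\Omega(\cdot,y)$; since $B_{2r}(y)\subset\O\setminus\overline\Sigma$, inside the annulus $B_{2r}(y)\setminus B_r(y)$ the function $R_\Omega(x,y)-\Phi(x-y)$ is harmonic (where $\Phi$ is the fundamental solution), and combining an energy estimate for $R_\Omega(\cdot,y)$ on $\O\setminus B_r(y)$ (obtained by testing the weak formulation of \eqref{Ro} with a cutoff that is $1$ outside $B_{2r}(y)$ and $0$ near $y$, using that the fundamental solution is in $H^1$ of the annulus away from the pole) with interior elliptic estimates yields $\|R_\Omega(\cdot,y)\|_{L^\infty(\partial B_r(y))}\le c_r$. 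Then, since $R_\Omega(\cdot,y)$ is harmonic and nonnegative in $\O\setminus(\overline\Sigma\cup\overline{B_r(y)})$ with the homogeneous Robin boundary conditions on the remaining parts of the boundary, the maximum principle (again in the form of the Giraud principle, which forces interior and $\partial\O$-Robin-boundary maxima to be controlled by the value on $\partial B_r(y)$, using $\gamma_1>0$ on $\partial\O$ and $\gamma^\pm\ge 0$ on $\Sigma$) gives $\|R_\Omega(\cdot,y)\|_{L^\infty(\O\setminus B_r(y))}\le c_r$.

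Finally, combining the two bounds, $\|R(\cdot,y)\|_{L^\infty(\O\setminus B_r(y))}\le \|h\|_{L^\infty} + \|R_\Omega(\cdot,y)\|_{L^\infty(\O\setminus B_r(y))}\le c_r$, with $c_r$ depending only on $r$ and the a priori data; the hypothesis $\mbox{dist}(y,\partial\O)>r_0$ is used to keep the energy and regularity constants in the annulus estimate uniform in $y$, and the assumption $B_{2r}(y)\subset\O\setminus\overline\Sigma$ guarantees the annulus $B_{2r}(y)\setminus B_r(y)$ avoids both $\Sigma$ and $\partial\O$. The main obstacle I anticipate is the bound on $\partial B_r(y)$: one must carefully control the non-singular part of $R_\Omega(\cdot,y)$ near its pole, which requires a suitable decomposition $R_\Omega(\cdot,y)=\Phi(\cdot-y)+H(\cdot,y)$ with $H$ harmonic near $y$ and uniformly bounded there, and this is where the quantitative dependence on $r$ and the a priori data must be tracked; everything else is an application of the maximum principle machinery already set up for Lemma \ref{soluzionepositiva}.
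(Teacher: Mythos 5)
Your overall skeleton is the same as the paper's: compare $R(\cdot,y)$ with the auxiliary Robin function $R_{\Omega}(\cdot,y)$ of \eqref{Ro}, bound $R_{\Omega}$ away from the pole, and then bound the harmonic difference, which satisfies the homogeneous impedance condition on $\Sigma^{\pm}$ and a Neumann condition on $\partial\Omega$. However, there is a genuine error in your first step: you claim that on $\partial\Omega$ one has $\partial_{\nu}h=-\frac{1}{|\partial\widetilde{\Omega}|}+\gamma_1 R_{\Omega}(\cdot,y)$. The condition $\partial_{\nu}R(\cdot,y)=-\frac{1}{|\partial\widetilde{\Omega}|}$ in \eqref{R} is imposed on $\partial\widetilde{\Omega}$, not on $\partial\Omega$: $R$ is defined on the larger set $\widetilde{\Omega}\supset\supset\Omega$ and is simply harmonic across $\partial\Omega$, so $\partial_{\nu}R(\cdot,y)|_{\partial\Omega}$ is not prescribed and must itself be estimated. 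This is exactly the point where the hypothesis $\mbox{dist}(y,\partial\Omega)>r_0$ enters in the paper: one bounds $\|\partial_{\nu}R(\cdot,y)\|_{L^{\infty}(\partial\Omega)}\le c\,r_0^{1-n}$ by asymptotic estimates on $\nabla R$ near $\partial\Omega$, and only then does the difference $u_{\Omega}=R-R_{\Omega}$ solve a Neumann--Robin problem with bounded data. As written, your bound on $h$ is unjustified; moreover, since the Neumann datum contains $\gamma_1 R_{\Omega}(\cdot,y)|_{\partial\Omega}$, the bound on $R_{\Omega}$ must logically come first, so your two steps need to be reordered.

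Concerning the bound on $R_{\Omega}$, your route (nonnegativity plus a Giraud-type maximum principle, reducing everything to a bound on $\partial B_r(y)$) is genuinely different from the paper's, which proceeds by duality: the $L^{\infty}$ estimate for solutions of \eqref{u} with source in $L^{\frac{n+1}{2}}$ together with the representation formula \eqref{repr} yields $\|R_{\Omega}(\cdot,y)\|_{L^{\frac{n+1}{n-1}}(\Omega\setminus\overline{\Sigma})}\le C$, and then local boundedness (weak Harnack) gives the sup bound on $\Omega\setminus B_r(y)$. Your alternative could in principle work, but two of its ingredients are left at the level of assertion and are not routine: (a) the bound on $\partial B_r(y)$ — testing the weak formulation with a plain cutoff does not produce the needed energy or sup bound on the regular part $H=R_{\Omega}-\Phi$ near the pole; one needs a Gr\"uter--Widman type argument, which is precisely the work that the paper's duality step replaces; and (b) the maximum principle up to the crack edge $\partial\Sigma$, where $\Omega\setminus\overline{\Sigma}$ has no interior ball and Giraud/Hopf arguments do not apply directly (the paper avoids this in Lemma \ref{soluzionepositiva} by the weak Harnack inequality at the boundary; you would need a similar device, e.g.\ a capacity/removability argument for the $(n-2)$-dimensional edge). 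Until the $\partial\Omega$ datum is corrected and these two points are supplied, the proof is incomplete.
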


\begin{proof}
Let $f\in L^{\frac{n+1}{2}}(\Omega)$ and let $u\in H^1(\Omega\setminus\overline{\Sigma})$ be the weak solution to 

\begin{equation}\label{u}
\left\{
\begin{array}
{lcl}
\Delta u= f \ ,& \mbox{in $\Omega\setminus\overline{\Sigma}$ ,}
\\
\partial_{\nu}u + \gamma_1 u=0\ ,   & \mbox{on $\partial\Omega$ ,}
\\
\partial_{\nu^{\pm}}u^{\pm} - \gamma^{\pm} u^{\pm}=0 \ , & \mbox{on $\Sigma^{\pm}$ .}
\end{array}
\right.
\end{equation}
By Green's second formula the solution $u$ can be represented as follows 
\begin{eqnarray}\label{repr}
u(y)= - \int_{\Omega}R_{\Omega}(x,y)f(x)dx \  
\end{eqnarray}
where $y\in \Omega\setminus \overline{\Sigma}$.
By the argument in \cite[Section 8.5]{gt} it follows that there exists a constant $C>0$ depending on the a-priori data only such that 
\begin{eqnarray}\label{globbound}
\|u\|_{L^{\infty}(\Omega\setminus \overline{\Sigma})}\le C \left(\|u\|_{L^2(\Omega\setminus\overline{\Sigma})} + \|f\|_{L^{\frac{n+1}{2}}(\Omega\setminus\overline{\Sigma})} \right) .
\end{eqnarray}
Moreover combining the weak formulation of problem \eqref{u}, the Poincar\'{e} and the H\"{o}lder inequalities we have that there exists a constant $C>0$ depending on the a-priori data only such that 
\begin{eqnarray}
\|u\|^2_{L^2(\Omega\setminus\overline{\Sigma})}\le C \|f\|_{L^{\frac{n+1}{2}}(\Omega\setminus\overline{\Sigma})}\cdot\|u\|_{L^{\frac{n+1}{n-1}}(\Omega\setminus\overline{\Sigma})} \ .
\end{eqnarray} 
Furthermore, being $1<\frac{n+1}{n-1}\le 2$ we may infer that 
\begin{eqnarray}
\|u\|_{L^2(\Omega\setminus\overline{\Sigma})}\le  C \|f\|_{L^{\frac{n+1}{2}}({\Omega\setminus\overline{\Sigma}})} \ .
\end{eqnarray} 
where $C>0$ is a constant depending on the a-priori data only. 
Hence inserting the above estimate in \eqref{globbound} we get that 
\begin{eqnarray}\label{stimainf}
\|u\|_{L^{\infty}(\Omega\setminus \overline{\Sigma})}\le C \|f\|_{L^{\frac{n+1}{2}}(\Omega\setminus\overline{\Sigma})} \ .
\end{eqnarray}
where $C>0$ is a constant depending on the a-priori data only. 

Hence \eqref{repr} and \eqref{stimainf} yield to
\begin{eqnarray}\label{repr2}
\|R_{\Omega}(\cdot,y)\|_{L^{\frac{n+1}{n-1}}(\Omega\setminus\overline{\Sigma})}&=& \sup_{\displaystyle{\|f\|_{L^{\frac{n+1}{2}}(\Omega\setminus\overline{\Sigma})}=1}}\left|\int_{\Omega}f(x)R_{\Omega}(x,y)dx \right| \ .\nonumber
\end{eqnarray}
Finally by the weak Harnack inequality (see \cite[Section 8.6]{gt}), we have that there exists a constant $C_r$ depending on the a-priori data and on $r$ only such that 
\begin{eqnarray}\label{Har}
\|R_{\Omega}(\cdot, y)\|_{L^{\infty}(\Omega\setminus B_r(y))}\le C_r \|R(_{\Omega}\cdot, y) \|_{L^{\frac{n+1}{n-1}}(\Omega\setminus\overline{\Sigma})} \ .
\end{eqnarray} 

Combining \eqref{repr2} and \eqref{Har} we obtain that 
\begin{eqnarray}\label{star}
\|R_{\Omega}(\cdot, y)\|_{L^{\infty}(\Omega\setminus B_r(y))}\le \widetilde{C_r}
\end{eqnarray}
where $\widetilde{C_r}>0$ is a constant depending on the a-priori data only. 
Finally let us now consider the harmonic function $u_{\Omega}(\cdot)=R(\cdot, y)-R_{\Omega}(\cdot, y)$ in $H^1(\Omega\setminus\overline{\Sigma})$. It follows that $u_{\Omega}$ solves 

\begin{equation}
\left\{
\begin{array}
{lcl}
\Delta u_{\Omega}= 0 \ ,& \mbox{in $\Omega\setminus\overline{\Sigma}$ ,}
\\
\partial_{\nu}u_{\Omega}=\partial_{\nu}R(\cdot,y) +\gamma_1 R_{\Omega}(\cdot, y) \ ,   & \mbox{on $\partial\Omega$ ,}
\\
\partial_{\nu^{\pm}}u_{\Omega}^{\pm} - \gamma^{\pm} u_{\Omega}^{\pm}=0 \ , & \mbox{on $\Sigma^{\pm}$ .}
\end{array}
\right.
\end{equation}
By \eqref{star} and by standard asymptotic estimate on the gradient of $R(\cdot,y)$ we get
\begin{eqnarray}
\|\partial_{\nu}u_{\Omega}\|_{L^{\infty}(\partial \Omega)}\le c r_0^{1-n} + \gamma_0  \widetilde{C_r} \ .
\end{eqnarray} 
Classical estimates for harmonic functions leads to the existence of a constant $C_r>0$ depending on the a-priori data only such that 
\begin{eqnarray}\label{dstar}
\|u_{\Omega}\|_{L^{\infty}(\Omega\setminus \overline{\Sigma})}\le C_r \ .
\end{eqnarray}
Hence combining \eqref{star} and \eqref{dstar} we obtain the thesis.
\end{proof}

We now introduce the following notion.

Let $\gamma_0\ge 0$ be a constant. We shall denote with $R_0$ the half space Robin function 
\begin{equation}\label{R0}
\left\{
\begin{array}
{lcl}
\Delta_x R_0(x,y)= -\delta(x-y)\ ,&& \mbox{in $\mathbb{R}^n\setminus \{x_n=0 \}$ ,}
\\
\partial_{\nu^{\pm}}R_0^{\pm}(\cdot, y) - \gamma_0 R_0^{\pm}(\cdot, y) =0 \ , && \mbox{on $\{x_n=0 \}$ ,}
\end{array}
\right.
\end{equation}
with $y_n\neq 0$ and $\nu^+=(0,\dots,0,-1)$ and $\nu^-=(0,\dots,0,1)$.

\begin{proposition}\label{stimeasintotiche}
Let $\Sigma$ be a crack satisfying the a-priori assumption stated above. Let $\rho>0$ and let $x\in \Sigma^{\rho}$. Then there exists a constant $c_1,c_2,c_3>0$ depending on the a-priori data only such that 
\begin{description}
\item [i)]\begin{eqnarray}\label{asin1}
|\nabla_z R(z,y)|\le {c_0} |z-y|^{1-n}\ \ , \ \ \ \ \ \ \ \ \ \ \ \ \ \  \ \ \ \ \ \ \ \ \ \ \ 
\end{eqnarray}
for any $y,z\in \mathbb{R}^n$.
\item [ii)] \begin{eqnarray}\label{asin2}
|R(z,y)-R_0(z,y)|\le \frac{c_1}{\bar{r_0}^{\alpha}} |z-y|^{2-n+\alpha}\ \ , \ \ \ \ \ \ \ \ \ \ 
\end{eqnarray}
\begin{eqnarray}\label{asin3}
|\nabla_z R(z,y)-\nabla_z R_0(z,y)|\le \frac{c_2}{\bar{r_0}^{{\alpha}^2}} |z-y|^{1-n+{\alpha}^2}\ \ , 
\end{eqnarray}
for any $z\in \Sigma \cap B_r(x)$ and for any $y=h\nu(x)$ with $0<r<\bar{r_0}, \ 0<h<\bar{r_0}$ where $\bar{r_0}=c_3\min\{r_0, \rho \}$ and $\gamma_0$ in \eqref{R0} is such that $\gamma_0=\gamma^+(x)$.
\end{description}
\end{proposition}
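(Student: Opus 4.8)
The plan is to isolate in each Robin function the Newtonian singularity and to control the regular remainders by the regularity already available, carrying out all estimates at the natural scale $|z-y|$; the comparison with the half--space model will then be obtained by freezing the geometry of $\Sigma$ and the impedances at $x$ and estimating the resulting error problem. Concretely, for \eqref{asin1} I would first record the preliminary bound $|R(z,y)|\le C|z-y|^{2-n}$ — near $\Sigma$ by comparison with the crack--Neumann function (dominated by a reflected Newtonian potential), and away from $\Sigma$ by Lemma \ref{boundR} — and then, fixing $z$ and setting $d=|z-y|$, work on $B_{d/2}(z)$, on which $R(\cdot,y)$ is harmonic (the pole lies outside it) with $\sup_{\partial B_{d/2}(z)}|R(\cdot,y)|\le C d^{2-n}$: if $B_{d/2}(z)$ avoids $\Sigma$ the interior gradient estimate gives $|\nabla_z R(z,y)|\le C d^{-1}\sup_{B_{d/2}(z)}|R(\cdot,y)|\le C d^{1-n}$, while if $B_{d/2}(z)$ meets $\Sigma$ away from its edge the same bound follows after rescaling to the unit ball and using the boundary gradient estimates for the Robin problem, available through the $C^{1,\alpha}$ bound of Theorem \ref{rego} up to $\Sigma_{\rho}$.

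For \eqref{asin2}--\eqref{asin3} I would choose coordinates so that $x=0$, $\nu(x)=e_n$, $\Sigma\cap B_{r_0}=\{x_n=\varphi(x')\}$ with $\varphi(0)=|\nabla\varphi(0)|=0$ and $\|\varphi\|_{C^{1,\alpha}}\le M r_0$, and flatten $\Sigma$ near $x$ by the $C^{1,\alpha}$ change of variables $\hat{\Phi}$ of the proof of Theorem \ref{rego}, whose differential is the identity at $x$, hence close to the identity to order $\alpha$ on $B_r(x)$. The pulled--back function $v=R(\hat{\Phi}^{-1}(\cdot),y)$ solves $\mbox{div}(\hat{A}\nabla v)=-\delta(\cdot-\widetilde y)$ off $\{x_n=0\}$ with Robin coefficient $\hat{\gamma}^{\pm}$ there, where $\hat{A}(0)=I$, $|\hat{A}(w)-I|\le C|w|^{\alpha}$, $|\hat{\gamma}^{\pm}(w)-\gamma^{+}(x)|\le C|w|$ and $\widetilde y=\hat{\Phi}(h e_n)$ satisfies $|\widetilde y-h e_n|\le C h^{1+\alpha}$; the half--space Robin function $R_0(\cdot,\widetilde y)$ with $\gamma_0=\gamma^{+}(x)$ is precisely the constant--coefficient solution of this same model, with $|R_0(\cdot,\widetilde y)|\le C|\cdot-\widetilde y|^{2-n}$, $|\nabla R_0(\cdot,\widetilde y)|\le C|\cdot-\widetilde y|^{1-n}$ and $R_0$ smooth up to $\{x_n=0\}$ away from its pole. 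Then $D=v-R_0(\cdot,\widetilde y)$ carries no pole and solves
\begin{equation}\label{erroreq}
\mbox{div}(\hat{A}\nabla D)=\mbox{div}\, G\ \text{ off }\{x_n=0\}, \qquad \hat{A}\nabla D\cdot\hat{\nu}-\gamma_0 D=G\cdot\hat{\nu}+(\gamma_0-\hat{\gamma}^{\pm})R_0^{\pm}\ \text{ on }\{x_n=0\},
\end{equation}
with $G=(I-\hat{A})\nabla R_0(\cdot,\widetilde y)$; since $|w|\le C|w-\widetilde y|$ on the relevant flat piece of the crack, the source and the boundary datum in \eqref{erroreq} are bounded by $C|w-\widetilde y|^{1-n+\alpha}$, while $D$ differs from a function bounded in terms of the a priori data (carrying the influence of $\partial\widetilde{\Omega}$ and of the remote part of $\Sigma$) by a term governed solely by this geometric and impedance mismatch.

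To finish, I would represent $D$ through the Green--Robin kernel of the constant--coefficient model and estimate the resulting volume and surface potentials by $\int_{B}|z-w|^{1-n}|w-\widetilde y|^{1-n+\alpha}\,dw\le C|z-\widetilde y|^{2-n+\alpha}$ and its $(n-1)$--dimensional analogue (valid since $n\ge3$ gives $(n-1)+(n-1-\alpha)>n$), getting $|D(z)|\le C|z-\widetilde y|^{2-n+\alpha}$; the bounded additive term is absorbed because $|z-\widetilde y|\le 2\bar{r_0}$ forces $|z-\widetilde y|^{2-n+\alpha}\ge c\,\bar{r_0}^{2-n}$, and undoing the flattening yields \eqref{asin2}, the power $\bar{r_0}^{-\alpha}$ carrying the constant. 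For \eqref{asin3} I would rescale: given $z$, put $d=|z-\widetilde y|$, rescale $B_{d/2}(z)$ to the unit ball, normalize $D$ by \eqref{asin2}; the rescaled function solves an elliptic Robin problem with $C^{0,\alpha}$ coefficients $O(d^{\alpha})$--close to the constant ones, bounded on the unit ball, with divergence--form right--hand side and boundary datum of bounded $C^{0,\alpha}$ norm, so interior and boundary Schauder estimates control its $C^{1,\alpha}$ norm on a smaller ball and give $|\nabla_z D(z)|\le C d^{1-n+\alpha}$ in the flattened coordinates; transporting this bound back through the merely $C^{1,\alpha}$ diffeomorphism, together with the limited regularity of the error problem, costs a further loss in the H\"older exponent and is what produces the power $\alpha^{2}$ in \eqref{asin3}.

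I expect the main obstacle to be precisely this last bookkeeping: pushing the $C^{1,\alpha}$ flatness of $\Sigma$ and the Lipschitz regularity of $\gamma^{\pm}$ into \eqref{erroreq} while keeping exact track of the powers of $|z-y|$ — which is what forces the explicit exponents $2-n+\alpha$ and $1-n+\alpha^{2}$ — and checking that the genuinely global features of $R$, namely the outer boundary and the far part of the crack, contribute only bounded (hence, after the $\bar{r_0}^{-\alpha}$ normalization, lower order) terms.
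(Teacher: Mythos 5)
Your plan follows, in its core, the same route as the paper. For \eqref{asin2}--\eqref{asin3} you flatten $\Sigma$ near $x$ by a $C^{1,\alpha}$ change of variables, compare the transplanted Robin function with the half--space model $R_0$ with frozen impedance $\gamma_0=\gamma^+(x)$, write the difference as the solution of an error problem whose volume source is $(I-\hat A)\nabla R_0$ (the paper symmetrically uses $(I-\bar A)\nabla\bar R$ against the kernel $R_0$ in a representation formula over a large half ball, which is why it needs part i) inside the integrand), estimate the resulting volume and surface potentials by the same convolution bounds that produce the exponent $2-n+\alpha$, note that the outer--boundary and remote--crack contributions are bounded and absorbed because $|z-y|\le 2\bar{r_0}$, and attribute the $\alpha^2$ in \eqref{asin3} to transporting the estimate back through the merely $C^{1,\alpha}$ diffeomorphism --- exactly the mechanism behind \eqref{1b}. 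The only real divergence is in the ``PDE part'' of the gradient estimate: the paper interpolates between the sup bound on $\bar M$ and a crude $C^{0,\alpha}$ seminorm bound for $\nabla\bar M$ at scale $h$, while you invoke scaled boundary Schauder theory for the error problem; both are viable, yours gives a slightly better interior exponent at the price of a boundary Schauder estimate for a Robin problem with divergence--form right--hand side. A minor caveat: in your convolution estimate the natural weight is $|w|^{\alpha}$, not $|w-\widetilde y|^{\alpha}$; near the pole these are not comparable, and one needs the splitting into $|w|<4h$ and $|w|>4h$ as in the paper (it is routine, since $h\le C|z-\widetilde y|$ when $z$ lies on the crack).

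The one genuinely thin point is the pointwise bound underlying i), $|R(z,y)|\le C|z-y|^{2-n}$ near the crack, which you dispatch as ``comparison with the crack--Neumann function (dominated by a reflected Newtonian potential)''. That comparison is precisely where the paper invests its main technical device: a direct domination of the Robin function by a Neumann or reflected Newtonian kernel requires positivity and maximum--principle arguments that are delicate on a domain slit by a crack. The paper instead divides by the positive solution $v$ of Lemma \ref{soluzionepositiva} (bounded below by a positive constant), which converts the Robin condition into a homogeneous Neumann condition for the quotient $\widetilde N=\widetilde R/\widetilde v$; it then extends $\widetilde N$ evenly across the flattened crack, compares it with the Dirichlet Green's function of the reflected operator through the Littman--Stampacchia--Weinberger pointwise bound and the maximum principle, and only afterwards recovers \eqref{asin1} by the rescaled ADN--type gradient estimate at scale $h=|z-y|$, which is the same scaling step you propose. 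So your architecture for i) is right, but the asserted domination must be replaced by, or fleshed out into, an argument of this kind; as written it is the missing step of the proposal.
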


\begin{proof}
Without loss of generality we may assume that $x=0$. 
Let $\rho_0=\frac{1}{4}\min\{r_0, \mbox{dist}(x,\partial \Sigma) \}$ and let $\Phi \in C^{1,\alpha}(B_{\frac{\rho_0}{4M}}, \mathbb{R}^n)$ be the map introduced in Theorem \ref{intbyparts}. In particular we have that for any $0<r<{\frac{\rho_0}{4M}}$ it follows 
\begin{eqnarray}
\Omega_+\cap B_{\theta_2 r}(0)\subset \Phi(B_r^-(0))\subset \Omega_+\cap B_{\theta_1 r}(0)
\end{eqnarray}
where $\theta_1$ and $\theta_2$ are the constants mentioned in Theorem \ref{intbyparts}. 

We divide the proof in two steps. 

{\bf{i)}} In the first step we shall prove that there exists a constant $C_1>0$ depending on the a-priori data only such that 
\begin{eqnarray}\label{primastima}
|\nabla_z R(z,y)|\le C_1 |z-y|^{1-n}\ \ \ \mbox{for every}\ \ z,y \in \overline{\Omega_+\cap B_{\theta_2 {\frac{\rho_0}{8M}} }(0)} \ ,
\end{eqnarray} 
the other cases being trivial. 
Let then $z,y \in \overline{\Omega_+\cap B_{\theta_2 {\frac{\rho_0}{8M}} }(0)}$ and let $\zeta=\Phi^{-1}(z), \eta=\Phi^{-1}(y) \in \overline{B^-_{{\frac{\rho_0}{8M}}}(0)}$.

Denoting by 
\begin{eqnarray}
A(\zeta)=|\mbox{det}D\Phi(\zeta)|(D\Phi^{-1})(\Phi(\zeta))(D\Phi^{-1})^T(\Phi(\zeta))\ , 
\end{eqnarray}
\begin{eqnarray}
\widetilde{\gamma}^+(\zeta)=\gamma^+(\Phi(\zeta))\ , \ \widetilde{\gamma}^-(\zeta)=\gamma^-(\Phi(\zeta))
\end{eqnarray}
\begin{eqnarray}
\widetilde{R}(\zeta,\eta)=R(\Phi(\zeta),\Phi(\eta))
\end{eqnarray}
it follows that 
\begin{eqnarray}
C_1|\xi|^2\le A(\zeta)\xi\cdot\xi\le C_2|\xi|^2	 \ , \ \forall\ \zeta \in \overline{B^-_{{\frac{\rho_0}{8M}}}(0)}, \ \forall \xi \in \mathbb{R}^n \ ,
\end{eqnarray}
\begin{eqnarray}
|A(\zeta_1)- A(\zeta_2)|\le C_3 |\zeta_1-\zeta_2|^{\alpha}\ ,\ \forall \ \zeta_1, \zeta_2 \in \overline{B^-_{{\frac{\rho_0}{8M}}}(0)}
\end{eqnarray}
where $C_1,C_2, C_3>0$ are constants depending on the a-priori data only. Let us observe that $\widetilde{R}(\zeta,\eta)$ satisfies

\begin{equation}\
\left\{
\begin{array}
{lcl}
-{\mbox{div}}_{\zeta}(A(\zeta)\nabla_{\zeta}\widetilde{R}(\zeta,\eta))=\delta(\zeta-\eta)\ ,& \mbox{in $B^-_{{\frac{\rho_0}{8M}}}(0) $ ,}
\\
A(\zeta)\nabla_{\zeta}\widetilde{R}(\zeta,\eta)\cdot \nu' + \widetilde{\gamma}^+(\zeta)\widetilde{R}(\zeta,\eta)=0 , & \mbox{on $B'_{{\frac{\rho_0}{8M}}}(0) $ ,}
\end{array}
\right.
\end{equation}
where $\nu'=(0,\dots,0,1)$. 

Let $v\in H^1(\Omega\setminus\overline{\Sigma})$ be the solution to the problem \eqref{Positiva} and let $\widetilde{v}(\zeta)=v(\Phi(\zeta))$. Since by Lemma \ref{soluzionepositiva} we have that $\widetilde{v}(\zeta)\ge \bar{C}$ in $B^-_{\frac{\rho_0}{8M}}(0)$ then the quotient 
\begin{eqnarray}\label{quoziente}
\widetilde{N}(\zeta, \eta)= \frac{\widetilde{R}(\zeta, \eta)}{\widetilde{v}(\zeta)}
\end{eqnarray} 
is well defined there. 

Moreover, observing that $A(\zeta)=(a_{i,j}(\zeta))\ , i,j=1,\dots,n$ is a symmetric matrix, we have that straightforward calculations lead to 

\begin{equation}\
\left\{
\begin{array}
{lcl}
-{\mbox{div}}_{\zeta}(B(\zeta)\nabla_{\zeta}\widetilde{N}(\zeta,\eta))=\delta(\zeta-\eta)\ ,& \mbox{in $B^-_{{\frac{\rho_0}{8M}}}(0) $ ,}
\\
B(\zeta)\nabla_{\zeta}\widetilde{N}(\zeta,\eta)\cdot \nu' =0 , & \mbox{on $B'_{{\frac{\rho_0}{8M}}}(0) $ ,}
\end{array}
\right.
\end{equation}
where $B(\zeta)=(b_{i,j}(\zeta))= (\widetilde{v}^2(\zeta)a_{i,j}(\zeta))$.

Writing for any $\zeta, \eta \in B^-_{{\frac{\rho_0}{8M}}}(0)$

$$
\widetilde{N}_e(\zeta, \eta)=
\left\{
\begin{array}{rl}
&\widetilde{N}(\zeta, \eta) \ \ \mbox{if}\ \ \zeta=(\zeta',\zeta_n) \ \mbox{is such that} \ \zeta_n\le 0 \\
&\widetilde{N}(\zeta^*, \eta) \ \ \mbox{if}\ \ \zeta=(\zeta',\zeta_n)\  \mbox{is such that} \ \zeta_n> 0   ,
\end{array}
\right.
$$ 
and for $i=j$ with $i,j=1,\dots,n$
$$
{b}_{i,j}^e(\zeta)=
\left\{
\begin{array}{rl}
&{b_{i,j}}(\zeta) \ \ \mbox{if}\ \ \zeta=(\zeta',\zeta_n) \ \mbox{is such that} \ \zeta_n\le 0 \\
&{b_{i,j}}(\zeta^*) \ \ \mbox{if}\ \ \zeta=(\zeta',\zeta_n)\  \mbox{is such that} \ \zeta_n> 0   ,
\end{array}
\right.
$$ 
whereas for $i\neq j, i,j=1 \dots, n$ we set
$$
{b}_{i,j}^e(\zeta)=
\left\{
\begin{array}{rl}
&{b_{i,j}}(\zeta) \ \ \mbox{if}\ \ \zeta=(\zeta',\zeta_n) \ \mbox{is such that} \ \zeta_n\le 0 \\
&-{b_{i,j}}(\zeta^*) \ \ \mbox{if}\ \ \zeta=(\zeta',\zeta_n)\  \mbox{is such that} \ \zeta_n> 0   ,
\end{array}
\right.
$$ 
where $\zeta^*=(\zeta',-\zeta_n)$. The first two are even and the third one is odd with respect to $\{\zeta_n=0 \}$. In particular we have 
\begin{eqnarray}
-{\mbox{div}}_{\zeta}(B^e(\zeta)\nabla_{\zeta}\widetilde{N}_e(\zeta,\eta))=\delta(\zeta-\eta)+\delta(\zeta-\eta^*) \ \mbox{in} \ \ B_{\frac{\rho_0}{8M}}(0) 
\end{eqnarray}

where $B^e(\zeta)=(b_{i,j}^e(\zeta))$.

Write 
\begin{eqnarray}\label{decomp}
\widetilde{N}_e(\zeta,\eta)= \hat{N}_e(\zeta,\eta) + \hat{N}_e(\zeta,\eta^*)
\end{eqnarray}
where 
\begin{eqnarray} 
-{\mbox{div}}_{\zeta}(B^e(\zeta)\nabla_{\zeta}\hat{N}_e(\zeta,\eta))=\delta(\zeta-\eta) \ \ \mbox{in} \ \ B_{\frac{\rho_0}{8M}}(0) . 
\end{eqnarray}
Let $\eta\in  B_{\frac{\rho_0}{16M}}(0)$. By Lemma \ref{boundR} we have that there exists a constant $C>0$ depending on the a-priori data only such that 
\begin{eqnarray}\label{coronacirc}
\|\hat{N}_e(\cdot,\eta)\|_{L^{\infty}\left(B_{\frac{15\rho_0}{128}}(0)\setminus B_{\frac{13\rho_0}{128}}(0)\right)}\le C \ . 
\end{eqnarray}
We now consider the Green function $G(\zeta, \eta)$ such that 

\begin{equation}\
\left\{
\begin{array}
{lcl}
-{\mbox{div}}_{\zeta}(B^e(\zeta)\nabla_{\zeta}G(\zeta,\eta))=\delta(\zeta-\eta)\ ,& \mbox{in $B_{{\frac{7\rho_0}{8M}}}(0) $ ,}
\\
G(\zeta,\eta)=0 , & \mbox{on $\partial B_{{\frac{7\rho_0}{8M}}}(0) $ ,}
\end{array}
\right.
\end{equation}
with $\eta\in  B_{\frac{\rho_0}{16M}}(0)$. By the pointwise bound of $G$ with the fundamental solution for the Laplace equation (see \cite{LSW}) we infer that 
\begin{eqnarray}
|G(\zeta, \eta)|\le C |\zeta - \eta|^{2-n} \ \ \ \ \forall \ \zeta \in B_{{\frac{7\rho_0}{8M}}}(0)\ ,\ \ \forall \ \eta \in B_{\frac{\rho_0}{16M}}(0) \ 
\end{eqnarray}
where $C>0$ is a constant depending on the a-priori data only. 

Let us define $w(\zeta, \eta)=\hat{N}_e(\zeta,\eta)- G(\zeta,\eta)$, then we have 

\begin{equation}\
\left\{
\begin{array}
{lcl}
{\mbox{div}}_{\zeta}(B^e(\zeta)\nabla_{\zeta}w(\zeta,\eta))=0\ ,& \mbox{in $B_{{\frac{7\rho_0}{8M}}}(0) $ ,}
\\
w(\zeta,\eta)=\hat{N}_e(\zeta,\eta) , & \mbox{on $\partial B_{{\frac{7\rho_0}{8M}}}(0) $ .}
\end{array}
\right.
\end{equation}
Then by the bound in \eqref{coronacirc} and the maximum principle for solutions to equations in divergence form we have that 
\begin{eqnarray}
\|w(\cdot,\eta)\|_{L^{\infty}(B_{{\frac{7\rho_0}{8M}}}(0) )} \le C 
\end{eqnarray}
 where $C>0$ is a constant depending on the a-priori data only. 
 
Hence we may infer that there exists a constant $C>0$ depending on the a-priori data only such that 
\begin{eqnarray}
 |\hat{N}_e(\zeta, \eta)|\le C |\zeta - \eta|^{2-n} \ \ \ \ \forall \zeta, \forall \ \eta \in B_{\frac{\rho_0}{16M}}(0) \ ,\  \xi\neq \eta.
\end{eqnarray}
Moreover recalling \eqref{decomp} we have that 
\begin{eqnarray}\label{asinN}
 |\widetilde{N}_e(\zeta, \eta)|\le C |\zeta - \eta|^{2-n} \ \ \ \ \forall \zeta, \forall \ \eta \in B_{\frac{\rho_0}{16M}}(0) \ ,\  \xi\neq \eta \ ,
\end{eqnarray}
where $C>0$ is a constant depending on the a-priori data only. 

We observe that by Theorem \ref{rego} we have that the function $\widetilde{v}(\zeta)\ge C$ in $B^-_{\frac{\rho_0}{8M}}(0)$ where $C>0$ is a constant depending on the a-priori data only. Thus, by \eqref{quoziente} and by \eqref{asinN} we get that 

\begin{eqnarray}\label{asinR}
|\widetilde{R}(\zeta, \eta)|\le C |\zeta - \eta|^{2-n}  \ \ \forall \zeta, \forall \ \eta \in \overline{B^-_{\frac{\rho_0}{16M}}(0)} \ , \ \xi\neq \eta \ ,
\end{eqnarray}
where $C>0$ is a constant depending on the a-priori data only.

Let $h=\mbox{dist}(0,\eta)=|\eta|$, then we have 

\begin{equation}\
\left\{
\begin{array}
{lcl}
{\mbox{div}}_{\zeta}(A(\zeta)\nabla_{\zeta}\widetilde{R}(\zeta,\eta))=0\ ,& \mbox{in $B^-_{{\frac{h}{2}}}(0) $ ,}
\\
A(\zeta)\nabla_{\zeta}\widetilde{R}(\zeta,\eta)\cdot \nu' + \widetilde{\gamma}^+(\zeta)\widetilde{R}(\zeta,\eta)=0 , & \mbox{on $B'_{{\frac{h}{2}}}(0) $ ,}
\end{array}
\right.
\end{equation}
By well-known regularity bounds for the Neumann problem (see for instance \cite[p.667]{ADN}) we have that 
\begin{eqnarray}
\|\nabla_{\zeta} \widetilde{R}(\cdot,\eta)\|_{L^{\infty}({B^-_{\frac{h}{4}}(0)})}\le \frac{C}{h}\|\widetilde{R}(\cdot,\eta)\|_{L^{\infty}({B^-_{\frac{h}{2}}(0)})} \ ,
\end{eqnarray}
where $C>0$ is a constant depending on the a-priori data only.
By Theorem (regularity) we claim that there exists $\bar{\zeta}\in \overline{B^-_{\frac{h}{2}}(0)}$ such that 
\begin{eqnarray}
|\widetilde{R}(\bar{\zeta},\eta)|= \|\widetilde{R}(\cdot,\eta)\|_{L^{\infty}(\overline{B^-_{\frac{h}{2}}(0)})} .
\end{eqnarray}
Then by \eqref{asinR} we find that 
\begin{eqnarray}
\|\nabla_{\zeta} \widetilde{R}(\cdot,\eta)\|_{L^{\infty}(\overline{B^-_{\frac{h}{4}}(0)})}\le\frac{C}{h} |\bar{\zeta}-\eta|^{2-n} \ , 
\end{eqnarray}
where $C>0$ is a constant depending on the a-priori data only.

On the other hand, noticing that 
\begin{eqnarray}
|\bar{\zeta}-\eta|\ge |\eta| - |\bar{\zeta}|\ge h - \frac{h}{2}= \frac{h}{2}
\end{eqnarray}
we obtain that 

\begin{eqnarray}
\|\nabla_{\zeta} \widetilde{R}(\cdot,\eta)\|_{L^{\infty}(\overline{B^-_{\frac{h}{4}}(O)})}\le{C} |\eta|^{1-n} \ , 
\end{eqnarray}
where $C>0$ is a constant depending on the a-priori data only.

Coming back to the original coordinates we have 

\begin{eqnarray}
|\nabla_z R(z,y)|=|D\Phi^{-1}(z)^{T}\nabla_{\zeta}\widetilde{R}(\Phi^{-1}(z),\Phi^{-1}(y))|\le C_1 |z-y|^{1-n} \ , 
\end{eqnarray}
where $C_1>0$ is a constant depending on the a-priori data only.

{\bf{ii)}} In the second step we shall achieve the desired asymptotic estimates. 

Arguing as in \cite[Proposition 3.4]{ADC} we consider a function $\theta \in C^{\infty}(\mathbb{R})$ such that $0\le \theta \le 1, \ \theta(t)=1$, for $|t|<1$, $\theta(t)=0$, for $|t|>2$ and $\left|\frac{d\theta}{dt}\right|\le 2$. Let us fix $\rho_1=\min\{\frac{1}{4}(8M)^{-\frac{1}{\alpha}},\frac{1}{4} \}\cdot \frac{\theta_2\rho_0}{8M}$ and let us consider the following change of variables $z=\bar{\Phi}(\zeta)$ defined by

\begin{equation*}
\left\{
\begin{array}
{lcl}
\zeta'=z'
\\
\zeta_n= z_n - \varphi(z')\theta\left(\frac{|z'|}{\rho_1}\right)\theta\left(\frac{z_n}{\rho_1}\right)\ .
\end{array}
\right.
\end{equation*}
It can be verified that the map $\bar{\Phi}$ is a $C^{1, \alpha}(\mathbb{R}^n, \mathbb{R}^n)$ which satisfies the following properties 
\begin{eqnarray}\label{1}
\bar{\Phi}(Q^-_{\rho_1}(0))=\Omega_+ \cap Q_{\rho_1}(0)
\end{eqnarray} 
\begin{eqnarray}\label{2}
c^{-1}|z_1-z_2|\le |\bar{\Phi}^{-1}(z_1)-\bar{\Phi}^{-1}(z_2)|\le c |z_1-z_2| \ , \ \forall \ z_1,z_2 \in \mathbb{R}^n \ ,
\end{eqnarray}
\begin{eqnarray}\label{3}
|\bar{\Phi}^{-1}(z)- z|\le \frac{c}{{\rho_0}^{\alpha}} |z|^{1+\alpha} \ , \ \forall \ z \in \mathbb{R}^n \ ,
\end{eqnarray}
\begin{eqnarray}\label{4}
|D\bar{\Phi}^{-1}(z)- I|\le \frac{c}{{\rho_0}^{\alpha}} |z|^{\alpha} \ , \ \forall \ z \in \mathbb{R}^n \ ,
\end{eqnarray}
where $Q^-_{\rho_1}(0)= \{\zeta \in Q_{\rho_1}(O): \zeta_n<0 \}$ being $Q_{\rho_1}(0)$ the cube centered in $O$ with sides of length $2\rho_1$ and parallel to the coordinated axes and where $c>0$ is a constant depending on $M$ and $\alpha$ only.  

Let us define the half cylinder $C^-_{\rho_1}$ as 
\begin{eqnarray}
C_{\rho_1}= \{z \in \mathbb{R}^n : |z'|< \rho_1 , \ - \rho_1< z_n <0  \} \ .
\end{eqnarray}
For $z,y \in C^-_{\rho_1}$, we have that $\bar{R}(\zeta, \eta)= R(\bar{\Phi}(\zeta), \bar{\Phi}(\eta))$ is a solution to

\begin{equation}\
\left\{
\begin{array}
{lcl}
-{\mbox{div}}_{\zeta}(\bar{A}(\zeta)\nabla_{\zeta}\bar{R}(\zeta,\eta))=\delta(\zeta-\eta)\ ,& \mbox{in $ C^-_{\rho_1}$ ,}
\\
\bar{A}(\zeta)\nabla_{\zeta}\bar{R}(\zeta,\eta)\cdot \nu' + \overline{\gamma^+}(\zeta)\bar{R}(\zeta,\eta)=0 , & \mbox{on $B'_{\rho_1}(0)$ ,}
\end{array}
\right.
\end{equation}
where $z= \bar{\Phi}(\zeta), y= \bar{\Phi}(\eta), \overline{\gamma^+}(\zeta)= \gamma^+(\bar{\Phi}(\zeta))$ and where $$\bar{A}(\zeta)=|\mbox{det}D\bar{\Phi}(\zeta)|(D\bar{\Phi}^{-1})(\bar{\Phi}(\zeta))(D\bar{\Phi}^{-1})^T(\bar{\Phi}(\zeta)).$$ Moreover, we observe that $\bar{R}$ is of class $C^{\alpha}$ and $\bar{A}(0)=I$. Let $R_0(\zeta,\eta)$ be the fundamental solution introduced in \eqref{R0} with $\gamma_0=\overline{\gamma^+}(0)$. We notice that there exists a constant $C>0$ depending on the a-priori data only such that $|\overline{\gamma^+}(\zeta')-\gamma_0|\le C |\zeta'|^{\alpha}$ for any $ \zeta'\in B'_{\rho_1}(0)$.

Let us consider 
\begin{eqnarray}
\bar{M}(\zeta,\eta)= \bar{R}(\zeta,\eta)- R_0(\zeta,\eta)
\end{eqnarray}
which satisfies 

\begin{equation*}\
\left\{
\begin{array}
{lcl}
\Delta_{\zeta}\bar{M}(\zeta,\eta))={\mbox{div}}_{\zeta}((I-\bar{A})(\zeta)\nabla_{\zeta}\bar{R}(\zeta,\eta))\ ,&& \mbox{in $ C^-_{\rho_1}$ ,}
\\
\nabla_{\zeta}\bar{M}(\zeta,\eta)\cdot \nu' + {\gamma_0}(\zeta)\bar{M}(\zeta,\eta)=\\
=(I-\bar{A})\nabla\bar{R}(\zeta,\eta)\cdot \nu' + (\gamma_0-\overline{\gamma^+}(\zeta))\bar{R}(\zeta,\eta)\ , && \mbox{on $B'_{\rho_1}(0)$ .}
\end{array}
\right.
\end{equation*}
Let $L>0$ be such that $\overline{\Omega}\subset B_L(0)$. Thus by the representation formula over $B_L^-(0)$ we get 
\begin{eqnarray*}
\bar{M}(\zeta, \eta)&=&\int_{C^-_{\rho_1}}(I-\bar{A})(\xi)\nabla_{\xi}\bar{R}(\xi,\zeta)\nabla_{\xi}{R_0}(\xi,\eta)d\xi +\\ &+&\int_{B'_{\rho_1}(0)}(\gamma_0-\overline{\gamma^+}(\xi'))\bar{R}((\xi',0),\zeta)R_0((\xi',0),\eta)d\xi'+  \\
 &+&\int_{B_L^-(0)\setminus C^-_{\rho_1} }(I-\bar{A})(\xi)\nabla_{\xi}\bar{R}(\xi,\zeta)\nabla_{\xi}{R_0}(\xi,\eta)d\xi + \\
 &+& \int_{\partial(B_L^-(0))\setminus B'_{\rho_1}(0)} (\bar{A}-I)(\xi)\nabla_{\xi}\bar{R}(\xi,\zeta)\cdot\nu{R_0}(\xi,\eta) +\\
 &+& \int_{\partial(B_L^-(0))\setminus B'_{\rho_1}(0)} \partial_{\nu}\bar{M}(\xi,\zeta)R_0(\xi,\eta) - \partial_{\nu}R_0(\xi,\eta)\bar{M}(\zeta,\xi)d \sigma(\xi)\ . 
\end{eqnarray*}
For $|\zeta|,|\eta|\le \frac{\rho_1}{2}$ the last two integrals are bounded. Moreover, by \eqref{primastima} we have that
\begin{eqnarray*}
|\bar{M}(\zeta,\eta)|\le&& C \left (1 + \int_{C^-_{\rho_1}} |\xi|^{\alpha}|\xi-\zeta|^{1-n}|\xi-\eta|^{1-n}d\xi \right)+\\
&+&C\left(\int_{B'_{\rho_1}(0)} |\xi'|^{\alpha}|(\xi',0)-\zeta|^{2-n}|(\xi',0)-\eta|^{2-n}d\xi' \right) =\\
&=& C (1+ I_1 + I_2 + I_3 + I_4) \ ,
\end{eqnarray*}
where $C$ depends on the a-priori data only and 
\begin{eqnarray*}
&&I_1 = \int_{C^-_{\rho_1}\cap \{|\xi|<4h \}} |\xi|^{\alpha}|\xi-\zeta|^{1-n}|\xi-\eta|^{1-n}d\xi \ , \\
&&I_2 = \int_{C^-_{\rho_1}\cap \{|\xi|>4h \}} |\xi|^{\alpha}|\xi-\zeta|^{1-n}|\xi-\eta|^{1-n}d\xi \ , \\
&&I_3=\int_{B'_{\rho_1}(0)\cap \{|\xi'|<4h \}}|\xi'|^{\alpha}|(\xi',0)-\zeta|^{2-n}|(\xi',0)-\eta|^{2-n}d\xi' \ , \\
&&I_4=\int_{B'_{\rho_1}(0)\cap \{|\xi'|>4h \}}|\xi'|^{\alpha}|(\xi',0)-\zeta|^{2-n}|(\xi',0)-\eta|^{2-n}d\xi' \ , \\
\end{eqnarray*}
with $h=|\zeta-\eta|$.

We bound $I_1$ as follows 
\begin{eqnarray}
I_1&\le& h^{\alpha +2 -n}\int_{|\hat{\xi}|<4}|\hat{\xi}|^{\alpha}|\hat{\zeta}-\hat{\xi}|^{1-n}|\hat{\eta}-\hat{\xi}|^{1-n}d\hat{\xi} \le \nonumber\\
&\le& 4^{\alpha}h^{\alpha +2 -n}\int_{|\hat{\xi}|<4}|\hat{\zeta}-\hat{\xi}|^{1-n}|\hat{\eta}-\hat{\xi}|^{1-n}d\hat{\xi} \ , 
\end{eqnarray}
where $\hat{\xi}= \frac{\xi}{h}, \hat{\zeta}= \frac{\zeta}{h}, \hat{\eta}= \frac{\eta}{h}$. 
From standard bounds (see for instance \cite[Chapter 2]{Mi}) it follows that 
\begin{eqnarray*}
\int_{|\hat{\xi}|<4}|\hat{\zeta}-\hat{\xi}|^{1-n}|\hat{\eta}-\hat{\xi}|^{1-n}d\hat{\xi}<\infty
\end{eqnarray*}
for any $\hat{\zeta}, \hat{\eta}\in \mathbb{R}^n, \ |\hat{\zeta}-\hat{\eta}|=1$. Thus we found that 
\begin{eqnarray*}
I_1 \le c |\zeta-\eta|^{\alpha +2 -n} \ .
\end{eqnarray*}
Let us now consider $I_2$. We recall that by our hypothesis we have that $|\eta|=-\eta_n$. Let $\zeta=(\zeta',\zeta_n)$ be such that $|\zeta_n|<\frac{1}{4}|\eta_n|$. Then we have that $h=|\zeta-\eta|\ge \frac{1}{2}|\eta|$ from which we deduce that $|\xi|\le 2|\xi-\eta|$ and $|\xi|\le 4|\xi-\zeta|$. Hence we obtain that 
\begin{eqnarray*}
I_2\le c \int_{\{|\xi|>4h \}} |\xi|^{\alpha+2-2n}d\xi\le c h^{\alpha+2-n} \ .
\end{eqnarray*}
Treating analogously the integrals $I_3$ and $I_4$ we find that 
\begin{eqnarray}\label{Stimaasint}
|\bar{M}(\zeta,\eta)|\le C |\zeta-\eta|^{\alpha +2 -n}\ , 
\end{eqnarray}
for any $\eta=(0,\cdots,0, \eta_n)$ such that $0<-\eta_n< \frac{\rho_1}{2}$ and for any $|\zeta|\le \frac{\rho_1}{2} $ such that $|\zeta_n|<\frac{1}{4}|\eta_n|$ and where $C>0$ is a constant depending on the a-priori data only.  

Furthermore by Theorem \ref{rego} we have $\bar{M}(\cdot,\eta)\in C^{\alpha}\left({\overline{C^-_{\frac{\rho_1}{2},\frac{1}{4}\eta_n}}}\right)$ 
where $C^-_{\frac{\rho_1}{2},\frac{1}{4}\eta_n}=\{\zeta\in C^-_{\rho_1}:|\zeta|\le \frac{\rho_1}{2}, |\zeta_n|\le \frac{1}{4}\eta_n \} $. Hence we can deduce that the above estimate remains valid for points $|\zeta|\le \frac{\rho_1}{2}$ such that $\zeta =(\zeta',0)$.

We now go back to the original coordinates system. 

Let $z\in \bar{\Phi}(B'_{\frac{\rho_1}{2}}(0))$ and let $y=(0,y_n)$ with $y_n\in (-\frac{\rho_1}{2},0)$, then since $\bar{\Phi}^{-1}(y)=y$ and since $|\bar{\Phi}^{-1}(y)|=|\bar{\Phi}^{-1}(y)-\bar{\Phi}^{-1}(0)|\le |\bar{\Phi}^{-1}(y)-\bar{\Phi}^{-1}(z)|$ we get by \eqref{2} that 
\begin{eqnarray}\label{5}
c^{-1}|z|\le |\bar{\Phi}^{-1}(z)|\le |\bar{\Phi}^{-1}(y)-\bar{\Phi}^{-1}(z)|+|\bar{\Phi}^{-1}(y)|\le c|y-z|.
\end{eqnarray}
On the other hand by \eqref{3} and by \eqref{5} we have that 
\begin{eqnarray}\label{6}
|\bar{\Phi}^{-1}(z)-z|\le \frac{c}{\rho_0^{\alpha}}|z|^{1+\alpha}\le \frac{c'}{\rho_0^{\alpha}} |z-y|^{1+\alpha} \ .
\end{eqnarray}
We have that 
\begin{eqnarray*}
M(z,y)=R(z,y)- R_0(z,y)= \bar{M}(\bar{\Phi}^{-1}(z),\bar{\Phi}^{-1}(y))+ R_0(\bar{\Phi}^{-1}(z),y) - R_0(z,y) \ .
\end{eqnarray*}
Then using \eqref{2}, \eqref{3}, \eqref{Stimaasint} and \eqref{6} we find that 
\begin{eqnarray}\label{stimafunz}
|M(z,y)|&\le& C|z-y|^{\alpha+2-n} +\frac{C}{\rho_0^{\alpha}}\|\nabla R_0(\cdot,y) \|_{L^{\infty}(B'_{\rho_1}(0))}|z-\bar{\Phi}^{-1}(z)|\le\nonumber \\
&\le& C|z-y|^{\alpha+2-n} +\frac{C'}{\rho_0^{\alpha}}|z-y|^{1-n}|z-y|^{1+\alpha}\le\nonumber \\&\le& \frac{C''}{\rho_0^{\alpha}}|z-y|^{2+\alpha-n} \ ,
\end{eqnarray}
 where $C''>0$ depends on the a-priori data only.

We now estimate the gradient of $M$. Let $z\in \bar{\Phi}(B'_{\frac{\rho_1}{4}}(0))$ such that $z=\bar{\Phi}(\zeta)$ and let $h=|\zeta-y|$. The following interpolation inequality holds 
\begin{eqnarray}\label{interpol}
\|\nabla_{\zeta} \bar{M}(\cdot,y)\|_{L^{\infty}(B'_{\frac{\rho_1}{4}}(0))}\le C \|\bar{M}(\cdot,y) \|_{L^{\infty}(B'_{\frac{\rho_1}{4}}(0))} ^{\frac{\alpha}{1-\alpha}}|\nabla_{\zeta} \bar{M}(\cdot,y)|_{\alpha, B'_{\frac{\rho_1}{4}}(0)}^{\frac{1}{1-\alpha}} \ \ ,
\end{eqnarray}
where $C>0$ depends on the a-priori data only and 

\begin{eqnarray*}
|\nabla_{\zeta} \bar{M}(\cdot,y)|_{\alpha, B'_{\frac{\rho_1}{4}}(0)}
=\sup_{\substack {\zeta,\zeta'  \in B'_{\frac{\rho_1}{4}(O)} \\ \zeta\neq \zeta' }}\frac{|\nabla_{\zeta} \bar{M}(\zeta,y)-\nabla_{\zeta} \bar{M}(\zeta',y)|}{|\zeta-\zeta'|^{\alpha}} \ .
\end{eqnarray*}
By the H\"{o}lder continuity of $\nabla_{\zeta} \bar{R}$ and also of $\nabla_{\zeta}R_0$  we have that

\begin{eqnarray}\label{in}
|\nabla_{\zeta} \bar{M}(\cdot,y)|_{\alpha, B'_{\frac{h}{4}}(0)}\le&& \frac{C}{{h}^{\alpha}}\left(\|\nabla_{\zeta} \bar{R}(\cdot,y) \|_{L^{\infty}(B'_{\frac{h}{2}}(0))}+\|\nabla_{\zeta} {R_0}(\cdot,y) \|_{L^{\infty}(B'_{\frac{h}{2}}(0))}\right)\ \nonumber \\
\le&& C h^{1-n-\alpha}  
\end{eqnarray}
where $C>0$ depends on the a-priori data only. 

Hence combining \eqref{stimafunz},\eqref{interpol} and \eqref{in} we get 
\begin{eqnarray}\label{1a}
|\nabla_z \bar{M}({\bar{\Phi}}^{-1}(z),y)|\le&& \frac{C}{\rho_0^{\frac{\alpha^2}{\alpha+1}}}|z-y|^{(2+\alpha-n)(\frac{\alpha}{1-\alpha})}|z-y|^{(1-\alpha-n)(\frac{1}{1-\alpha})}=\nonumber\\
=&&\frac{C}{\rho_0^{\frac{\alpha^2}{\alpha+1}}}|z-y|^{1-n +\frac{\alpha^2}{\alpha+1}}
\end{eqnarray}
On the other hand we have 
\begin{eqnarray}\label{1b}
&&|\nabla_z R_0({\bar{\Phi}}^{-1}(z),y) - \nabla_z R_0(z,y)|\le\nonumber\\ 
&&\le|(D{\bar{\Phi}}^{-1}(z)^T-I)\nabla R_0(\cdot,y)|_{{\bar{\Phi}}^{-1}(z)} | + |\nabla R_0(\cdot,y)|_{{\bar{\Phi}}^{-1}(z)}- \nabla_z R_0(z,y)|\le\nonumber\\
&&\le\frac{C}{{\rho_0}^{\alpha}}\|\nabla R_0(\cdot,y) \|_{L^{\infty}(B'_{\rho_1}(0))}|z-{\bar{\Phi}}^{-1}(z)|+ |\nabla R_0(\cdot,y)|_{\alpha,B'_{\rho_1}(0) }|{\bar{\Phi}}^{-1}(z)-z |^{\alpha}\le\nonumber \\
&&\le\frac{C}{{\rho_0}^{{\alpha}^2}}|z-y|^{1-n}|z-y|^{1+\alpha} + \frac{C}{{\rho_0}^{{\alpha}^2}}|z-y|^{-\alpha+1-n}|z-y|^{(1+\alpha)\alpha}\le \nonumber\\
&&\le\frac{C}{{\rho_0}^{{\alpha}^2}}|z-y|^{1-n + \alpha^2}\ ,
\end{eqnarray}
where $C>0$ depends on the a-priori data only. Thus by \eqref{1a} and \eqref{1b} we obtain the thesis. \end{proof}

\begin{proposition}\label{holder}
Let $\Sigma$ be a crack satisfying the a-priori assumption stated above. Let $x\in \Omega\setminus\partial\Sigma$ and let $y\in \Omega\setminus \Sigma$ such that $|x-y|\le 2r_0$. Then there exist constants $C,\widetilde{\alpha}>0$ depending on the a-priori data only such that 
\begin{eqnarray}
|\nabla_x R(x,y)|\le C \left(  \frac{r^{\widetilde{\alpha}-1}}{|x-y|^{n-2+\widetilde{\alpha}}} \right) \ ,
\end{eqnarray}
where $r=\frac{1}{4}\min\{\mbox{dist}(x,\partial \Sigma), |x-y|\}$. 
\end{proposition}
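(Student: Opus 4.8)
The scheme is to interpolate between two regimes according to the position of $x$ relative to the crack edge $\partial\Sigma$; throughout, recall that $R$ lives on $\widetilde\Omega\supset\supset\Omega$, so $x$ stays at a fixed positive distance from $\partial\widetilde\Omega$ and only $\Sigma$ and $\partial\Sigma$ are relevant. Write $d=\mathrm{dist}(x,\partial\Sigma)$, so that $r=\frac14\min\{d,|x-y|\}$. If $d\ge\varepsilon_0|x-y|$ for a small fixed $\varepsilon_0$ depending only on the a priori data, then $r$ is comparable to $|x-y|$ and the asserted bound follows at once from the gradient estimate $|\nabla_z R(z,y)|\le c_0|z-y|^{1-n}$ of Proposition \ref{stimeasintotiche}(i). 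Hence we may assume $d<\varepsilon_0|x-y|$; choose $x_0\in\partial\Sigma$ with $|x-x_0|=d$, so that $|x_0-y|$ is comparable to $|x-y|$ and $r=d/4$. The goal is then to prove $|\nabla_xR(x,y)|\le C\,d^{\widetilde\alpha-1}|x-y|^{2-n-\widetilde\alpha}$.

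Two ingredients are needed. The first is the global pointwise bound $|R(z,y)|\le C|z-y|^{2-n}$ (for $|z-y|\le Cr_0$, with $C$ depending only on the a priori data); this is contained in the Green function comparison already carried out in the proof of Proposition \ref{stimeasintotiche}(i), and is in any case the classical estimate of \cite{LSW} for Green/Robin functions of uniformly elliptic problems. The second is a quantitative Hölder estimate up to the crack edge for the homogeneous Robin problem: there are $\widetilde\alpha\in(0,1)$ and $C>0$ depending only on the a priori data, and a radius $\rho_0$ equal to a fixed small fraction of $\min\{r_0,|x_0-y|\}$ (chosen so that $y\notin B_{\rho_0}(x_0)$ and, inside $B_{\rho_0}(x_0)$, the surface $\Gamma$ can be flattened and $\Sigma$ turned into a half-hyperplane), such that
$$\sup_{z\in B_\rho(x_0)\setminus\overline\Sigma}|R(z,y)-R(x_0,y)|\le C\Big(\frac{\rho}{\rho_0}\Big)^{\widetilde\alpha}\sup_{B_{\rho_0}(x_0)\setminus\overline\Sigma}|R(\cdot,y)|\le C\Big(\frac{\rho}{\rho_0}\Big)^{\widetilde\alpha}|x-y|^{2-n},\qquad 0<\rho\le\rho_0,$$
the last inequality using the global bound together with the fact that $|z-y|$ is comparable to $|x-y|$ for $z\in B_{\rho_0}(x_0)$.

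The proof of this edge estimate follows the pattern of the proof of Theorem \ref{rego}: pull $R(\cdot,y)$ back through the change of variables $\hat\Phi$ flattening $\Gamma$ and turning $\Sigma$ into $\{y_n=0,\ y_{n-1}<0\}$, and then through the ``square root'' map $\hat\Psi$ sending the slit ball onto a half-ball, so that the transformed function solves a uniformly elliptic equation with bounded Robin coefficient on the flat boundary, the edge having been flattened to an $(n-2)$-plane inside it. Dividing by the pull-back of the positive solution $v$ of Lemma \ref{soluzionepositiva} (bounded above and below by constants from the a priori data) converts the Robin condition into a homogeneous conormal one, for which De Giorgi-Nash oscillation decay at a flat boundary point applies (cf. \cite{Eva,gt}); transferring back -- and using that $v$ itself is Hölder up to $\partial\Sigma$ with norm controlled by the a priori data, again by the proof of Theorem \ref{rego} -- and absorbing the degeneracy of $\hat\Psi$ into the exponent, one obtains the displayed bound with a (possibly smaller) positive $\widetilde\alpha$. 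I expect this step to be the main obstacle: one must propagate the De Giorgi oscillation estimate through the two (bi-Lipschitz, in part degenerate) coordinate changes and through the Robin-to-conormal reduction while keeping every constant dependent only on the a priori data.

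To conclude, combine the two ingredients with an interior gradient estimate away from the edge. Since $d<\varepsilon_0|x-y|$, the ball $B_{d/4}(x)$ meets neither $y$ nor $\partial\Sigma$, so there $R(\cdot,y)$ solves the homogeneous crack problem with $\Sigma$ a $C^{1,\alpha}$ hypersurface carrying uniform constants at scale $d$; rescaling to the unit ball and using Schauder estimates for the Robin problem -- treating the lower order term $\gamma R$ as a bounded conormal datum and subtracting a constant in the homogeneous conormal part -- one gets
$$|\nabla_xR(x,y)|\le \frac{C}{d}\,\mathrm{osc}_{B_{d/4}(x)}R(\cdot,y)\;+\;C\,\|R(\cdot,y)\|_{L^\infty(B_{d/4}(x))}.$$
By the global bound the second term is at most $C|x-y|^{2-n}$, and since $d<|x-y|\le 2r_0$ one has $d^{1-\widetilde\alpha}|x-y|^{\widetilde\alpha}\le|x-y|\le 2r_0$, so that this term is $\le C\,d^{\widetilde\alpha-1}|x-y|^{2-n-\widetilde\alpha}$. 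For the first term, $B_{d/4}(x)\subset B_{2d}(x_0)$, whence $\mathrm{osc}_{B_{d/4}(x)}R(\cdot,y)\le 2\sup_{B_{2d}(x_0)}|R(\cdot,y)-R(x_0,y)|\le C(2d/\rho_0)^{\widetilde\alpha}|x-y|^{2-n}$ by the edge estimate, and since $\rho_0$ is comparable to $|x-y|$ this gives $\frac{C}{d}\,\mathrm{osc}_{B_{d/4}(x)}R(\cdot,y)\le C\,d^{\widetilde\alpha-1}|x-y|^{2-n-\widetilde\alpha}$. Recalling $r=d/4$ and writing $|x-y|^{2-n-\widetilde\alpha}=|x-y|^{-(n-2+\widetilde\alpha)}$ yields the claim.
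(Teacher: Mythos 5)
Your proposal is correct in outline and ends with the same assembly as the paper -- a H\"older modulus of continuity for the singular solution near the crack, a rescaled local gradient bound at the scale $r$, and absorption of the lower-order term via $d^{1-\widetilde\alpha}|x-y|^{\widetilde\alpha}\le |x-y|\le 2r_0$ -- but it reaches the key H\"older estimate by a genuinely different route. The paper divides $R(\cdot,y)$ by the positive solution $v$ of Lemma \ref{soluzionepositiva}, observes that $N=R/v$ is the Neumann function of $\mathrm{div}(v^2\nabla\cdot)$ with homogeneous conormal condition on $\Sigma$, and then imports the kernel estimate $|N(x,y)-N(x',y)|\le C\,|x-x'|^{\widetilde\alpha}\big(|x-y|^{n-2+\widetilde\alpha}+|x'-y|^{n-2+\widetilde\alpha}\big)^{-1}$ from Corollary 2.14 of Kenig--Pipher \cite{KePi} (after a change of variables turning the slit geometry into a Lipschitz one); the local gradient bound \eqref{locbound} is then applied to $N$, and one returns to $R$ by the product rule $\nabla R=v\nabla N+N\nabla v$. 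You instead prove the needed oscillation decay at an edge point directly, via the flattening map $\hat\Phi$, the angle-halving map $\hat\Psi$ and the division by $v$ exactly as in the proof of Theorem \ref{rego}, invoking De Giorgi--Nash decay at a flat conormal boundary, and you recover the $|x-y|^{2-n}$ scaling (which the Kenig--Pipher kernel bound supplies for free) from the global pointwise bound $|R(z,y)|\le C|z-y|^{2-n}$ at scale $\rho_0\sim|x-y|$. Your route is more self-contained (no Lipschitz-domain Neumann-function theory), at the price of redoing a quantitative De Giorgi argument through the two coordinate changes and of using the global $|z-y|^{2-n}$ bound uniformly up to the edge; the paper relies on essentially the same bound (cf.\ \eqref{disegtr}, \eqref{asin1} and Lemma \ref{boundR}), so this is on a par with its own level of detail. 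Two small cautions: your first regime leans on the literal statement of Proposition \ref{stimeasintotiche}(i) being uniform up to $\partial\Sigma$, whereas it is safer to note that when $r\sim|x-y|$ your ingredient (c) plus the global bound already give $|\nabla_x R(x,y)|\le C|x-y|^{1-n}$, which suffices since $r\le|x-y|/4$ makes the right-hand side of the Proposition at least $C|x-y|^{1-n}$; and when transferring the oscillation decay from $N$ back to $R=Nv$ you pick up the extra term $\|N\|_{\infty}\,\mathrm{osc}(v)\le C|x-y|^{2-n}d^{\alpha}$, which is absorbed by the same $|x-y|\le 2r_0$ trick after taking $\widetilde\alpha\le\alpha$ -- worth stating explicitly.
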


\begin{proof} Let $v\in H^1(\Omega\setminus\overline{\Sigma})$ be the positive solution to \eqref{Positiva} introduced in Lemma \ref{soluzionepositiva}. We observe that $\displaystyle{N(x,y)=\frac{R(x,y)}{v(x)}}$ solves
\begin{equation*}\label{N}
\left\{
\begin{array}
{lcl}
\mbox{div}_x(v^2(x) {\nabla}_x N(x,y))=- \delta(x-y)\ ,&& \mbox{in $\Omega\setminus\overline{\Sigma}$ ,}
\\
\partial_{\nu}N(x,y)=0 \ , && \mbox{on $\Sigma$ .}
\end{array}
\right.
\end{equation*}

Let now $x$ be a point in $\Omega\setminus\partial\Sigma$ and let $y\in \Omega\setminus \Sigma$.

Let $r$ be the radius defined as follows $r=\frac{1}{4}\min\{\mbox{dist}(x,\partial \Sigma), |x-y|\}$. Without loss of generality we may assume that $x\in \overline{\Omega_+}$. Let $x'\in B_{r}(x)\cap \overline{\Omega_+}$ such that $|x-x'|\le\frac{r}{2}$.

By a change of variable argument we can deduce from Corollary 2.14 in  \cite{KePi} the following H\"{o}lder continuity property of the Neumann function 
\begin{eqnarray}\label{hol}
{{|N(x,y)-N(x',y)|\le C \frac{|x-x'|^{\widetilde{\alpha}}}{|x-y|^{n-2+\widetilde{\alpha}}+ |x'-y|^{n-2+\widetilde{\alpha}}} }} ,
\end{eqnarray}
where $C,\widetilde{\alpha}>0$ depends on the a-priori data only.

Moreover, being $|x'-y|\ge \frac{7}{8}|x-y|$ we can deduce that
\begin{eqnarray}\label{hol2}
|N(x,y)-N(x',y)|\le C \frac{|x-x'|^{\widetilde{\alpha}}}{|x-y|^{n-2+\widetilde{\alpha}}} \ ,
\end{eqnarray}
 up to a possible replacing of the constant $C$ in \eqref{hol}.
 
 Let us now consider $\bar{x}\in \partial B_r(x)\cap \overline{\Omega_+}$ and let $x''\in B_{\frac{r}{2}}(x)\cap\overline{\Omega_+}$, then by the above estimate and by observing that 
 $|\bar{x}-y|\ge \frac{3}{4}|x-y|$, we have that there exists a constant $C>0$ depending on the a-priori data only such that 
 \begin{eqnarray}\label{disegtr}
 |N(x'',y)|&&\le |N(x'',y)- N(\bar{x},y)| + |N(\bar{x},y)|\le \nonumber \\
 &&\le C \left(\frac{|x''-\bar{x}|^{\widetilde{\alpha}}}{|x-y|^{n-2+\widetilde{\alpha}}} + |{x}-y|^{2-n} \right) \ .
 \end{eqnarray}
 Moreover, by the following local bound for the gradient, we have that there exists a constant $C>0$ depending on the a priori data only such that 
 \begin{eqnarray}\label{locbound}
 \|\nabla_x N(\cdot,y)\|_{L^{\infty}(B_{\frac{r}{4}}(x) \cap \overline{\Omega_+} )}\le \frac{C}{r} \|N(\cdot,y)\|_{L^{\infty}(B_{\frac{r}{2}}(x)\cap\overline{\Omega_+})}\
 \end{eqnarray}
 By combining \eqref{disegtr} and \eqref{locbound} we find
 \begin{eqnarray}
 \|\nabla_x N(\cdot,y)\|_{L^{\infty}(B_{\frac{r}{4}}(x) \cap \overline{\Omega_+} )} \le C\left( \frac{r^{\widetilde{\alpha}-1}}{|x-y|^{n-2+\widetilde{\alpha}}} + \frac{r^{-1}}{|x-y|^{n-2}}\right) \ .
 \end{eqnarray}
 Next, being $|x-y|\le 2r_0$ we can find a constant $C$ depending on the a-priori data only such that 
 
 \begin{eqnarray}
 \|\nabla_x N(\cdot,y)\|_{L^{\infty}(B_{\frac{r}{4}}(x) \cap \overline{\Omega_+} )} \le C\frac{r^{\widetilde{\alpha}-1}}{|x-y|^{n-2+\widetilde{\alpha}}}  \ .
 \end{eqnarray}
 
Finally by the formal computation 
\begin{eqnarray}
\nabla_x R(x,y)= \nabla_x N(x,y)v(x) + \nabla v(x) N(x,y)
\end{eqnarray}
and by analogous arguments of those applied above, the thesis follows. \end{proof}

Let us consider $O\in \Sigma_1\cap \partial V_{l_1}$ the point in \eqref{puntodistanza}.
We introduce a point $O'\in \Sigma_1$ which is defined as follows by distinguishing two cases. 
\begin{itemize}
\item If $O\in \Sigma_1$ is such that $\mbox{dist}(O,\partial\Sigma_1)<\frac{d_{l_1}}{4}$, then we consider a point $O'\in \Sigma_1$ so that $\mbox{dist}(O,O')=\frac{d_{l_1}}{2}$. It follows that $\mbox{dist}(O',\partial \Sigma_1)\ge \frac{d_{l_1}}{4}$ and $\mbox{dist}(O',\Sigma_2)\ge \frac{d_{l_1}}{2}$. 
\item If $O\in \Sigma_1$ is such that $\mbox{dist}(O,\partial\Sigma_1)\ge\frac{d_{l_1}}{4}$ then we set $O'=O$.
\end{itemize}

\begin{proof}[Proof of Proposition \ref{stimealtobasso}.]
We begin by recalling \eqref{eg}
and we write
\begin{eqnarray}\label{diff}
|f(y,y)|\ge |S_{\Sigma_1}(y,y)|-|S_{\Sigma_2}(y,y)| \ .
\end{eqnarray}

First, we consider the term $S_{\Sigma_1}(y,y)$.  We fix a radius $\rho=\min\{cd_{l_1},c_0d_{l_1}^p\} $ and we observe that 

\begin{eqnarray}\label{Ssigma}
|S_{\Sigma_1}(y,y)|&\ge&\left|\int_{(\Sigma_1\setminus\Sigma_2)\cap B_{\rho}(O')}R_1^+(\cdot,y)\partial_{\nu_1}R_2^+(\cdot,y)d\sigma\right| -\nonumber\\
& -&\int_{(\Sigma_1\setminus\Sigma_2)\cap B_{\rho}(O')}|R_1^-(\cdot,y)\partial_{\nu_1}R_2^-(\cdot,y)d\sigma|  -\nonumber\\
&-& \int_{(\Sigma_1\setminus\Sigma_2)\setminus B_{\rho}(O')}|[R_1(\cdot,y)]_1\partial_{\nu_1}R_2(\cdot,y)|  d\sigma - \nonumber \\
&-&\int_{(\Sigma_1\setminus\Sigma_2)}|R_2(\cdot,y)[\partial_{\nu_1}R_1(\cdot,y)]_1|  d\sigma - \nonumber\\
&-& \int_{\Sigma_1\cap\Sigma_2}|[R_2(\cdot,y)\partial_{\nu_1}R_1(\cdot,y)]|  d\sigma
\end{eqnarray}

Let $\Gamma(x,y)$ be the fundamental solution to the Laplace equation. Then we have that for any $x\in B_{\rho}(O')$ and for any $y\in B_{\frac{\rho}{2}}(O')$ 
\begin{equation*}\
\left\{
\begin{array}
{lcl}
\Delta_{x}(R_2(x,y)-\Gamma(x,y))=0\ ,&& \mbox{in $B_{\rho}(O')$ ,}
\\
|R_2(x,y)-\Gamma(x,y)||_{\partial B_{\rho}(0')}\le C \rho^{2-n} \, 
\end{array}
\right.
\end{equation*}
where $C$ is a constant depending on the a-priori data only. 

By the maximum principle for harmonic functions we get 
\begin{eqnarray}
|R_2(x,y)-\Gamma(x,y)|\le C \rho^{2-n}\ \ \forall \ x \in B_{\rho}(O'), \ \forall \ y \in B_{\frac{\rho}{2}}(O') \ .
\end{eqnarray}
By standard gradient estimates we have that 
\begin{eqnarray}
|\nabla_x R_2(x,y)-\nabla_x \Gamma(x,y)|\le C \rho^{1-n}  \  \forall \ x \in B_{\frac{\rho}{2}}(O'), \ \forall \ y \in B_{\frac{\rho}{2}}(O'). 
\end{eqnarray}
Moreover observing that 
\begin{eqnarray}
&&R_1(x,y)= R_0(x,y) + (R_1(x,y)- R_0(x,y)) \ \\
&&\nabla_x R_2(x,y)= \nabla_x \Gamma(x,y) + (\nabla_x R_2(x,y)-\nabla_x \Gamma(x,y))
\end{eqnarray}
we have that 
\begin{eqnarray}
&&\left|\int_{(\Sigma_1\setminus\Sigma_2)\cap B_{\frac{\rho}{2}}(0')}R_1^+(x,y)\partial_{\nu(x)} R^+_2(x,y)d\sigma(x)\right|\ge \nonumber \\
&& \ge\left|\int_{(\Sigma_1\setminus\Sigma_2)\cap B_{\frac{\rho}{2}}(O')}R_0(x,y)\partial_{\nu(x)}\Gamma(x,y)d\sigma(x)\right|-\nonumber\\ 
&&- \int_{(\Sigma_1\setminus\Sigma_2)\cap B_{\frac{\rho}{2}}(0')}|R_0(x,y)\partial_{\nu(x)}(R^+_2(x,y)-\Gamma(x,y))|d\sigma(x) -\nonumber\\
&&-\int_{(\Sigma_1\setminus\Sigma_2)\cap B_{\frac{\rho}{2}}(0')}|(R_1^+(x,y)-R_0(x,y))\partial_{\nu(x)}\Gamma(x,y)|d\sigma(x)- \nonumber\\
&&-\int_{(\Sigma_1\setminus\Sigma_2)\cap B_{\frac{\rho}{2}}(0')}|(R_1^+(x,y)-R_0(x,y))\partial_{\nu(x)}(R^+_2(x,y)-\Gamma(x,y))|d\sigma(x)\nonumber
\end{eqnarray}

Let $\Phi \in \ C^{1,\alpha}(B_{\frac{\rho_0}{4M}}(O'),\mathbb{R}^n)$ be the change of coordinates introduced in Theorem \ref{intbyparts}, then we have that 
$\nu(x)=(0,\dots,0,1)+ \mathcal{O}(|x'|^{\alpha})$ and hence
\begin{eqnarray*}
&&\left|\int_{(\Sigma_1\setminus\Sigma_2)\cap B_{\frac{\rho}{2}}(O')}R_0(x,y)\partial_{\nu(x)}\Gamma(x,y)d\sigma(x)\right|\ge \hat{c_1}\int_{B'_{\frac{\rho}{16M}}(\Phi^{-1}(0'))}h|\xi'-\eta|^{2-2n}d\xi'+ \\
&& - \hat{c_2} \int_{B'_{\frac{\rho}{16M}}(\Phi^{-1}(0'))}h|\xi'-\eta|^{2-2n}|\xi'|^{\alpha}d\xi'\ge C_1 h^{2-n} - C_2 h^{2-n+\alpha}
\end{eqnarray*}
where $\hat{c_1},\hat{c_2},C_1,C_2>0$ are constants depending on the a-priori data only and  $\xi=(\xi',\xi_n)$, $\xi=\Phi^{-1}(x),\ \eta=\Phi^{-1}(y)$.

We now consider the second term on the right hand side of \eqref{Ssigma}. We have that 
\begin{equation*}\
\left\{
\begin{array}
{lcl}
\Delta_{x}(R_1(\cdot,y))=0\ ,&& \mbox{in $B_{\rho}(O')\cap \Omega^-$ ,}
\\
|R_1(\cdot,y)||_{\partial B_{\rho}(O')\cap \Omega^-}\le C \rho^{2-n} \, 
\\
\partial_{\nu^-}R^-(\cdot, y) - \gamma^-(\cdot) R^-(\cdot, y)=0 \ , && \mbox{in $B_{\rho}(O')\cap \Sigma^-$ .}
\end{array}
\right.
\end{equation*}
Hence by the weak maximum principle we have that 
\begin{eqnarray}
|R_1(\cdot,y)|\le C  {\rho}^{2-n} \ \ \ \mbox{in} \ \  B_{\rho}(O')\cap \Omega^-,
\end{eqnarray}
where $C>0$  is a constant depending on the a priori data only.

 Then, by the asymptotic formulas \eqref{asin1}, \eqref{asin2}, we get
 
 \begin{eqnarray}
&&\left|\int_{(\Sigma_1\setminus\Sigma_2)\cap B_{\frac{\rho}{2}}(O')}R_1^+(x,y)\partial_{\nu(x)} R^+_2(x,y)d\sigma(x) \right|-\nonumber\\
&&- \int_{(\Sigma_1\setminus\Sigma_2)\cap B_{\rho}(O')}|R_1^-(x,y)\partial_{\nu(x)}R_2^-(x,y)|d\sigma(x)\ge \nonumber\\
&&\ge  C_1 h^{2-n} - C_2 h^{2-n+\alpha} - \int_{(\Sigma_1\setminus\Sigma_2)\cap B_{\frac{\rho}{2}}(O')} C_3|x-y|^{2-n}d_{l_1}^{1-n}d\sigma(x) -\nonumber	\\
&&-\int_{(\Sigma_1\setminus\Sigma_2)\cap B_{\frac{\rho}{2}}(O')}\frac{C_4}{{d_{l_1}}^{\alpha}}|x-y|^{2-n+\alpha}|x-y|^{1-n}d\sigma(x)-\nonumber\\
&& -\int_{(\Sigma_1\setminus\Sigma_2)\cap B_{\frac{\rho}{2}}(O')}\frac{C_5}{{d_{l_1}}^{\alpha}}|x-y|^{2-n+\alpha}d_{l_1}^{1-n}d\sigma(x) - \nonumber\\
&&- \int_{(\Sigma_1\setminus\Sigma_2)\cap B_{\rho}(O')} C_6{{d_{l_1}}}^{2-n}|x-y|^{1-n}d\sigma(x)\ .
\end{eqnarray}

After straightforward calculation we observe that up to choosing the constant $c_0$ (in the definition of $h$ and $\rho$) sufficiently small, we have that 
 \begin{eqnarray*}
&&\left|\int_{(\Sigma_1\setminus\Sigma_2)\cap B_{\frac{\rho}{2}}(O')}R_1^+(x,y)\partial_{\nu(x)} R^+_2(x,y)d\sigma(x)\right|-\nonumber\\
&&-\int_{(\Sigma_1\setminus\Sigma_2)\cap B_{\rho}(O')}|R_1^-(x,y)\partial_{\nu(x)}R_2^-(x,y)|d\sigma(x) \ge c_1 h^{2-n} \ \  \ \ \ \ \ \  \ \ \
\end{eqnarray*}
where $c_1>0$ is a constant depending on the a-priori data only.  

We estimate the third term on the right hand side of \eqref{Ssigma}. By the asymptotic estimate \eqref{asin1}, we have that 
\begin{eqnarray}
|{R_1}^{\pm}(\cdot, y)|\le C |x-y|^{2-n}\le C \left|d_{l_1}^p-h\right|^{2-n}\  \ \ \mbox{on}\ (\Sigma_1\setminus\Sigma_2)\setminus B_{\rho}(O')\nonumber
\end{eqnarray}
where $C>0$ is a constant depending on the a priori data only. 

Moreover, by Proposition \ref{holder}, we infer that, on $(\Sigma_1\setminus\Sigma_2)\setminus B_{\rho}(O')$
\begin{eqnarray}
|\partial_{\nu_1}R_2(x,y)|\le C\left(\mbox{dist}(x,\partial \Sigma_2)^{\widetilde{\alpha}-1}|x-y|^{2-n-\widetilde{\alpha}} + |x-y|^{1-n} \right) \nonumber \ .
\end{eqnarray}
Hence by the integrability of $\mbox{dist}(x,\partial \Sigma_2)$ over $\Sigma_1\setminus\Sigma_2$ we deduce that 
\begin{eqnarray}
\int_{(\Sigma_1\setminus\Sigma_2)\setminus B_{\rho}(O')}|[R_1(x,y)]_1\partial_{\nu(x)}R_2(x,y)|  d\sigma(x)\le C\left|d_{l_1}^p-h\right|^{3-2n}
\end{eqnarray}
where $C>0$ is a constant depending on the a priori data only.
Finally, by \eqref{asin1} and by the Robin boundary condition we get  
\begin{eqnarray}
&&\int_{\Sigma_1\setminus\Sigma_2}|R_2(x,y)[\partial_{\nu(x)}R_1(x,y)]_1|  d\sigma(x) + 
\int_{\Sigma_1\cap\Sigma_2}|[R_2(x,y)\partial_{\nu_1}R_1(x,y)]|  d\sigma(x)\le \nonumber\\ && \le C \left|d_{l_1}^p-h\right|^{4-2n}\nonumber \ .
\end{eqnarray}
Gathering together the above estimates we get
\begin{eqnarray}
|S_{\Sigma_1}(y,y)|\ge c_1 h^{2-n} - \widetilde{c_1}|d_{l_1}^p -h|^{3-2n}.
\end{eqnarray}

The  upper bound 
\begin{eqnarray}
|S_{\Sigma_2}(y,y)|\le \widetilde{c_2}|d_{l_1}^p -h|^{3-2n}.
\end{eqnarray}
follows along the same lines of the arguments above.

Combining the last two inequalities and \eqref{diff} we conclude the proof.

\end{proof}

\end{section}


\begin{thebibliography}{99}
\bibitem{Adams}R.A. Adams,
Sobolev Spaces, Academic Press, New York, 1975.
\bibitem{ADN}S. Agmon, A. Douglis, L. Nirenberg, Estimates near the boundary for solution of elliptic partial differential equations satisfying general boundary conditions I., Comm. Pure Appl. Math. 12 (1959), 623-727.
\bibitem{A-1}G. Alessandrini, Stable determination of conductivity by boundary measurements, Applicable Anal., 27 (1988), 153-172.
\bibitem{A0}G. Alessandrini, Singular solutions of elliptic equations and the determination of conductivity by boundary measurements, J. Differential Equations 84 (1990), 252-272.
\bibitem{A1}G. Alessandrini, Stable determination of a crack from boundary measurements, Proc. Royal Soc. Edinburgh, A 127 (1993), 497-516.
\bibitem{A2}G. Alessandrini, Stability for the crack determination problem, Inverse Problems in Mathematical Physics, L. P\"{a}iv\"{a}rinta E. Somersalo, Eds. Springer Berlin, 1993, 1-8.
 
\bibitem{ABRV}G. Alessandrini, E. Beretta, E. Rosset, S. Vessella, Optimal stability for Inverse Elliptic Boundary Value Problems with Unknown Boundaries, Ann. Sc. Norm. Super. Pisa - Scienze Fisiche e Matematiche - Serie IV. Vol. XXXIX. Fasc. 4 (2000).
\bibitem{ADV}G. Alessandrini, A. Diaz Valenzuela, Unique determination of multiple cracks by two measurements, SIAM J. on Control and Optimiz. 34 (1996), 913-921.  
\bibitem{ADiB} G. Alessandrini, E. DiBenedetto, Determining 2-dimensional cracks in 3-dimensional bodies: uniqueness and stability, Indiana Univ. Math. J. 46 , (1997), 1-82.
\bibitem{ADC}G. Alessandrini, M. Di Cristo, Stable determination of an inclusion by boundary measurements, SIAM J. Math. Anal. 37 (2005), pp. 200-217.
\bibitem{AGa1}G. Alessandrini, R. Gaburro, Determining conductivity with special anisotropy by boundary measurements, SIAM J. Math. Anal. 33 (2001), 1787-1810. 
\bibitem{AGa2}G. Alessandrini, R. Gaburro, The local Calder\'{o}n problem and the determination at the boundary of the conductivity, Comm. Partial Differential Equations 34 (2009), 918-936.
\bibitem{AKi}G. Alessandrini, K. Kim, Single-logarithm stability for the Calder\'{o}n problem with local data, 	preprint arXiv:1202.5485v1. To appear on J. Inverse and Ill-Posed Problems.
\bibitem{ARo}G. Alessandrini, L. Rondi, Stable determination of a crack in a planar inhomogeneneous conductor, SIAM J. Math. Anal. 30, (1998), 326-340.
\bibitem{AVe}G. Alessandrini, S. Vessella, Lipschitz stability for the inverse conductivity problem, Adv. Appl. Math 25 (2005), 200-217.
\bibitem{BV}K. Bryan, M. Vogelius, A uniqueness result concerning the identification of a collection of cracks from finitely many electrostatic boundary measurements, SIAM J. Math. Anal. 23 (1992), 950-958.
\bibitem{DC1}M. Di Cristo, Stable determination of an inhomogeneous inclusion by local boundary measurements, J. Comput. Appl. Math. 198 (2007), 414-425.
\bibitem{DC2}M. Di Cristo, Stability estimate in the inverse transmission scattering problem, Inverse Probl. Imaging 3, (2009), 551-565.
\bibitem{DCRo} M. Di Cristo, L.Rondi, Examples of exponential instability for inverse inclusion and scattering problems, Inverse Problems 19  (2003), 685–701.

\bibitem{E}M. Eller, Identification of cracks in three-dimensional bodies by many boundary measurements, Inverse Problems 12, (1996), 395-408. 
\bibitem{FV}A. Friedman, M. Vogelius, Determining cracks by boundary measurements, Indiana Univ. Math. J. 38 (1989), 527-556. 








\bibitem{gt}D. Gilbarg, N.S. Trudinger, Elliptic Partial Differential Equations of Second Order, Second edition, Springer-Verlag, Berlin, Heidelberg, New York, (1983).
\bibitem{Giraud}G. Giraud, Probl\`{e}mes de valeurs \`{a} la fronti\`{e}re relatifs \`{a} certains donn\'{e}es discontinues,  Bull. Soc. Math, de France, 61 (1933), 1�54.
\bibitem{Is0}V. Isakov, On uniqueness of recovery of a discontinuous conductivity coefficient, Comm. Pure Appl. Math., Vol. XLI, 865-877, (1988).
\bibitem{Is1}V. Isakov, Inverse problems for partial differential equations, Springer-Verlag, New York, 1998. 
\bibitem{KePi} C.E. Kenig, J. Pipher, The Neumann problem for elliptic equations with non-smooth coefficients, Invent. Math. 113, (1993), 447-509.
\bibitem{KSe}H. Kim, J.K. Seo, Unique determination of a collection of a finite number of cracks from two boundary measurements, SIAM J. Math. Anal. 27 (1996), 1336-1340.


\bibitem{LiMa}J.L. Lions, E. Magenes, Non-Homogeneous Boundary Value Problems and Applications, Vol.1, Springer-Verlag, 1972.

\bibitem{LSW}W. Littman, G. Stampacchia, H. Weinberger, Regular points for elliptic equations with discontinuous coefficients, Ann. Scuola Norm. Sup. Pisa 3, 17 (1963),  43-77. 
\bibitem{Mi}C. Miranda, Partial differential equations of elliptic type, Springer-Verlag, New York, 1970.
\bibitem{Ro1}L. Rondi, Uniqueness and stability for the determination of boundary defects by electrostatic measurements, Proc. Royal Soc. Edinburgh 130, (2000), 1119-1151.
\bibitem{Ro2}L. Rondi, Optimal stability estimates for the determination of defects by electrostatic measurements, Inverse Problems 15, (2001), 1193-1212.
\bibitem{Sa}M. Salo, Inverse problems for nonsmooth first order perturbation of the Laplacian. Dissertation, University of Helsinky, Helsinky, 2004. Ann. Acad. Sci. Fenn. Math. Diss. no. 139 (2004), 67 pp. 
\bibitem{Eva} E. Sincich, Stability and Reconstruction for the Determination of Boundary Terms by a Single Measurements, PhD Thesis, SISSA-ISAS, Trieste, 2005. hdl.handle.net/1963/1973
\bibitem{Si}E. Sincich, Stability for the determination of unknown boundary and impedance with a Robin boundary condition, SIAM J. Math. Anal. 42 (2010), no. 6,  2922-2943.
\bibitem{T}L.C. Tartar, An introduction to Sobolev spaces and interpolation spaces, Lecture Notes of the Unione Matematica Italiana, 3. Springer, Berlin; UMI, Bologna, 2007.
www.math.cmu.edu/cna/LectureNotesFiles/SOB+Int.pdf
\end{thebibliography}
\end{document}